\numberwithin{equation}{section}
\newcommand{\Zc}{\mathcal{Z}}
\newcommand{\Vc}{\mathcal{V}}
\newtheorem{theorem}{Theorem}[section]
\newtheorem{lemma}[theorem]{Lemma}
\newtheorem{corollary}[theorem]{Corollary}
\newtheorem{proposition}[theorem]{Proposition}
\theoremstyle{definition}
\theoremstyle{remark}
\newtheorem{remark}[theorem]{Remark}
\tikzset{commutative diagrams/.cd,
mysymbol/.style={start anchor=center,end anchor=center,draw=none}
}
\newcommand{\dofcnt}[1]{{\tiny\text{{#1}}}}
\newcommand{\R}{\mathbb{R}}
\newcommand{\V}{\mathbb{V}}
\newcommand{\M}{\mathbb{M}}
\newcommand{\SSS}{\mathbb{S}}
\newcommand{\K}{\mathbb{K}}
\newcommand{\veps}{\varepsilon}
\newcommand{\om}{{\Omega}}
\newcommand{\p}{{\partial}}
\newcommand{\mct}{\mathcal{T}_h}
\newcommand{\mcta}{\mct^\text{A}}
\newcommand{\dA}{} 
\newcommand{\ds}{} 
\newcommand{\dx}{} 
\renewcommand\div{\operatorname{div}}
\newcommand\tr{\operatorname{tr}}
\newcommand\sym{\operatorname{sym}}
\newcommand\ran{\operatorname{range}}
\newcommand{\trF}{{\ensuremath\mathop{\mathrm{tr}_{\scriptscriptstyle{F\,}}}}}
\newcommand{\devF}{{\ensuremath\mathop{\mathrm{dev}_{\scriptscriptstyle{F\,}}}}}
\newcommand{\curlF}{{\ensuremath\mathop{\mathrm{curl}_{\scriptscriptstyle{F\,}}}}}
\newcommand{\rotF}{{\ensuremath\mathop{\mathrm{rot}_{\scriptscriptstyle{F\,}}}}}
\newcommand{\gradF}{{\ensuremath\mathop{\mathrm{grad}_{\scriptscriptstyle{F\,}}}}}
\newcommand{\divF}{{\ensuremath\mathop{\mathrm{div}_{\scriptscriptstyle{\!F\,}}}}}
\newcommand{\curl}{{\ensuremath\mathop{\mathrm{curl}\,}}}
\newcommand{\grad}{{\ensuremath\mathop{\mathrm{ grad}\,}}}
\newcommand{\dive}{{\ensuremath\mathop{\mathrm{div}\,}}} 
\newcommand{\Div}{{\rm div}\,}
\newcommand{\pol}{\EuScript{P}}
\newcommand{\mpol}{\EuScript{M}}
\newcommand{\Pih}[2]{{\Pi_{{#1}, h}^{#2}}}
\newcommand{\Pith}[2]{{\tilde{\Pi}_{{#1}, h}^{#2}}}
\newcommand{\Ta}{T^{\text{A}}}
\newcommand{\Tha}{\mathcal{T}_h^{\text{A}}}
\newcommand{\Th}{\mathcal{T}_h}
\newcommand{\Fz}{F^{z}}
\newcommand{\mV}{\mathring{V}}
\newcommand{\mS}{\mathring{S}}
\newcommand{\mL}{\mathring{L}}
\newcommand{\mW}{\mathring{W}}
\newcommand{\mZ}{\mathring{Z}}
\newcommand{\tZ}{\tilde{Z}}
\newcommand{\mU}{\mathring{U}}
\newcommand{\Wo}{\mathring{W}}
\newcommand{\Vo}{\mathring{V}}
\newcommand{\Lo}{\mathring{L}}
\newcommand{\Wdr}[1]{W^{\partial, {#1}}_r}
\newcommand{\Ldr}[1]{L^{\partial, {#1}}_r}
\newcommand\vskw{\operatorname{vskw}}
\newcommand\mskw{\operatorname{mskw}}
\newcommand\skw{\operatorname{skw}}
\newcommand\deff{\operatorname{def}}
\newcommand\inc{\operatorname{inc}}
\newcommand{\revj}[1]{{\color{black}#1}}
\newcommand{\revjj}[1]{{\color{black}#1}}
\newcommand{\RM}{\mathcal{R}}
\newcommand{\FF}{{\scriptscriptstyle{F\!F}}}
\newcommand{\F}{{\scriptscriptstyle{F}}}
\newcommand{\nF}{{n\F}}
\newcommand{\Fn}{{\F n}}
\title[]{A discrete elasticity complex \\
  on three-dimensional Alfeld splits}
\author[]{Snorre H. Christiansen}
\address{Department of Mathematics,
University of Oslo,
P.O. Box 1053 Blindern,
NO-0316 Oslo, Norway}
\email{snorrec@math.uio.no}
\author[]{Jay Gopalakrishnan}
\address{Portland State University (MTH),
  PO Box 751, Portland, OR 97207, USA}
\email{gjay@pdx.edu}
\author[]{Johnny Guzm\'an}
\address{Division of Applied Mathematics,
Brown University,
Box F,
182 George Street,
Providence, RI 02912, USA}
\email{johnny\_guzman@brown.edu}
\author[]{Kaibo Hu}
\address{School of Mathematics, University of Minnesota,
 206 Church St. SE,
Minneapolis, MN 55455-0488,
USA}
\email{khu@umn.edu}
\thanks{\revjj{This work was supported in part by the National Science
    Foundation (USA) under grants DMS~1913083 and DMS~1912779.}
    The first and last two authors would like to thank the Isaac Newton Institute for Mathematical Sciences for support and hospitality during the programme Geometry, compatibility and structure preservation in computational differential equations (EPSRC grant number EP/R014604/1).}
\keywords{\revjj{Kr\"oner complex, Calabi complex, stress finite element, curvature, incompatibility}}
\begin{document}

\maketitle

\begin{abstract}

  We construct conforming finite element elasticity complexes on the
  Alfeld splits of tetrahedra. The complex consists of vector fields
  and symmetric tensor \revjj{fields, interlinked via the linearized
    deformation operator}, the linearized curvature operator, and the
  divergence operator, respectively. The construction is based on an
  algebraic machinery that derives the elasticity complex from de~Rham
  complexes, and smoother finite element differential forms.
  
\end{abstract}

\section{Introduction}

Differential complexes have become a powerful tool in the construction
and analysis of numerical methods in the framework of finite element
exterior calculus~\cite{arnold2018finite,arnold2006finite}. The
example of the de Rham complex together with its various finite
element applications, especially in computational electromagnetism, is
now very well known. The elasticity complex is another example with
important applications in continuum mechanics and geometry.

In three space dimensions (3D), the elasticity complex reads as follows.
\begin{equation}\label{continuous-elasticity}
\begin{tikzcd}
0 \arrow{r} &\RM \arrow{r}{\subset} &C^{\infty}\otimes \V \arrow{r}{\deff} & C^{\infty}\otimes \mathbb{S}  \arrow{r}{\inc} & C^{\infty}\otimes \mathbb{S}\arrow{r}{\div} &C^{\infty}\otimes {\V}  \arrow{r}&0,
 \end{tikzcd}
\end{equation}
where $\V = \R^3$, $\RM=\{ a+b \times x: a, b \in \mathbb{R}^3\}$,
$\mathbb{S}$ denotes the set of symmetric $3 \times 3$ matrices,
$\deff$ denotes 
the deformation operator equaling $\sym\grad$
which we shall also often write simply
as $\veps$ (also known as the linearized strain), 
$\inc =\curl \circ \mathrm{T}\circ \curl$ gives the incompatibility
operator, which in 3D is equivalent to the linearized Einstein tensor
or the linearized Riemannian curvature. Here $\curl$ and $\div$ denote the curl and divergence operators
 applied row by row on a matrix field, respectively.  The notation
$\mathrm{T}$ in the definition of $\inc$ denotes the operation that
maps a matrix to its transpose, which we also often denote simply by
$'$ (prime).  In mechanics, \eqref{continuous-elasticity} bears the
name of the {\it Kr\"oner complex} \cite{vangoethem2010kroener,
  Krone60}, due to Kr\"oner's pioneering work on modeling defects of
the continuum by the violation of Saint-Venant's compatibility
condition, $\inc\circ\deff=0$. In the context of elasticity, the
spaces after $\RM$ in \eqref{continuous-elasticity} correspond to the
displacement, strain, stress (incompatibility), and the load,
respectively.  In geometry, the sequence \eqref{continuous-elasticity}
is referred to as the linearized {\it Calabi complex}
\cite{angoshtari2015differential,calabi1961compact} and the spaces
correspond to the embedding, the metric, and the curvature,
respectively.

The complex~\eqref{continuous-elasticity}, and its Sobolev space
version (see~\cite{arnold2020complexes}),
\begin{equation}\label{sobolev-elasticity}
\begin{tikzcd}[column sep=0.85cm]
0 \arrow{r} &\RM \arrow{r}{\subset} &\revjj{H^{2}}\otimes {\V} \arrow{r}{\veps} & H(\inc, \mathbb{S})  \arrow{r}{\inc} & H(\div, \mathbb{S})\arrow{r}{\div} &L^{2}\otimes {\V}  \arrow{r}&0,
 \end{tikzcd}
\end{equation}
where
$H(\inc, \mathbb{S}):=\{u\in H^{1}\otimes \mathbb{S}: \; \inc u \in
L^{2}\otimes \mathbb{S} \}$ and
$H(\div, \mathbb{S}):=\{u\in L^{2}\otimes \mathbb{S}: \; \div u \in
L^{2}\otimes {\V} \}$, are also relevant to variational principles in
elasticity such as the Hellinger-Reissner principle. Therefore a
discrete version of \eqref{continuous-elasticity} should be useful to
understand the behavior of structure-preserving numerical methods.  In
this paper, we shall construct a discrete finite element subcomplex
of~\eqref{sobolev-elasticity}. To the best of our knowledge, this is
the first known finite element subcomplex of the elasticity complex, 
complete with conforming subspaces of all the spaces in the sequence
and accompanying cochain projectors.

The Hellinger-Reissner principle involves the last two spaces, i.e.,
$H(\div, \mathbb{S})$ and $L^{2}\otimes {\V}$, in
\eqref{sobolev-elasticity}.  The symmetry of the tensors makes it a
challenging problem to construct conforming finite element
discretization for these spaces. In two space dimensions (2D), Johnson and
Mercier \cite{johnson1978some} constructed a stable finite element
elasticity pair on the Clough-Tocher
split. 
Later, Arnold and Winther \cite{arnold2002mixed} constructed the first
finite element elasticity pair on triangular meshes with polynomial
shape functions. This work was extended to 3D
in~\cite{arnold2008finite} and further refined to
reduce the number of degrees of freedom (dofs)
in~\cite{hu2015family}.

Despite the above-mentioned significant progress in the construction
of finite elements for the last (stress-displacement) part of the
elasticity complex \eqref{sobolev-elasticity}, the question of how to
construct an entire finite element subcomplex
of~\eqref{sobolev-elasticity} seems to have been left largely
unanswered yet.  The question is entirely natural from the viewpoint
of completing the mathematical structure.  Besides satisfying a
mathematical curiosity, there are many other utilitarian and numerical
reasons to tackle the question of constructing a discrete subcomplex.
For example, a discrete complex contains an explicit characterization
of the kernel of differential operators. This is crucial in designing
robust solvers and
preconditioners~\cite{farrell2020reynolds} in the framework of
kernel-capturing subspace correction methods
\cite{lee2007robust,schoeberlthesis}. 
Another reason is that the elasticity complex
\eqref{continuous-elasticity} is not only important for elasticity,
but also for various applications where other parts of the complex are
involved, e.g., the intrinsic elasticity \cite{ciarlet2009intrinsic}
(involving compatible strain tensors), continuum modeling of defects
\cite{amstutz2019incompatibility} (involving the $\inc$ operator), and
relativity \cite{christiansen2011linearization,li2018regge} (involving
the metric and curvature). One needs no stretch of imagination to see
progress in these areas being enabled by a discrete version of
\eqref{continuous-elasticity}. Having said that, let us also note that
this paper does not give any numerical method; the paper's sole focus
is to reveal a mathematical structure analogous to
\eqref{continuous-elasticity} inherent in certain discrete spaces.

Specifically, we construct conforming finite element spaces that form
a subcomplex of \eqref{sobolev-elasticity}, with accompanying
cochain projectors (defined on smoother subspaces), also referred to
as ``commuting projections.''  We use the Bernstein-Gelfand-Gelfand
(BGG) construction \cite{arnold2006finite,arnold2020complexes} as a
tool to guide our construction. The BGG construction is an algebraic
machinery that originated in Lie theory of geometry
\cite{vcap2001bernstein}. Later, it was introduced into numerical
analysis as a way to derive differential complexes, such as the
elasticity complex, from de Rham sequences
\cite{arnold2006defferential,arnold2020complexes,eastwood2000complex}.
The idea of the BGG construction (see \eqref{eq:5} below) is to derive
the elasticity complex from two copies of vector-valued de Rham
complexes.  To match the two complexes diagonally, the spaces of the
same form degree in the two complexes should have different
regularity. This was already noted by Arnold, Falk and Winther in
their use of the BGG construction applied to the Hellinger-Reissner
principle to derive finite element methods with weakly imposed
symmetry~\cite{arnold2007mixed}.  To match the two de Rham sequences,
they chose finite element spaces that satisfy certain algebraic
conditions. When the degrees of certain spaces in the two sequences
match exactly, one can see that the scheme with weakly imposed
symmetry actually leads to strong symmetry.  This was first observed
in~\cite{gopalakrishnan2012second} where a provably stable set of
spaces for a method imposing weak symmetry was shown to yield exactly
symmetric stress approximations, by establishing connections between
Stokes and elasticity systems, which can now be understood from
the BGG viewpoint.

The BGG machinery was used to reinterpret the 2D Arnold-Winther
element in \cite{arnold2006defferential}. Another elasticity pair by
Hu and Zhang \cite{hu2014family} was also explained in this way in
\cite{christiansen2018nodal}, where the two de Rham sequences start
with the Argyris and the Hermite elements, respectively.  In 2D, there
is another elasticity strain complex connecting the displacement,
strain (metric) and incompatibility (curvature).  Using the BGG
diagrams, Christiansen and Hu \cite{christiansen2019finite}
constructed conforming discrete strain complexes with applications in
discrete geometry.
These works helped put the pieces of the puzzle  into place in
2D. Yet, several challenges remained to get to the 3D elasticity
complex, due to the complexity of the differential structures in
\eqref{sobolev-elasticity} and the difficulties in constructing smooth
3D discrete de Rham sequences.  Thanks to recent progress on smooth
finite element de Rham complexes \cite{fu2018exact,guzman2018inf}, a
way out of the impasse finally emerged, 
at least on meshes of Alfeld splits of tetrahedra~\cite{Alfel84}.
In this paper, we are thus finally able to
construct a discrete elasticity complex on meshes 
of Alfeld splits.

Some parallel tracks of investigation by other groups of authors are
related and interesting.  Approaches to a discrete elasticity complex
from a discrete geometric perspective can be found
in~\cite{christiansen2011linearization,hauret2007diamond,li2018regge}.
The Regge calculus was originally proposed by Regge
\cite{regge1961general} and has several applications in quantum and
numerical gravity. Due to its very weak continuity, establishing
convergence might need further innovations.  Christiansen
\cite{christiansen2011linearization} put the Regge calculus into a
finite element context and fitted it into a discrete elasticity
complex and Regge interpolation was used for shells recently in
\cite{NeuntSchob20}.  From this perspective, one of the results in
this paper can be seen as providing a smoother analogue of the Regge
elements, with $H(\inc)$-conformity (and additionally $C^0$
continuity).  Our smoother spaces, while mathematically pleasing, do
come at the price of increased number of dofs, so we emphasize again
that this paper's goal is not to construct competitive numerical
methods, but rather to reveal previously unknown mathematical
structures.

The rest of the paper is organized as follows. In
Section~\ref{sec:preliminary}, we quickly present the essentials for
the remainder of the paper, including results on spaces on Alfeld
splits and the BGG resolution.  In Sections~\ref{sec:first} and
\ref{sec:second}, we present the two finite element de Rham complexes
that will be used in the BGG construction. Section~\ref{sec:local},
the centerpiece of this paper, presents finite elements on Alfeld
splits for each member of the discrete elasticity complex.  Section
\ref{sec:global} remarks on how the corresponding global finite
element spaces may be constructed. A standalone appendix
(Appendix~\ref{sec:supersmoothness}) gives an elementary argument for
establishing supersmoothness results on three-dimensional Alfeld
splits.

\section{Preliminaries}\label{sec:preliminary}


To build an elasticity complex we shall employ  two de~Rham complexes of
discrete
spaces with extra smoothness (in comparison with the standard finite
element spaces). We shall
construct these spaces in the next two sections using the results
of~\cite{fu2018exact}, which we recall in this section.

We  work on Alfeld simplicial complexes and start by establishing
notation associated to an Alfeld split.
Starting with a tetrahedron $T=[x_0, \cdots, x_3]$,
let $\Ta$ be an Alfeld triangulation of $T$, i.e.,
we choose an interior point $z$ of $T$ and
we let $T_0=[z, x_1, x_2, x_3]$,  $T_1=[z, x_0, x_2, x_3],$
$T_2=[z, x_0, x_1, x_3],$
$T_3=[z, x_0, x_1, x_2]$
and set $\Ta= \{ T_0, T_1, T_2, T_3\} $. \textcolor{black}{Let $\Delta_{i}(T)$ be the set of all $i$-dimensional subsimplexes of $T$.}

The following spaces are well-known finite element spaces:
\begin{alignat*}{1}
W_{r}^0(\Ta) &=  \{\omega \in H^{1}(T): \omega |_K \in \pol_r(K)  \; \text{ for all } K \in \Ta \},  \\
W_{r}^1(\Ta) &=\{ \omega \in H(\curl\!, T): \omega |_K \in [\pol_r(K)]^3   \; \text{ for all } K \in \Ta  \}, \\
W_{r}^2(\Ta) &= \{ \omega \in H(\dive\!, T): \omega |_K \in [\pol_r(K)]^3   \; \text{ for all } K \in \Ta  \}, \\
W_r^3(\Ta) & =  \{ \omega \in L^2(T): \omega |_K \in \pol_r(K)   \; \text{ for all } K \in \Ta  \} .
\end{alignat*}
Their analogues with  boundary conditions are
\begin{alignat*}{1}
\mathring{W}_{r}^0(\Ta) &=   \{\omega \in W_{r}^0(\Ta) : \omega =0  \text{ on  }\partial T  \},\\
\mathring{W}_{r}^1(\Ta) &=\{ \omega \in W_{r}^1(\Ta): \omega \times n=0 \text{ on  }\partial T \},\\
\mathring{W}_{r}^2(\Ta) &= \{ \omega \in W_{r}^2(\Ta):   \omega \cdot n=0 \text{ on  }\partial T    \}, \\
\mathring{W}_r^3(\Ta) & =  \{ \omega  \in W_{r}^3(\Ta): \int_T \omega=0 \}. 
\end{alignat*}
We define the following Lagrange spaces 
\begin{equation*}
  L_{r}^0(\Ta) =W_r^0(\Ta),
  \; L_{r}^1(\Ta) =[W_r^0(\Ta)]^3, \;
  L_{r}^2(\Ta) =L_r^1(\Ta),
  \;
  L_r^3(\Ta) = L_{r}^0(\Ta),
\end{equation*}
and their analogues  with boundary conditions:
\begin{equation*}
  \mathring{L}_{r}^0(\Ta) = \mathring{W}_r^0(\Ta),
  \;\mathring{L}_{r}^1(\Ta) =[\mathring{W}_r^0(\Ta)]^3,
  \;
  \mathring{L}_{r}^2(\Ta) = \mathring{L}_{r}^1(\Ta),\;
  \mathring{L}_{r}^3(\Ta) = L_r^3(\Ta) \cap \mathring{W}_r^3(\Ta).
\end{equation*}

Apart from the above standard spaces, we also need
the following ``smoother'' spaces:
\begin{alignat*}{3}
S_{r}^0(\Ta) &= \{\omega \in  L_r^0(\Ta): \grad \omega \in L_{r-1}^1(\Ta)  \},  \quad && \mathring{S}_{r}^0(\Ta) && = \{\omega \in  \mathring{L}_r^0(\Ta): \grad \omega \in \mathring{L}_{r-1}^1(\Ta)  \},\\
S_{r}^1(\Ta) &=\{ \omega \in L_r^1(\Ta): \curl \omega \in L_{r-1}^2(\Ta)  \}, \quad && \mathring{S}_{r}^1(\Ta) &&=\{ \omega \in \mathring{L}_r^1(\Ta): \curl \omega \in \mathring{L}_{r-1}^2(\Ta)  \}, \\
S_{r}^2(\Ta) &= \{ \omega \in L_r^2(\Ta): \dive \omega \in L_{r-1}^3(\Ta)  \}, \quad && \mathring{S}_{r}^2(\Ta) &&= \{ \omega \in \mathring{L}_r^2(\Ta): \dive \omega \in \mathring{L}_{r-1}^3(\Ta)  \}, \\
S_r^3(\Ta) & = L_r^3(\Ta),   \quad  &&\mathring{S}_r^3(\Ta) && = \mathring{L}_r^3(\Ta).
\end{alignat*}
\revjj{When $r \le 3$, the space $S_r^0(\Ta)$ coincides with
  $\pol_r(T)$. More generally, the} $S$-spaces have ``extra''
smoothness at the vertices as given in the next proposition.

\begin{proposition} \hfill
  \label{prop:extraC}
  \begin{enumerate}
  \item \label{item:0_prop:extraC}  Every function in $S_r^0(\Ta)$ is $C^2$ at the vertices of
    $T$.
  \item \label{item:1_prop:extraC} Every function in
    $\mathring{S}_r^0(\Ta)$ has vanishing second derivatives at the
    vertices of $T$.

  \item \label{item:2_prop:extraC} Every function in $S_r^1(\Ta)$ is $C^1$ at the vertices of
    $T$.

  \item \label{item:3_prop:extraC} Every function in
    $\mathring{S}_r^1(\Ta)$ has vanishing first derivatives at the
    vertices of $T$.

  \end{enumerate}
\end{proposition}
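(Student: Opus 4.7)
My plan rests on three ingredients: (i) each restriction $\omega|_{K}$ is a polynomial of degree $r$, (ii) the prescribed continuity across internal faces (of $\grad\omega$ in parts (1)--(2), of $\curl\omega$ in (3)--(4)), and (iii) the geometric feature of an Alfeld split that the three interior faces meeting at any vertex $v$ of $T$ all contain the common edge $[v,z]$ from $v$ to the split point $z$. The main obstacle will be the algebraic step that converts the three local jump relations at $v$ into a rigid linear system with only the trivial solution; this is where the Alfeld geometry enters decisively.

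For part \ref{item:0_prop:extraC}, I would fix a vertex $v$ of $T$, enumerate the three subtetrahedra of $\Ta$ incident to $v$ as $K_{1},K_{2},K_{3}$, and denote by $F_{jk}$ the interior face shared by $K_{j}$ and $K_{k}$, by $\ell_{jk}$ a linear polynomial vanishing on the plane of $F_{jk}$, and by $\omega_{j}=\omega|_{K_{j}}$. Continuity of $\omega$ gives $\omega_{j}-\omega_{k}=\ell_{jk}q_{jk}$; feeding this into continuity of $\grad\omega$ across $F_{jk}$ and using $\ell_{jk}|_{F_{jk}}=0$ forces $q_{jk}|_{F_{jk}}=0$, so $q_{jk}=\ell_{jk}s_{jk}$ and $\omega_{j}-\omega_{k}=\ell_{jk}^{2}s_{jk}$. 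Differentiating twice and evaluating at $v$ then yields
\[
 D^{2}\omega_{j}(v)-D^{2}\omega_{k}(v)=2\,s_{jk}(v)\,\nabla\ell_{jk}\otimes\nabla\ell_{jk}.
\]
The cocycle identity among the three differences is a linear relation among three symmetric rank-one matrices. Since each $F_{jk}$ contains $[v,z]$, the normals $\nabla\ell_{jk}$ lie in the 2D subspace perpendicular to $z-v$ and are pairwise nonparallel; any three such outer products are linearly independent in the 3D space of symmetric bilinear forms on this subspace, so $s_{jk}(v)=0$ and consequently $D^{2}\omega_{1}(v)=D^{2}\omega_{2}(v)=D^{2}\omega_{3}(v)$.

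Part \ref{item:2_prop:extraC} would follow the same template with $\grad$ replaced by $\curl$: starting from $\omega_{j}-\omega_{k}=\ell_{jk}\bld{q}_{jk}$ and using the identity $\curl(\ell\bld{q})=\nabla\ell\times\bld{q}+\ell\,\curl\bld{q}$, continuity of $\curl\omega$ across $F_{jk}$ forces $\bld{q}_{jk}|_{F_{jk}}$ to be parallel to $\nabla\ell_{jk}$; hence $\nabla\omega_{j}(v)-\nabla\omega_{k}(v)$ is again a scalar multiple of $\nabla\ell_{jk}\otimes\nabla\ell_{jk}$, and the same 2D linear-independence argument closes this step.

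For the homogeneous versions I would superpose the boundary condition on (1) and (3). In part \ref{item:1_prop:extraC}, $\omega$ vanishes on the three boundary faces of $T$ meeting at $v$, so the common Hessian $H$ from (1) vanishes as a bilinear form on three planes whose normals are linearly independent; expressing $\R^{3}$ in the basis of the three edge-directions from $v$ then forces $H=0$. In part \ref{item:3_prop:extraC}, the condition $\omega\times n=0$ on $\partial T$ forces $\omega$ to be parallel to each boundary-face normal along that face, so $\omega$ vanishes on each boundary edge of $T$ emanating from $v$ (such an edge lies on two boundary faces with independent normals); differentiating along the basis of those three edge-directions then gives $\nabla\omega(v)=0$.
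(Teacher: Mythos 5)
Your proof is correct, but it takes a genuinely different route from the paper's. The paper (Appendix~\ref{sec:supersmoothness}) proves the vector-field statement first: Theorem~\ref{thm:supersmooth} shows that a piecewise-smooth, continuous $v$ with $\curl v$ continuous at the vertices is $C^1$ there, using (i) continuity of tangential derivatives across interior faces, (ii) a transposition identity $\tau\cdot(\grad v_i)c=\tau\cdot(\grad v_j)c$ at the vertex obtained by passing through the symmetric part and using the vertex-continuity of $\curl v$, and (iii) a resolution of all entries of $\grad v$ by choosing the tangent directions $\tau_0=(x_i-z)/\|x_i-z\|$ and the edge directions $\tau_{ij}$; the scalar $C^2$ statement is then Corollary~\ref{cor:supersmoothness}, obtained by applying the theorem to $v=\grad w$. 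You instead prove the scalar case directly by the classical smooth-cofactor factorization $\omega_j-\omega_k=\ell_{jk}^2 s_{jk}$ across interior faces, reduce the Hessian jumps to rank-one matrices $s_{jk}(v)\,\nabla\ell_{jk}\otimes\nabla\ell_{jk}$, and exploit the fact that the three interior-face normals at a vertex lie in the two-dimensional subspace $(z-v)^\perp$ and are pairwise nonparallel, so the three outer products are linearly independent in the three-dimensional space of symmetric forms on that subspace; the vector case is handled by the same template, with the continuity of $\curl\omega$ forcing $q_{jk}|_{F_{jk}}\parallel\nabla\ell_{jk}$ and hence a rank-one gradient jump. Both arguments hinge on the same Alfeld geometry (the interior faces at a vertex all contain the edge $[v,z]$), but your version makes it appear as a linear-independence statement for rank-one outer products, while the paper's appears as the existence of a common tangent direction $\tau_0$; your version also keeps the scalar and vector cases independent, whereas the paper derives the scalar case from the vector one. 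The paper's argument has the advantage of working for piecewise $C^\infty$ functions without invoking polynomial divisibility, though for the spaces in the proposition this makes no difference. One harmless slip: in part~\eqref{item:3_prop:extraC} you invoke the boundary condition $\omega\times n=0$ on $\partial T$, which is the condition defining $\mathring{W}_r^1(\Ta)$; the space $\mathring{S}_r^1(\Ta)$ actually sits inside $\mathring{L}_r^1(\Ta)=[\mathring{W}_r^0(\Ta)]^3$, so all components of $\omega$ vanish on $\partial T$, which is stronger and makes the vanishing on boundary edges immediate — your argument goes through a fortiori.
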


The first two items follow from~\cite{Alfel84} and the remainder can
be proved using a dimension counting argument found
in~\cite{fu2018exact}. Nonetheless, all the statements of the
proposition follow from elementary arguments (without counting
dimensions) detailed in Appendix~\ref{sec:supersmoothness}.

Consider the following sequences:
\begin{subequations} \label{exact}
  \begin{eqnarray}
    \label{exact1}
    &&
      \begin{tikzcd}[column sep=2em]        
        \mathbb{R} \arrow{r}
        &
        W_{r}^0(\Ta)
        \arrow{r}{\grad}
        & 
        W_{r-1}^1(\Ta)
        \arrow{r}{\curl}
        &
        W_{r-2}^2(\Ta)
        \arrow{r}{\dive}
        &
        W_{r-3}^3(\Ta)
        \arrow{r}
        & 0        
      \end{tikzcd}
    \\
    \label{exact2} 
    &&
      \begin{tikzcd}[column sep=2.15em]        
        \mathbb{R} \arrow{r}
        &
        S_r^0(\Ta)
        \arrow{r}{\grad}
        & 
        L_{r-1}^1(\Ta)
        \arrow{r}{\curl}
        &
        W_{r-2}^2(\Ta)
        \arrow{r}{\dive}
        &
        W_{r-3}^3(\Ta)
        \arrow{r}
        & 0        
      \end{tikzcd}
    \\
    \label{exact3} 
    &&
       \begin{tikzcd}[column sep=2.2em]                
         \mathbb{R} \arrow{r}
         &
         S_r^0(\Ta)
         \arrow{r}{\grad}
         & 
         S_{r-1}^1(\Ta)
         \arrow{r}{\curl}
         &
         L_{r-2}^2(\Ta)
         \arrow{r}{\dive}
         &
         W_{r-3}^3(\Ta)
         \arrow{r}
         & 0        
       \end{tikzcd}
    \\
    \label{exact4} 
    &&
       \begin{tikzcd}[column sep=2.26em]               
         \mathbb{R} \arrow{r}
         &
         S_r^0(\Ta)
         \arrow{r}{\grad}
         & 
         S_{r-1}^1(\Ta)
         \arrow{r}{\curl}
         &
         S_{r-2}^2(\Ta)
         \arrow{r}{\dive}
         &
         S_{r-3}^3(\Ta)
         \arrow{r}
         & 0.        
       \end{tikzcd}
  \end{eqnarray}
\end{subequations}
The first sequence is well known to be exact.  The last three were
shown to be exact in \cite{fu2018exact} \revjj{for $r\ge 1$} and so were
 the corresponding
 sequences with boundary conditions (see for example (4.2) in
 \cite{fu2018exact} for the case with boundary conditions).
\revjj{Throughout, when a subscript indicating the degree is negative, the
space is considered $0$-dimensional.}
 In fact, in the course of proving the exactness,
 the following representation of potentials
 was
established in \cite[proof of Theorem~3.1]{fu2018exact}.  Here and
throughout, we let $\mu \in C^{0}(T)$ denote the piecewise linear function
on $\Ta$ such that $\mu(z)=1$ and $\mu(x_i)=0$ for $0 \le i \le 3$
(i.e., $\mu$ is a bubble function on $\Ta$).

\begin{proposition}
  \label{prop:rep-div0}
  Let $r \ge 0$.
  For any $w \in \Wo_r^2(\Ta)$ with $\dive w = 0$, there exists
  $\gamma_j \in \pol_j(T)^3,~j=0, 1, \cdots, r$, such that
  \begin{equation}
    \label{eq:1}
    u = \mu \sum_{\ell =0 }^r \mu^\ell \gamma_{r- \ell}    
  \end{equation}
  satisfies $\curl u = w$.  Similarly,  any
  $w \in \Wo_r^3(\Ta)$ also has (possibly different)
  $\gamma_j \in \pol_j(T)^3$,
  which when combined to make 
  the function $u$ as in~\eqref{eq:1}, 
  satisfies $\dive u = w$.
\end{proposition}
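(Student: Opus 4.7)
The plan is to proceed by induction on the polynomial degree $r$; I outline the argument for the $\curl$ statement, since the $\dive$ statement is analogous with $\curl$ replaced by $\dive$ and matching degree shifts.

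For the base case $r = 0$, let $w \in \mathring{W}_0^2(\Ta)$ with $\dive w = 0$, and consider the linear map $\R^3 \ni \gamma_0 \mapsto \curl(\mu \gamma_0) \in \mathring{W}_0^2(\Ta)$. On each sub-tetrahedron $\curl(\mu \gamma_0)|_{T_k} = \nu_k \times \gamma_0$ with $\nu_k := \grad \mu|_{T_k}$. Because the vectors $\nu_0, \nu_1, \nu_2, \nu_3$ span $\R^3$ (they are nonzero scalar multiples of the four inward face normals of $T$), the map is injective. A dimension count using exactness of the boundary-condition analogue of \eqref{exact1} (with the convention that spaces of negative degree vanish) yields $\dim \ker(\dive|_{\mathring{W}_0^2}) = 3$, so the map is a bijection onto the divergence-free subspace and produces the required $\gamma_0$.

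For the inductive step, assume the result at level $r - 1$ and let $w \in \mathring{W}_r^2(\Ta)$ with $\dive w = 0$. The strategy is to ``peel off'' the lowest power of $\mu$ in the sought expansion by constructing $\gamma_r \in \pol_r(T)^3$ so that $w^\ast := w - \curl(\mu \gamma_r)$, which automatically lies in $\mathring{W}_r^2(\Ta)$ with $\dive w^\ast = 0$, is divisible by $\mu$ on each sub-tet. Since $w^\ast|_{T_k}$ is a polynomial of degree $r$ and divisibility by $\mu_k := \mu|_{T_k}$ is equivalent to the vanishing of $w^\ast$ on the opposite face $F_k$ of $T_k$ (which is a boundary face of $T$), the requirement reads $w|_{F_k} = \nu_k \times \gamma_r|_{F_k}$. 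Because $w|_{F_k}$ is tangential to $F_k$ (by the condition $w \cdot n = 0$ on $\partial T$) and $\nu_k \perp F_k$, this equation uniquely determines the tangential part of $\gamma_r|_{F_k}$ as a polynomial field of degree $r$. Producing a global $\gamma_r$ realizing these tangential traces reduces to a polynomial trace-extension problem on the four faces of $T$; the required edge-compatibility of the prescribed data can be verified using the structure of $w$ on the Alfeld split together with the freedom in the normal component of $\gamma_r$. Once $\gamma_r$ is fixed, $w^\ast = \mu \hat w$ piecewise, and the identity $\grad \mu \cdot \hat w + \mu \dive \hat w = 0$ extracted from $\dive w^\ast = 0$ allows the inductive hypothesis to be invoked on an associated divergence-free field of degree $r - 1$ built from $\hat w$. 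This yields $\tilde u = \mu \sum_{\ell = 0}^{r-1} \mu^\ell \gamma'_{r-1-\ell}$ whose curl matches $\hat w$ up to the algebraic transformation, and $u := \mu \gamma_r + \mu \tilde u$ then assembles into the claimed representation with $\curl u = w$.

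The main obstacle I foresee is the division-by-$\mu$ step: the natural candidate $\hat w$ obtained by factoring $\mu$ out of $w^\ast$ is not divergence-free on the nose, so passing to a genuinely inductive field requires a careful algebraic rearrangement, and the $H(\dive)$-conformity of $\hat w$ must be checked from the conformity of $w^\ast$. A secondary subtlety is the edge-compatibility of the tangential trace data of $\gamma_r$ along each edge of $T$ shared by two boundary faces. Both issues are ultimately resolved by exploiting the specific geometry of the Alfeld split and the unused degrees of freedom in $\gamma_r$. The divergence half of the proposition is proved by essentially the same template, with $\curl$ replaced by $\dive$, tangential trace conditions replaced by scalar ones, and \eqref{exact1}'s role played by the next sequence down.
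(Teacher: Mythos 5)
First, a point of reference: the paper does not prove Proposition~\ref{prop:rep-div0} at all --- it is quoted from the proof of Theorem~3.1 of \cite{fu2018exact} --- so there is no in-paper argument to compare against, and your proposal must stand on its own. Its overall shape (peel off the lowest power of the Alfeld bubble $\mu$, determine $\gamma_r$ from the trace of $w$ on $\partial T$, recurse) is the right one, and the base case $r=0$ is correct as written. The inductive step, however, has a genuine gap exactly where the real work lies. After setting $w^\ast = w - \curl(\mu\gamma_r) = \mu\hat w$, the field $\hat w$ is neither divergence-free (one only gets $\grad\mu\cdot\hat w + \mu\,\dive\hat w = 0$) nor in $\Wo_{r-1}^2(\Ta)$ (its normal trace on $\partial T$ need not vanish), so the induction hypothesis cannot be applied to it. Worse, the recursion does not close in the form you set it up: writing $u = \mu\gamma_r + \mu^2 v'$ with $v' = \sum_{\ell=0}^{r-1}\mu^\ell\gamma_{r-1-\ell}$, one has $\curl(\mu^{2}v') = \mu\,(2\grad\mu\times v' + \mu\curl v')$, so the equation to be solved for $v'$ is $2\grad\mu\times v' + \mu\curl v' = \hat w$, which is \emph{not} the equation $\curl(\mu v') = \hat w$ covered by the induction hypothesis; the coefficient in $\curl(\mu^{\ell+1}\gamma) = (\ell+1)\mu^\ell\grad\mu\times\gamma + \mu^{\ell+1}\curl\gamma$ changes at every stage. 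A correct argument must run the induction on the modified family of equations $(m+1)\grad\mu\times v + \mu\curl v = g$, with hypotheses on $g$ weaker than those in the proposition, rather than on the proposition itself. You flag both difficulties but dispose of them with ``ultimately resolved by exploiting the specific geometry of the Alfeld split,'' which is precisely the content that is missing.

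The secondary issue you raise --- compatibility along an edge $e = F_k\cap F_l$ of $T$ of the two prescriptions of the tangential trace of $\gamma_r$ --- is real but fillable: the $t_e$-components prescribed from $F_k$ and from $F_l$ agree because the normal continuity of $w$ across the interior face $[z,x_i,x_j]\supset e$ combines with the elementary identity expressing $\mathrm{dist}(z,F_k)$ as the distance from $z$ to the line through $e$ times the sine of the dihedral angle between $F_k$ and $[z,x_i,x_j]$, so that the angle and distance factors cancel between the two sides. Writing this out is routine but necessary, since the tangential part of $\gamma_r$ on each boundary face is forced and no ``freedom in the normal component'' can repair a mismatch. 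As it stands the proposal is a plausible outline rather than a proof; to be acceptable it should either carry out the modified induction indicated above or simply cite \cite{fu2018exact}, as the paper does.
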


We now collect the dimensions of the above-introduced spaces for any
degree $r
\ge 1$.
A detailed discussion of 
first two counts below can be found, e.g.,
in~\cite{fu2018exact}. The others are standard.
\begin{subequations}
  \label{eq:dimstd}
  \begin{alignat}{1}
    \label{eq:2}
\text{dim }\mathring{S}_r^0(\Ta)& =  \max(\frac{2}{3}
{(r-4)}(r-3)(r-2), \; 0), \\
\text{dim }S_r^0(\Ta) & = {r+3 \choose 3}
+
\frac  1 2 {(r-3)}(r-2)(r-1)
\\
\text{ dim } \mathring{L}_r^0(\Ta)& =
1+4(r-1)+6\frac{{(r-2)}(r-1)}{2}+4\frac{{(r-3)}(r-2)(r-1)}{6},
\\
\text{ dim } L_r^0(\Ta)& = 5+10(r-1)+10\frac{{(r-2)}(r-1)}{2}+4\frac{{(r-3)}(r-2)(r-1)}{6}, \\
\text{ dim } \mathring{W}_r^1(\Ta)& = 4(r+1)+6(r-1)(r+1)+4\frac{{(r-2)}(r-1)(r+1)}{2}, \\
\text{ dim } \mathring{W}_r^2(\Ta)& = 6\frac{(r+1)(r+2)}{2} +  4\frac{(r-1)(r+1)(r+2)}{2}, \\
\text{ dim } \mathring{W}_r^3(\Ta)& = 4\frac{(r+1)(r+2)(r+3)}{6}-1, \\
\text{ dim } W_r^1(\Ta)& = 10(r+1)+10(r-1)(r+1)+4\frac{{(r-2)}(r-1)(r+1)}{2}, \\   
\text{ dim } W_r^2(\Ta)& = 10\frac{(r+1)(r+2)}{2}+ 4\frac{(r-1)(r+1)(r+2)}{2},\\
 \text{ dim } W_r^3(\Ta)& = 4\frac{(r+1)(r+2)(r+3)}{6}. 
\end{alignat}
\end{subequations}

We conclude this section by outlining the basic approach we shall adopt for
constructing the elasticity complex on Alfeld splits.  The approach
is the same
as what others \cite{ArnolFalkWinth07} have pursued, known under the
previously noted name,  the BGG resolution. This
theme is developed further in another recent
work~\cite{arnold2020complexes}.  For our purposes here, it is
sufficient to have the following simple result.  Suppose
$\Zc_i, \Vc_i$ are Banach spaces, $r_i : \Zc_i \to \Zc_{i+1}$,
$t_i : \Vc_i \to \Vc_{i+1}$, and $s_i: \Vc_i \to \Zc_{i+1}$ are
bounded linear operators such that the following diagram commutes:
\begin{equation}
  \label{eq:5}
\begin{tikzcd}
  \Zc_0 \arrow{r}{r_0}
  & \Zc_1 \arrow{r}{r_1}
  & \Zc_2\arrow{r}{r_2}
  & \Zc_3
 \\
 \Vc_0   \arrow[r, "t_0"] \arrow[ru, "s_0"]
 & \Vc_1 \arrow[r, "t_1"] \arrow[ru, "s_1"]
 & \Vc_2 \arrow[r, "t_2"] \arrow[ru, "s_2"]
 & \Vc_3
\end{tikzcd}
\end{equation}
i.e., $r_{i+1} s_i = s_{i+1} t_i$ for $i=0, 1$. We are interested in
the situation where the top ($\Zc$) sequence and the bottom ($\Vc$)
sequence are complexes that form exact sequences.
Then we have the following result that employs the
Cartesian products $\Zc_i \times \Vc_i$, which we write as
$[\begin{smallmatrix} \Zc_i \\ \Vc_i
\end{smallmatrix}]$ so as to use the matrix multiplication pattern 
as a mnemonic.  Specifically,
$
  [r_0 \; s_0]:
  [
  \begin{smallmatrix}
    \Zc_0 \\ \Vc_0 
  \end{smallmatrix}
  ] \to \Zc_1$
and
$ [
  \begin{smallmatrix}
    s_2 \\ t_2
  \end{smallmatrix}
  ]: \Vc_2 \to
[
  \begin{smallmatrix}
    \Zc_3 \\ \Vc_3
  \end{smallmatrix}
  ]
  $
are defined, respectively, by
\begin{align*}
  \begin{bmatrix}
    r_0 & z_0
  \end{bmatrix}
          \begin{bmatrix}
            z \\ v 
          \end{bmatrix}
  = 
  r_0 z + s_0 v,
  \qquad\quad
    \begin{bmatrix}
      s_2 \\ t_2    
    \end{bmatrix}
  v
  = 
    \begin{bmatrix}
      s_2v \\ t_2  v  
    \end{bmatrix}.
\end{align*}

\begin{proposition}
  \label{prop:bgg_basic}
  \revjj{Suppose $s_1$ is a bijection.}
  \begin{enumerate}
  \item\label{item:1:prop:bgg_basic} \revjj{If $\Zc_i$ and $\Vc_i$
      are exact sequences and the diagram~\eqref{eq:5} commutes,
      then the following is an exact sequence:}
    \[
      \begin{tikzcd}[column sep=huge]
        \begin{bmatrix}
          \Zc_0 \\ \Vc_0        
        \end{bmatrix}
        \arrow{r}
        {\left[\begin{smallmatrix}r_0 \; s_0 \end{smallmatrix}\right]}
        &
        \Zc_1
        \arrow[r, "t_1\, \circ \,s_1^{-1}\, \circ\, r_1"]
        & \Vc_2
        \arrow{r}{\left[\begin{smallmatrix}
              s_2 \\ t_2 \end{smallmatrix}\right]}
        &
        \begin{bmatrix}
          \Zc_3 \\ \Vc_3
        \end{bmatrix}
      \end{tikzcd}.
    \]
  \item\label{item:2:prop:bgg_basic} \revjj{For the surjectivity of
      the last map
      $\left[\begin{smallmatrix} s_2 \\ t_2 \end{smallmatrix}\right]$,
      it is sufficient that $r_2$ and $t_2$ are surjective,
      $t_1 \circ t_2=0$, and $s_2 t_1 = r_2 s_1$.}
  \end{enumerate}
\end{proposition}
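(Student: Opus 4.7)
The plan is a straightforward diagram chase within the commutative diagram~\eqref{eq:5}, invoking only the commutativity relations $r_{i+1} s_i = s_{i+1} t_i$, the complex identities $r_{i+1} r_i = 0$ and $t_{i+1} t_i = 0$ for the two rows, and the bijectivity of $s_1$. First I would verify the complex property of the proposed sequence. For the composition of the first two maps, the $\Zc_0$ input vanishes by $r_1 r_0 = 0$, and the $\Vc_0$ input is transferred to the bottom row by $r_1 s_0 = s_1 t_0$, leaving $t_1 t_0 = 0$. For the composition of the last two maps, the $t_2$-component vanishes by $t_2 t_1 = 0$, and the $s_2$-component vanishes by first applying $s_2 t_1 = r_2 s_1$ and then $r_2 r_1 = 0$.

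For exactness at $\Zc_1$ in item~\ref{item:1:prop:bgg_basic}, given $z \in \Zc_1$ with $t_1 s_1^{-1} r_1 z = 0$, I would set $w := s_1^{-1} r_1 z$ and use exactness of the bottom sequence to obtain $v \in \Vc_0$ with $t_0 v = w$. Shifting this back to the top row via $r_1 s_0 = s_1 t_0$ yields $r_1(z - s_0 v) = 0$, after which exactness of the top sequence produces $z_0 \in \Zc_0$ with $r_0 z_0 = z - s_0 v$. For exactness at $\Vc_2$, given $v \in \Vc_2$ in the kernel of $\left[\begin{smallmatrix} s_2 \\ t_2 \end{smallmatrix}\right]$, I would first write $v = t_1 w$ using exactness of the bottom sequence, then observe that $r_2(s_1 w) = s_2 t_1 w = s_2 v = 0$, so exactness of the top sequence supplies $z \in \Zc_1$ with $r_1 z = s_1 w$. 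Applying $s_1^{-1}$ recovers $w = s_1^{-1} r_1 z$, whence $v = t_1 w = (t_1 \circ s_1^{-1} \circ r_1)\, z$.

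For item~\ref{item:2:prop:bgg_basic}, given a target $(z_3, v_3) \in \Zc_3 \times \Vc_3$, I would first choose $v' \in \Vc_2$ with $t_2 v' = v_3$ using surjectivity of $t_2$. This handles the bottom component but in general leaves $s_2 v'$ different from $z_3$. To correct this, I would use surjectivity of $r_2$ to produce $z_2 \in \Zc_2$ with $r_2 z_2 = z_3 - s_2 v'$, and then set $\tilde v := t_1 s_1^{-1} z_2$. The commutativity $s_2 t_1 = r_2 s_1$ yields $s_2 \tilde v = z_3 - s_2 v'$, while $t_2 t_1 = 0$ yields $t_2 \tilde v = 0$, so $v := v' + \tilde v$ maps to $(z_3, v_3)$ under $\left[\begin{smallmatrix} s_2 \\ t_2 \end{smallmatrix}\right]$. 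The whole argument is essentially bookkeeping; the single genuine ingredient, and the point on which everything hinges, is that bijectivity of $s_1$ allows elements of $\Zc_2$ to be pulled back to $\Vc_1$, both in the $\Vc_2$-exactness step and in the correction term $\tilde v$ of the final surjectivity construction.
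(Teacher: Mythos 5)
Your proposal is correct and follows essentially the same diagram chase as the paper's proof: the exactness arguments at $\Zc_1$ and $\Vc_2$ are identical, and your surjectivity construction merely merges the paper's two applications of the surjectivity of $r_2$ (to $z_3$ and to $s_2\tilde v_2$ separately) into a single one applied to $z_3 - s_2 v'$, yielding the same corrector $t_1 s_1^{-1} z_2$.
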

\begin{proof}
  The range of
  $ \left[r_0 \; s_0 \right]$ is
  contained in the kernel of $t_1 \circ s_1^{-1} \circ r_1$ because
  for any $(z, v) \in \Zc_0 \times \Vc_0$
  \[
    t_1 s_1^{-1} r_1 (r_0 z + s_0 v) = t_1 s_1^{-1} r_1 s_0 v
    =  t_1 s_1^{-1} s_1 t_0 v
    =     t_1 t_0 v = 0,
  \]
  where we have used the given assumptions that the
  top and bottom sequences in~\eqref{eq:5} are complexes and
  that the commutativity property, $r_1 s_0 = s_1t_0$, holds.  For the
  reverse inclusion, if $z \in \Zc_1$ is in the kernel of
  $t_1 \circ s_1^{-1} \circ r_1$, then $s_1^{-1} r_1 z$ is in
  $\ker t_1 = \ran t_0,$ so there is a $v \in \Vc_0$ such that
  $s_1^{-1} r_1 z = t_0 v$, i.e.,
  $0 = r_1 z - s_1t_0 v = r_1 z - r_1 s_0 v $ using the commutativity
  property again. Hence,  $z - s_0 v$ is in
  $\ker r_1 = \ran r_0$, i.e.,
  $z = s_0 v + r_0 z_0$ for some $z_0 \in
  \Zc_0$, thus showing that  $z$ is in the
  range of $ \left[r_0 \; s_0 \right]$ and completing the proof of
  $\ran \left[ r_0 \; s_0 \right] =
  \ker(t_1 \circ s_1^{-1} \circ r_1).$

  We use the other commutativity property, $r_2s_1 = s_2 t_1,$ to prove
  $\ran (t_1 \circ s_1^{-1} \circ r_1) = \ker
  \left[\begin{smallmatrix} s_2 \\ t_2 \end{smallmatrix}\right]$.
  Consider a $v_2$ in the latter kernel, i.e., $s_2v_2 =0$ and
  $t_2v_2=0$. Since $\ker(t_2)= \ran(t_1)$, there is a
  $v_1 \in \Vc_1$ such that $v_2 = t_1(v_1)$, so
  $0 = s_2v_2 = s_2 t_1 v_1 = r_2 s_1 v_1$. Since
  $s_1v_1 \in \ker(r_2) = \ran(r_1)$, there is a $z_1 \in \Zc_1$ such
  that $s_1 v_1 = r_1 z_1$, i.e., $v_1 = s_1^{-1} r_1 z_1$. Thus
  $v_2 = t_1 v_1 = t_1 s_1^{-1} r_1 z_1$, i.e., 
  $\ran (t_1 \circ s_1^{-1} \circ r_1) \supseteq \ker
  \left[\begin{smallmatrix} s_2 \\ t_2 \end{smallmatrix}\right]$. The
  reverse inclusion is easy.

  To prove the final statement, consider a $z_3 \in \Zc_3$ and
  $v_3 \in \Vc_3$. If $t_2$ is surjective, then there is a
  $\tilde{v}_2 \in \Vc_2$ such that $t_2 \tilde{v}_2 = v_3$.  If $r_2$
  is also surjective, then we can find a $z_2$ and $\tilde{z}_2$ in
  $\Zc_2$ such that $r_2 z_2 = z_3$ and
  $r_2 \tilde{z}_2 = s_2 \tilde{v}_2$. It may now be easily verified
  that \revjj{$v_2 = t_1 s_1^{-1} z_2 + \tilde{v}_2 - t_1 s_1^{-1}
  \tilde{z}_2$} in $\Vc_2$ satisfies 
  $ [
  \begin{smallmatrix}
    s_2 \\ t_2
  \end{smallmatrix}
  ] v_2 = 
[
  \begin{smallmatrix}
    z_3 \\ v_3
  \end{smallmatrix}
  ].
  $
\end{proof}

In the next two sections, we shall construct specific instances
of the $\Zc$ and $\Vc$ sequences in such a way that
Proposition~\ref{prop:bgg_basic} may then be applied to produce an
elasticity complex.

\section{The first exact sequence of spaces}\label{sec:first}

In this section, we develop
one of the above-mentioned two sequences of 
spaces. This sequence is comprised of the following spaces:
\begin{alignat*}{1}
V_{r}^0(\Ta) &=S_{r}^0(\Ta), \\
V_{r}^1(\Ta) &=\{ \omega \in L_r^1(\Ta):   \text{ $\omega$ is $C^1$ at vertices of $T$}\}, \\
V_{r}^2(\Ta) &= \{ \omega \in W_r^2(\Ta):  \text{ $\omega$ is $C^0$ at vertices of $T$}\}, \\
V_{r}^3(\Ta)& =W_r^3(\Ta).
\end{alignat*}
Note that we do not impose additional continuity at the interior
vertex $z,$ making these spaces slightly different from a sequence of
similar spaces considered in \cite{fu2018exact}. The corresponding
spaces with boundary conditions are given as follows:
\begin{alignat*}{1}
\mV_{r}^0(\Ta) &= \mathring{S}_{r}^0(\Ta), \\
\mV_{r}^1(\Ta) &=\{  \omega \in \mathring{L}_r^1(\Ta):   \;\grad
\omega=0 \text{ at vertices of } T  \}, \\
\mV_{r}^2(\Ta) &= \{ \omega \in \mathring{W}_{r}^2(\Ta): \;\omega=0
\text{ at vertices of } T   \}, \\
\mV_{r}^3(\Ta)& =\mathring{W}_r^3(\Ta).
\end{alignat*}
Note an $\omega \in \mL_r^1(\Ta)$ generally has a multivalued
$\grad \omega$ at the vertices of $T$, so the statement
``$\grad \omega=0$ at vertices of $T$'' above should be understood as
follows: $\grad\omega$ exists at $x_i$ (i.e., the multiple limiting
values coincide) and equals 0. The statement ``$\omega=0$ at vertices
of $T$'' for $\omega \in \mW_r^2(\Ta)$ above, and similar such statements
later in the paper, carry the same tacit understanding.

\subsection{Characterizations and dimensions of the $V$ spaces}

We shall now provide some characterizations of the $\mV$ spaces which
makes their dimensions obvious.
Let
$\Fz$ denote the set of interior facets (2-subsimplices) of the mesh
$\Ta$. Each $f \in \Fz$ has $z$ as a vertex. The subcollection of
three facets in $\Fz$ having $x_i$ as a vertex is denoted by
$\Fz_i$. Let $w_{n, f}$ denote the normal component of $w$ on an
$f\in \Fz$, i.e., $w_{n, f} = w \cdot n|_f$ where $n$ is a unit normal
to $f$ of arbitrarily fixed orientation.

\begin{lemma} \label{lem:Vochar}
  The following equalities hold:
  \begin{align}
    \label{eq:Vo1char}
    \mV_r^1(\Ta)
    & = \{ \mu \, p: \; p \in L_{r-1}^1(\Ta)
      \text{ satisfying } p(x_i)=0, \text{ for } i=0,1,2,3\},
    \\     \label{eq:Vo2char}
    \mV_r^2(\Ta)
    & = \{ w \in \mathring W_r^2(\Ta): \; w_{n, f}(x_i) =0
  \text{ for all } f \in \Fz_i, i=0,1,2,3\}.
  \end{align}
\end{lemma}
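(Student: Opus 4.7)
The plan is to prove each of the two equalities by a direct set inclusion argument, exploiting the geometry of the Alfeld split and the vanishing properties of the bubble $\mu$.

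For \eqref{eq:Vo1char}, the forward inclusion (from right to left) will be a short verification. Given $\omega = \mu p$ with $p \in L_{r-1}^1(\Ta)$ vanishing at $x_0,\ldots,x_3$, the product is a $C^0$ piecewise polynomial of degree $r$; it lies in $\mathring L_r^1(\Ta)$ because $\mu$ vanishes on $\partial T$. Componentwise differentiation gives $\partial_l(\mu p_k)(x_i) = \mu(x_i)\partial_l p_k(x_i) + p_k(x_i)\partial_l\mu(x_i)$, and since $\mu(x_i)=p(x_i)=0$, every limiting value of $\grad\omega$ at $x_i$ vanishes. For the reverse inclusion, take $\omega\in\mV_r^1(\Ta)$ and work on one sub-tetrahedron $T_j$ at a time. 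The only boundary face of $T_j$ is $F_j=[x_0,\ldots,\widehat{x_j},\ldots,x_3]$, on which $\mu|_{T_j}$ is the barycentric coordinate associated with $z$ and hence vanishes linearly; since $\omega$ also vanishes on $F_j$, polynomial division yields $\omega|_{T_j} = \mu|_{T_j}\, q_j$ with $q_j \in \pol_{r-1}(T_j)^3$. Define $p$ by $p|_{T_j}=q_j$. On any interior facet $f$ shared by $T_j$ and $T_{j'}$, $\mu>0$ on the relative interior of $f$, so the equality $\omega|_{T_j}=\omega|_{T_{j'}}$ forces $q_j|_f = q_{j'}|_f$; hence $p\in L_{r-1}^1(\Ta)$. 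Finally, writing $\partial_l\omega_k(x_i) = p_k(x_i)\,\partial_l\mu|_{T_j}(x_i)$ and using that $\grad\mu|_{T_j}(x_i)\neq 0$ (being the gradient of a non-trivial linear function), the hypothesis $\grad\omega(x_i)=0$ forces $p(x_i)=0$.

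For \eqref{eq:Vo2char}, the forward inclusion is immediate: if all limiting values $\omega|_{T_j}(x_i)$ equal $0$, then in particular every normal component at $x_i$ along any facet is $0$. The substantive direction is the converse, and will rest on the following observation. Fix a vertex $x_i$ and a sub-tetrahedron $T_j$ containing it (so $j\neq i$). The three facets of $T_j$ meeting at $x_i$ consist of the two interior facets in $\Fz_i$ that also lie in $T_j$, together with the single boundary facet of $T_j$ through $x_i$ (the sub-face of $\partial T$ opposite $z$ in $T_j$). Their outward unit normals at $x_i$ are linearly independent because $T_j$ is non-degenerate. On the boundary facet, $\omega\cdot n = 0$ identically because $\omega\in\mathring W_r^2(\Ta)$, so in particular its value at $x_i$ is $0$; on the two interior facets, the value at $x_i$ is $0$ by hypothesis. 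Three independent normal components of $\omega|_{T_j}(x_i)$ therefore vanish, which forces $\omega|_{T_j}(x_i)=0$. Running this argument over all $j\neq i$ gives that every limiting value of $\omega$ at $x_i$ is $0$, so $\omega(x_i)=0$ in the sense of the lemma.

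No major obstacle is expected; the only point requiring some care is the continuity of $p$ across interior facets in the first part, which needs the observation that $\mu$ is non-vanishing on the relative interior of each facet of $\Fz$. Once this is clear, both arguments are essentially exercises in elementary polynomial algebra and the local geometry of the Alfeld split.
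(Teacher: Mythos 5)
Your proposal is correct and follows essentially the same route as the paper: factoring out $\mu$ on each subtetrahedron and using $p(x_i)\grad\mu(x_i)=0$ for \eqref{eq:Vo1char}, and the linear independence of the three facet normals at each vertex (one boundary facet plus two interior facets from $\Fz_i$) for \eqref{eq:Vo2char}. The only difference is that you spell out a couple of details the paper leaves implicit, such as why the quotients $q_j$ match across interior facets.
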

\begin{proof}
  Let $v \in \mV_r^1(\Ta)$. On each $T_i$, since $v$ vanishes on the
  facet where $\mu=0$, we may factor it uniquely as $v = \mu p$ for some
  $p \in \pol_{r-1}(T_i)^3$. Since $v$ is continuous on $T$, we
  conclude that $p \in L_{r-1}^1(\Ta)$. Moreover, since
  $\grad v(x_i) = (\grad \mu p)(x_i)$ and $\mu(x_i)$ are zero, 
  $  p(x_i) \grad \mu(x_i) = 0$. Hence
  $p(x_i)=0$,  so $v$ is in the set on the right hand side
  of~\eqref{eq:Vo1char}.  Since the reverse inclusion
  ``$\supseteq$'' is easy to see, the set equality
  of~\eqref{eq:Vo1char} follows.

  For~\eqref{eq:Vo2char}, since the ``$\subseteq$''-part is easy, let us
  focus on proving the reverse. Let $v$ be in the set on the right
  hand side of~\eqref{eq:Vo2char}.  Consider a vertex, say $x_1$.  Three facets
  of $T_1=[z, x_1, x_2, x_3],$ namely
  $f_1 = [x_1, x_2, x_3], f_2 = [z, x_1, x_2], f_3 = [z, x_1, x_3],$
  meet at $x_1$. Letting $n_i$ denote the outward unit normal on
  $f_i$, observe that $\{n_1, n_2, n_3\}$ is a linearly independent
  set since $T_1$ has positive volume. The given conditions on $v$
  imply that $v_{n_1, f_1} \equiv 0$ and
  $v_{n_2, f_2} (x_1) = v_{n_3, f_3} (x_1) = 0$, i.e., three
  independent components of $v|_{T_1}(x_1) \in \R^3$ vanish, so
  $v|_{T_1}(x_1)=0$. Repeating this argument at other $x_i$ and $T_j$,
  we conclude that all limiting values of $v$ at every vertex $x_i$
  vanish. Therefore $v \in \mV_r^2(\Ta)$.
\end{proof}


Let $W_r^k(T) = \pol_r\Lambda^k(T)$, not to be confused with
$W_r^k(\Ta)$. The well-known canonical degrees of freedom of this
space provides the direct decomposition \cite{arnold2006finite}
\[
  W_r^k(T) = \Wo_r^k(T) \oplus \Wdr k(T),
\]
where $ \Wo_r^k(T)$ is the span of all interior shape functions of
$ \pol_r\Lambda^k(T)$ and $\Wdr k (T)$ is the span of the remaining shape
functions. Let $\Ldr 1 (T) = [\Wdr 0 (T)]^3$.

\begin{lemma}
  \label{lem:V_Vo}
  The following equalities hold:
  \begin{align}
    \label{eq:Vo1Vo2cap}
    \Vo^1_r(\Ta)  & = V_r^1(\Ta) \cap \Lo_r^1 (\Ta),
    & \Vo^2_r(\Ta) & = V_r^2(\Ta) \cap \Wo_r^2(\Ta),
    \\     \label{eq:V1V2decomp}
    V_r^1(\Ta) & = \Vo_r^1(\Ta) \oplus \Ldr 1 (T),
    & V_r^2(\Ta) & = \Vo_r^2(\Ta) \oplus \Wdr 2 (T).
  \end{align}
\end{lemma}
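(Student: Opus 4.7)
The plan is to prove the set equalities in~\eqref{eq:Vo1Vo2cap} first, and then leverage them to obtain the direct sum decompositions in~\eqref{eq:V1V2decomp}. The key geometric observation throughout is that at each vertex $x_i$ of $T$, three faces of $\partial T$ meet, and their outward normals (equivalently, the sum of their tangent planes) span $\R^3$ because $T$ is non-degenerate.

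For~\eqref{eq:Vo1Vo2cap}, the inclusions $\Vo_r^k \subseteq V_r^k \cap \Lo_r^1$ (and $\cap\, \Wo_r^2$ for $k=2$) are immediate from the definitions, since vanishing at a vertex is stronger than single-valuedness there. For the reverse inclusion at $k=1$, take $\omega \in V_r^1 \cap \Lo_r^1$. Since $\omega$ vanishes identically on each of the three faces of $\partial T$ meeting at $x_i$, all tangential derivatives of $\omega$ at $x_i$ along these faces vanish. The $C^1$ hypothesis at $x_i$ makes $\grad\omega(x_i)$ single-valued, and the spanning property forces $\grad\omega(x_i)=0$, so $\omega \in \Vo_r^1$. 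The argument for $V_r^2 \cap \Wo_r^2 \subseteq \Vo_r^2$ is parallel: the normal component $\omega\cdot n$ vanishes on each of the three faces of $\partial T$ at $x_i$, and since $\omega(x_i)$ is single-valued by the $C^0$ hypothesis while the three normals are linearly independent, $\omega(x_i)=0$.

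For~\eqref{eq:V1V2decomp}, I treat the two decompositions in parallel. The inclusions $\Ldr{1}(T) \subseteq V_r^1$ and $\Wdr{2}(T) \subseteq V_r^2$ are immediate, since these are spaces of polynomials on the unsplit $T$ and hence $C^\infty$ at every vertex. Triviality of the intersection reduces to unisolvence of the standard Lagrange and BDM dofs on the single tetrahedron $T$: for example, any $\omega \in \Vo_r^1 \cap \Ldr{1}(T)$ has vanishing boundary dofs (because $\omega$ vanishes on $\partial T$) and vanishing interior Lagrange dofs (by construction of $\Ldr{1}(T)$), so $\omega = 0$. For surjectivity of the sum, given $\omega \in V_r^1$, observe that $\partial T$ is unrefined by the Alfeld split, so $\omega|_{\partial T}$ is a well-defined face-piecewise polynomial on the four unsplit boundary faces of $T$; let $\omega_b \in \Ldr{1}(T)$ match the boundary Lagrange dofs of $\omega$. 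Then $\omega - \omega_b$ has vanishing boundary Lagrange dofs, hence vanishes on $\partial T$, and inherits the $C^1$-at-vertices property from $\omega$ and the polynomial $\omega_b$, placing it in $V_r^1 \cap \Lo_r^1 = \Vo_r^1$ by~\eqref{eq:Vo1Vo2cap}. The $V_r^2$ case is analogous: choose $\omega_b \in \Wdr{2}(T)$ matching the boundary normal face moments of $\omega$, so that $(\omega - \omega_b)\cdot n = 0$ on $\partial T$, and conclude $\omega - \omega_b \in V_r^2 \cap \Wo_r^2 = \Vo_r^2$.

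The main subtlety is the reverse inclusion in~\eqref{eq:Vo1Vo2cap}, where global trace conditions on $\partial T$ must be upgraded to pointwise conditions on derivatives or values at the vertices. The essential ingredients are the spanning property of the boundary face tangents or normals at each vertex of $T$, together with the single-valuedness of the relevant derivative or value built into the definition of $V_r^k$. With these in hand, the direct sum decompositions in~\eqref{eq:V1V2decomp} reduce to standard bookkeeping with Lagrange or BDM dofs on the unsplit tetrahedron.
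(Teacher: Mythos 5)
Your proof is correct and follows essentially the same route as the paper's: establish~\eqref{eq:Vo1Vo2cap} by combining the single-valuedness at vertices with the linear independence of three directions (you use the tangent planes of the three boundary faces at $x_i$ where the paper uses the three edges, and the three face normals for the $k=2$ case exactly as the paper does), and then derive~\eqref{eq:V1V2decomp} by subtracting a polynomial extension of the boundary trace and invoking~\eqref{eq:Vo1Vo2cap}. The only cosmetic difference is that you spell out the directness of the sum via unisolvency of the boundary/interior dofs, which the paper leaves as an easy remark.
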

\begin{proof}
  Let $v \in V_r^1(\Ta) \cap \Lo_r^1 (\Ta).$ Then $v$ is $C^1$ at
  $x_i$, so $\grad v (x_i)$ is well-defined. Since $v$ is zero along
  the three edges of $T$ connected to $x_i$, three linearly
  independent components of the vector $\grad v_j (x_i)$ are zero for each $1 \le j \le 3$, so
  $\grad v (x_i)=0$. Hence
  $\Vo^1_r(\Ta) \supseteq V_r^1(\Ta) \cap \Lo_r^1 (\Ta)$.  Together
  with the obvious reverse inclusion, the first equality
  of~\eqref{eq:Vo1Vo2cap} follows. The proof of the second is similar:
  indeed, if $w \in V_r^2(\Ta) \cap \Wo_r^2(\Ta)$, then $w$ is $C^0$
  at $x_i$, so $w(x_i)$ is a single-valued vector whose three
  independent components $w(x_i) \cdot n_j$ (for $j\ne i$) vanish,
  where $n_j$ is the unit normal to the facet opposite to $x_j$.
  Hence 
  $w \in \Vo^2_r(\Ta)$.

  To prove the first decomposition of~\eqref{eq:V1V2decomp}, first
  observe that the sets $\{ u|_{\partial T} : u \in L_r^0(\Ta)\}$ and
  $\{ u|_{\partial T} : u \in \pol_r(T)\}$ coincide.  Consequently,
  the trace of any $v \in V_r^1(\Ta) \subseteq L_r^1(\Ta)$ has a
  unique extension in $\Ldr 1(T)$, which we shall call $v_L$. Put
  $v_0 = v - v_L$. We claim that
  \[
    v = v_0 + v_L
  \]
  is the required decomposition. Indeed, since $v_L$ is a polynomial
  on $T$ (and hence smooth), the function $v_0 = v - v_L$ is in
  $V_r^1(\Ta)$. Moreover, since the trace of $v_0$ is zero,
  $v_0 \in V_r^1(\Ta) \cap \Lo_r^1 (\Ta)$, so by~\eqref{eq:Vo1Vo2cap},
  $v_0 \in \Vo^1_r(\Ta)$. (The directness of the decomposition follows
  easily by examining the boundary values of the component spaces.)
  
  The second decomposition in~\eqref{eq:V1V2decomp} is proved
  similarly.
  \end{proof}

\begin{lemma}
  \label{lem:dimVoV}
  The dimensions of the $\Vo$ and $V$ spaces for any $r \ge 1$
  are as follows.
  \begin{align*}
    \dim \Vo^0_r(\Ta)
    & = \dim \mathring{S}^0_r(\Ta),
    & \dim V^0_r(\Ta) & =  \dim S_r^0(\Ta),
    \\
    \dim \Vo^1_r(\Ta)
    & = \max(2r^3 - 3r^2 + 7r - 15,\; 0),                     
    & \dim V^1_r(\Ta) & = 6(r^2 + 1) + \dim \Vo^1_r(\Ta),
    \\
    \dim \Vo^2_r(\Ta) & = 2 r^3 + 7r^2 + 7r -10,
    & \dim V^2_r(\Ta) & = 2r^3 + 9r^2 + 13 r - 6,
    \\
    \dim \Vo^3_r(\Ta) & = \frac 2 3 (r+1)(r+2)(r+3)  -1,
    & \dim V^3_r(\Ta) & = \frac 2 3 (r+1)(r+2)(r+3).                       
  \end{align*}
\end{lemma}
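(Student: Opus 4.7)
The plan handles the eight dimension equalities in pieces, using the characterizations and decompositions in Lemmas~\ref{lem:Vochar} and~\ref{lem:V_Vo}. The cases $V^0_r, \Vo^0_r, V^3_r, \Vo^3_r$ are immediate, since by definition these spaces coincide with $S^0_r, \mS^0_r, W^3_r, \mW^3_r$ whose dimensions appear in~\eqref{eq:dimstd}.

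For $\dim \Vo^1_r(\Ta)$ the plan is to apply~\eqref{eq:Vo1char}: since $\mu > 0$ in the interior of each subtetrahedron, multiplication by $\mu$ is injective on $L^1_{r-1}(\Ta)$, so $\dim \Vo^1_r$ equals the dimension of the kernel of the vertex-evaluation map $L^1_{r-1}(\Ta) \to \R^{12}$ sending $p$ to $(p(x_0),\ldots,p(x_3))$. For $r \ge 2$ this map is surjective, because Lagrange hat functions at $x_0, \ldots, x_3$ already belong to $L^0_{r-1}(\Ta)$, so the dimension is $3\dim L^0_{r-1}(\Ta) - 12$; substituting~\eqref{eq:dimstd} and simplifying yields $2r^3 - 3r^2 + 7r - 15$. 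The case $r = 1$ is vacuous because any piecewise constant vanishing at every $x_i$ must vanish on every subtetrahedron of $\Ta$.

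Both $\dim V^1_r$ and $\dim V^2_r$ follow from the direct sums of Lemma~\ref{lem:V_Vo} once the boundary-trace dimensions $\dim \Ldr{1}(T)$ and $\dim \Wdr{2}(T)$ are known. Since $\Wo^0_r(T) = \lambda_0\lambda_1\lambda_2\lambda_3\,\pol_{r-4}(T)$ has dimension $\binom{r-1}{3}$, one gets $\dim \Wdr{0}(T) = \binom{r+3}{3} - \binom{r-1}{3} = 2(r^2+1)$ and $\dim \Ldr{1}(T) = 6(r^2+1)$. For $\Wdr{2}(T)$, the boundary traces of $W^2_r(T) = \pol_r(T)^3$ are the four face-normal components in $\pol_r$, giving $\dim \Wdr{2}(T) = 4\binom{r+2}{2} = 2(r+1)(r+2)$.

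The principal obstacle is $\dim \Vo^2_r$: by~\eqref{eq:Vo2char}, this space is the kernel of the map $\Phi: \mW^2_r(\Ta) \to \R^{12}$, $w \mapsto (w_{n,f}(x_i))_{i=0,\ldots,3,\, f \in \Fz_i}$, and the hard part is to show that $\Phi$ is surjective. The plan is to construct, for each pair $(i,f)$ with $f = [z,x_i,x_j] \in \Fz_i$, a dual $w_{i,f} \in \mW^2_r(\Ta)$ whose normal trace on $f$ is the barycentric coordinate of $x_i$ on $f$ and whose normal trace vanishes on every other interior facet of $\Ta$ and on $\partial T$. Existence of such a $w_{i,f}$ reduces on each subtetrahedron $T_k$ to finding a vector field in $\pol_r(T_k)^3$ with prescribed face-normal components in $\pol_r$ on its four faces, which is always possible because these face-normal components are precisely the boundary degrees of freedom of the BDM dof system on $T_k$. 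Then $\Phi(w_{i,f})$ is the standard basis vector indexed by $(i,f)$, $\Phi$ is surjective, and $\dim \Vo^2_r = \dim \mW^2_r(\Ta) - 12 = (r+1)(r+2)(2r+1) - 12$. Adding $\dim \Wdr{2}(T) = 2(r+1)(r+2)$ yields $\dim V^2_r = 2r^3 + 9r^2 + 13r - 6$.
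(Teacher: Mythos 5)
Your proof is correct and follows essentially the same route as the paper: the characterizations of Lemma~\ref{lem:Vochar} reduce $\dim\Vo^1_r$ and $\dim\Vo^2_r$ to subtracting the $12$ vertex constraints from $\dim L^1_{r-1}(\Ta)$ and $\dim \Wo_r^2(\Ta)$, and the decompositions of Lemma~\ref{lem:V_Vo} together with $\dim\Ldr{1}(T)=6(r^2+1)$ and $\dim\Wdr{2}(T)=2(r+1)(r+2)$ handle the unconstrained spaces. The only difference is that you explicitly verify the independence of the $12$ constraints (via hat functions for $\Vo^1_r$ and BDM-type dual fields with prescribed facet-normal traces for $\Vo^2_r$), a step the paper asserts without proof; this is a worthwhile addition but not a different argument.
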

\begin{proof}
  The counts in the first and last rows are obvious from the
  definition and~\eqref{eq:dimstd}. For the remainder, we first claim that
  \begin{align}
    \label{eq:Vodim-12}
    \dim \mV_r^1(\Ta) & = \dim L_{r-1}^1(\Ta) - 12,
    &
     \quad \dim \mV_r^2(\Ta) & = \dim \mathring{W}_r^2(\Ta) - 12.
  \end{align}
  Indeed, by virtue of~\eqref{eq:Vo1char} of Lemma~\ref{lem:Vochar},
  the dimension of $\mV_r^1(\Ta)$ must equal that of $L_{r-1}^1(\Ta)$
  minus the number of independent constraints imposed by the condition
  ``$p(x_i)=0$'' there, which amounts to three linearly independent
  constraints (one for each component) per vertex $x_i$. This yields
  the first count in~\eqref{eq:Vodim-12}. The second
  count in~\eqref{eq:Vodim-12} follows
  from~\eqref{eq:Vo2char} of Lemma~\ref{lem:Vochar}, where at each
  vertex $x_i$, there are three independent constraints, one for each
  interior facet connected to $x_i$. The lemma's expressions of
  $ \dim \mV_r^1(\Ta) $ and $ \dim \mV_r^2(\Ta) $ are now easily
  obtained by substituting~\eqref{eq:dimstd} in~\eqref{eq:Vodim-12},
  and simplifying, noting that $\mV_r^1(\Ta)$ is trivial for $r=1$.

  It only remains to count $\dim V^1_r(\Ta)$ and $\dim
  V^2_r(\Ta)$. From~\eqref{eq:V1V2decomp} of Lemma~\ref{lem:V_Vo},
  \begin{equation}
    \label{eq:dimV12}
    \begin{aligned}    
    \dim V^1_r(\Ta) & = \dim \Vo^1_r(\Ta) + \dim \Ldr 1 (T),
    \\
    \dim V^2_r(\Ta) & = \dim \Vo^2_r(\Ta) + \dim \Wdr 2 (T).
    \end{aligned}
  \end{equation}
  It is easy to see from the canonical set of degrees of freedom of
  $\pol_r\Lambda^k(T)$ that
  \[
    \dim \Ldr 1 (T) = 3\left(
      4 + 6(r-1) + 4 \frac{(r-1)(r-2)}{2}
    \right),
    \qquad
    \dim\Wdr 2 (T)
    = 4\frac{(r+1)(r+2)}{2}.
  \]
  Using this in~\eqref{eq:dimV12} as well as previously computed
  dimensions of $\Vo$~spaces, we obtain the  stated expressions for
  $\dim V^1_r(\Ta)$ and $\dim V^2_r(\Ta)$.
\end{proof}

\subsection{Exactness}

We now proceed to show that the following local sequences
are exact:
\begin{equation}
  \label{localV}
  \begin{tikzcd}[column sep=small]
    0\arrow{r}{} &[1em]
    \mathbb{R}\arrow{r}{\subset} &[1em]
    V_{r}^0(\Ta)\arrow{r}{\grad} & [1em]
    V_{r-1}^1(\Ta)\arrow{r}{\curl} &[1em]
    V_{r-2}^2(\Ta)\arrow{r}{\div} &[1em]
    V_{r-3}^3(\Ta)\arrow{r}{} &0,
  \end{tikzcd}
\end{equation}
\begin{equation}
  \label{mlocalV}
  \begin{tikzcd}[column sep=small]
    0\arrow{r}{} &[1em]
    \mV_{r}^0(\Ta)\arrow{r}{\grad} & [1em]
    \mV_{r-1}^1(\Ta)\arrow{r}{\curl} &[1em]
    \mV_{r-2}^2(\Ta)\arrow{r}{\div} &[1em]
    \mV_{r-3}^3(\Ta)\arrow{r}{} &0.
  \end{tikzcd}
\end{equation}
\revjj{In the sequel, to indicate the null space of a differential
  operator $\mathscr{D}$  in relation to its domain $Y$, 
we use 
$ \ker(\mathscr{D}, Y):=\{w\in Y : \mathscr{D}w=0\}$.}
Note that
$\mV_r^0(\Ta) = \mS_r^0(\Ta)$ is nontrivial only for $r \ge 5$
(see~\eqref{eq:dimstd}).

\begin{lemma} \label{lem:exactVo}
  The sequence \eqref{mlocalV} is exact for any $r \ge 5$.
  \revjj{For $r=3$ and $4$, the subsequence of~\eqref{mlocalV}
    starting from $\mV_{r-1}^1$ is exact.}
\end{lemma}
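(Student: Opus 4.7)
The plan is to establish exactness by combining three ingredients: the boundary-condition analogue of the exact sequence~\eqref{exact1}, the defining property of $\mS^0_r$, and an Euler-characteristic dimension count drawn from Lemma~\ref{lem:dimVoV}. First, \eqref{mlocalV} is trivially a complex ($\curl\circ\grad=0$ and $\div\circ\curl=0$), and $\grad:\mV^0_r \to \mV^1_{r-1}$ is injective, since any $v \in \mV^0_r \subseteq \mathring{L}^0_r$ with $\grad v=0$ must be constant on each $T_i\in\Ta$, hence globally constant by $C^0$ continuity, and then zero by the boundary condition.

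The key step is exactness at $\mV^1_{r-1}$. Given any $w \in \mV^1_{r-1}$ with $\curl w=0$, view $w$ simply as an element of $\mathring{W}^1_{r-1}$ and apply the boundary-condition version of~\eqref{exact1} to obtain $v \in \mathring{W}^0_r = \mathring{L}^0_r$ with $\grad v = w$. The hypothesis $w \in \mathring{L}^1_{r-1}$ then forces $\grad v \in \mathring{L}^1_{r-1}$, which by the very definition of $\mS^0_r$ places $v$ in $\mS^0_r = \mV^0_r$, producing the required potential.

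To upgrade to full exactness at the later stages, verify the Euler identity
\[
  \dim \mV^0_r - \dim \mV^1_{r-1} + \dim \mV^2_{r-2} - \dim \mV^3_{r-3} = 0
\]
by direct substitution of the closed-form formulas of Lemma~\ref{lem:dimVoV} (using $\dim \mV^0_r = 0$ when $r \le 4$). Combining this identity with the already-established exactness at $\mV^1_{r-1}$ together with the trivial inclusions $\curl \mV^1_{r-1} \subseteq \ker(\curl,\mV^2_{r-2})$... more carefully $\curl \mV^1_{r-1}\subseteq\ker(\div,\mV^2_{r-2})$ and $\div \mV^2_{r-2} \subseteq \mV^3_{r-3}$, a rank-nullity computation forces equality in both inclusions, yielding exactness at $\mV^2_{r-2}$ and surjectivity of $\div$ onto $\mV^3_{r-3}$.

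The subsequence case $r=3,4$ needs no separate argument: by~\eqref{eq:2}, $\mV^0_r = \mS^0_r$ is then trivial, so the potential produced in the $\mV^1_{r-1}$ step must vanish, giving injectivity of $\curl$ on $\mV^1_{r-1}$, and the Euler identity (still with $\dim \mV^0_r = 0$) delivers the rest. The main obstacle is arguably the polynomial bookkeeping in the Euler identity; the mathematical heart of the proof, however, is the short potential argument at $\mV^1_{r-1}$, which exploits only the definition of $\mS^0_r$ and thereby bypasses any appeal to the subtler vertex-regularity results of Proposition~\ref{prop:extraC}.
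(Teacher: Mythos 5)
Your opening steps are sound, and your potential argument at $\mV_{r-1}^1(\Ta)$ is a nice explicit version of a step the paper's proof leaves implicit. The gap is in the final step. For the four-term complex \eqref{mlocalV} with cohomology groups $H^0,\dots,H^3$, a vanishing Euler characteristic together with $H^0=H^1=0$ yields only $\dim H^2=\dim H^3$; rank--nullity cannot force both to vanish. Concretely, your two inclusions give $\dim \curl\mV_{r-1}^1(\Ta)\le\dim\ker(\dive,\mV_{r-2}^2(\Ta))$ and $\dim\dive\mV_{r-2}^2(\Ta)\le\dim\mV_{r-3}^3(\Ta)$, and the Euler identity shows exactly that the two deficits are \emph{equal}, not that they are zero (compare the toy complex $0\to 0\to 0\to\R\xrightarrow{\,0\,}\R\to 0$, which has zero Euler characteristic and is exact at the first two slots). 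You therefore need one independent input at the tail. The paper supplies both ends directly: surjectivity of $\dive$ follows in two lines from $\Lo_{r-2}^2(\Ta)\subseteq\Vo_{r-2}^2(\Ta)$ and $\dive\Lo_{r-2}^2(\Ta)=\Wo_{r-3}^3(\Ta)$ (the boundary-condition version of \eqref{exact3} from \cite{fu2018exact}); and exactness at $\mV_{r-2}^2(\Ta)$ is proved via the explicit potential $u=\mu\sum_{\ell}\mu^\ell\gamma_{r-2-\ell}$ of Proposition~\ref{prop:rep-div0}, using $\grad\mu\times v=w-\mu\,\curl v$ to show that the potential has vanishing gradient at the vertices. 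Grafting either one of these onto your argument (the surjectivity of $\dive$ is the cheaper fix) would close the gap, since your dimension count then pins down the one remaining cohomology group.

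A secondary inaccuracy: your closing claim that the proof ``bypasses'' Proposition~\ref{prop:extraC} does not hold up. For \eqref{mlocalV} to be a complex you must verify $\grad\mV_{r}^0(\Ta)\subseteq\mV_{r-1}^1(\Ta)$, and membership in $\mV_{r-1}^1(\Ta)$ requires $\grad(\grad v)=0$ at the vertices of $T$ --- precisely the vanishing of second derivatives asserted in the second item of Proposition~\ref{prop:extraC}. The definition of $\mS_r^0(\Ta)$ alone only gives $\grad v\in\Lo_{r-1}^1(\Ta)$, which is weaker.
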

\begin{proof}
  By Proposition~\ref{prop:extraC}, any $w \in \mV_{r}^0(\Ta)$ is
  $C^2$ at $x_i$, so $\grad w$ vanishes at $x_i$. Therefore,
  $\grad \mV_{r}^0(\Ta) \subseteq \mV_{r-1}^1(\Ta).$ Of course, due to
  the boundary condition,
  $\grad: \mV_{r}^0(\Ta) \to \mV_{r-1}^1(\Ta) $ is also injective.

  Proceeding to the next operator, it's easy to see that
  \revjj{$\curl \Vo_{r-1}^1(\Ta) \subseteq \ker(\div,
    \Vo^2_{r-2}(\Ta)).$} To prove the reverse inclusion, consider a
  $w \in \Vo_{r-2}^2(\Ta)$ with $\dive w =0$. Then, by
  Proposition~\ref{prop:rep-div0}, there is a $u = \mu v$ such that
  $\curl u = w$, where
  {$v = \sum_{\ell=0}^{r-2}\mu^\ell \gamma_{r-2-\ell}$} and
  $\gamma_\ell \in \pol_\ell(T)^3$. This implies that
  \[
    \grad \mu  \times v  = w - \mu \,\curl v.
  \]
  Since $w \in \Vo_{r-2}^2(\Ta)$, the right hand side above vanishes
  at all the vertices of $T$, and so does~$v$. Hence $u = \mu v$ has
  vanishing $\grad u$ at the vertices of $T$, which implies
  $u \in \Vo_{r-1}^1(\Ta)$. Thus
  \revjj{$\curl \Vo_{r-1}^1(\Ta) = \ker(\div, \Vo^2_{r-2}(\Ta))$}.

  Finally, consider the divergence operator. Since
  $\Vo_{r-2}^2(\Ta) \subseteq \Wo_{r-2}^2(\Ta)$, and since the
  standard de Rham complex---the version of \eqref{exact1} with
  boundary conditions---implies that 
  $\dive \Wo_{r-2}^2(\Ta) = \Wo_{r-3}^3(\Ta)$, we have
  $\dive \Vo_{r-2}^2(\Ta) \subseteq \Vo_{r-3}^3(\Ta).$ To improve this
  inclusion to equality, recall that by the exactness of the version
  of~\eqref{exact3} with boundary conditions proved
  in~\cite{fu2018exact}, $\Wo_{r-3}^3(\Ta) = \dive
  \Lo_{r-2}^2(\Ta)$. Since
  $\Vo_{r-2}^2(\Ta)\supseteq \Lo_{r-2}^2(\Ta)$, we conclude that
  $\dive \Vo_{r-2}^2(\Ta) = \Vo_{r-3}^3(\Ta).$
\end{proof}

\begin{lemma} \label{lem:exactV}
The sequence \eqref{localV} is exact for any $r \ge 3$.
\end{lemma}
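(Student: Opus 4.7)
The plan is to establish exactness node-by-node. Exactness at the first node is immediate: the kernel of $\grad$ in $V_r^0\subset H^1(T)$ consists of constants. For exactness at $V_{r-1}^1$, if $w\in V_{r-1}^1\subseteq L_{r-1}^1\subseteq W_{r-1}^1$ satisfies $\curl w=0$, then by the standard exact sequence \eqref{exact1} there exists $u\in W_r^0$ with $\grad u=w$, and since $\grad u\in L_{r-1}^1$ we automatically have $u\in S_r^0=V_r^0$. For the surjectivity of $\div$ at the last node, I would split $q\in V_{r-3}^3=W_{r-3}^3$ into its mean $c=|T|^{-1}\int_T q$ and mean-zero part $q-c\in \mathring W_{r-3}^3=\mV_{r-3}^3$: the mean-zero piece has a preimage in $\mV_{r-2}^2\subseteq V_{r-2}^2$ by Lemma~\ref{lem:exactVo}, while $c$ has the polynomial preimage $(c/3)\,x\in \pol_1(T)^3\subseteq V_{r-2}^2$ (available since $r-2\ge 1$).

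The central step is exactness at $V_{r-2}^2$: given $w\in V_{r-2}^2$ with $\div w=0$, construct $u\in V_{r-1}^1$ with $\curl u=w$. The plan is to peel off a polynomial correction and then invoke Lemma~\ref{lem:exactVo}. Using Lemma~\ref{lem:V_Vo}, decompose $w=w_0+w_L$ with $w_0\in \Vo_{r-2}^2$ and $w_L\in \Wdr{2}(T)\subseteq \pol_{r-2}\Lambda^2(T)$. Setting $p:=\div w_L\in \pol_{r-3}(T)$, the relation $\div w_0=-p$ together with $\div w_0\in \mathring W_{r-3}^3$ forces $\int_T p=0$.

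Next, I would appeal to the classical exactness of the polynomial de~Rham sequence on $T$ with vanishing boundary trace to produce $\tilde w\in \pol_{r-2}\Lambda^2(T)$ with $\tilde w\cdot n|_{\partial T}=0$ and $\div \tilde w=-p$; for $r=3$ this is trivial since the constraint $\int_T p=0$ with $p\in\pol_0(T)$ forces $p=0$ and $\tilde w=0$ works. A small but crucial observation is that the vanishing of $\tilde w\cdot n|_{\partial T}$ forces $\tilde w(x_i)=0$ at each vertex, because the three outward normals of the boundary faces meeting at $x_i$ are linearly independent; consequently $\tilde w\in \mV_{r-2}^2$ by Lemma~\ref{lem:Vochar}. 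Then $w_0-\tilde w\in \mV_{r-2}^2$ is divergence-free, so by Lemma~\ref{lem:exactVo} it equals $\curl u_2$ for some $u_2\in \mV_{r-1}^1\subseteq V_{r-1}^1$.

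Finally, the remainder $w-\curl u_2=w_L+\tilde w$ is a polynomial $2$-form on $T$ of degree at most $r-2$ whose divergence is $p+(-p)=0$. By the exactness of the ordinary polynomial de~Rham complex on $T$, this remainder equals $\curl u_1$ for some $u_1\in \pol_{r-1}\Lambda^1(T)\subseteq V_{r-1}^1$, and $w=\curl(u_1+u_2)$ with $u_1+u_2\in V_{r-1}^1$. The main obstacle --- and the reason the polynomial de~Rham complex with boundary conditions is indispensable --- is bridging the gap between the polynomial part $w_L$, whose divergence need not vanish, and the interior part $w_0$, to which Lemma~\ref{lem:exactVo} applies only after it is made divergence-free. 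The correction $\tilde w$ reconciles the two so that each half can be inverted independently.
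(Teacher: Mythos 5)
Your argument is correct, and its skeleton---subtract a suitable polynomial from the divergence-free $w\in V_{r-2}^2(\Ta)$ so that the remainder lands in $\ker(\dive,\mV_{r-2}^2(\Ta))$, invoke Lemma~\ref{lem:exactVo} there, and absorb the polynomial part by ordinary polynomial de~Rham exactness---is the same as the paper's. The difference is in how the polynomial piece is peeled off. The paper applies the canonical N\'ed\'elec-second-type (BDM) interpolant $\Pi$ directly to the divergence-free field: the commuting property gives $\dive\Pi\rho=0$ and $(\rho-\Pi\rho)\cdot n=0$ on $\partial T$ in one stroke, and the $C^0$ continuity at the vertices then places $\rho-\Pi\rho$ in $\mV_{r-2}^2(\Ta)$ with no further correction. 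You instead use the algebraic splitting of Lemma~\ref{lem:V_Vo} into $\Vo_{r-2}^2(\Ta)$ plus a polynomial trace complement; since neither summand is divergence-free on its own, you must verify $\int_T p=0$ and insert the correction $\tilde w$ supplied by the polynomial complex with boundary conditions (and your observation that a vanishing normal trace forces $\tilde w(x_i)=0$, hence $\tilde w\in\mV_{r-2}^2(\Ta)$, is exactly the right way to re-enter the $\mV$ framework). Both routes are valid for all $r\ge 3$; the interpolant route is shorter because the commuting property does the divergence bookkeeping automatically, while yours avoids interpolation operators at the cost of one extra exactness input. Your treatment of the remaining nodes (the gradient kernel, exactness at $V_{r-1}^1$ via the definition of $S_r^0$, and the mean/mean-zero splitting for the surjectivity of $\dive$, which correctly uses only the part of Lemma~\ref{lem:exactVo} valid down to $r=3$) simply spells out what the paper dismisses as standard.
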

\begin{proof}
  The exactness of
  $\begin{tikzcd}[column sep=small]
    \R \arrow{r}{} 
    & V_{r}^0(\Ta)
    \arrow{r}{\grad}
    &[0.7em] V_{r-1}^1(\Ta)
  \end{tikzcd}
  $
  and
  $\begin{tikzcd}[column sep=small]
    V_{r-2}^2(\Ta)\arrow{r}{\div}
    &[0.6em]
    V_{r-3}^3(\Ta)\arrow{r}{} &0
  \end{tikzcd}$ follow easily using standard exactness results, so we
  shall only consider the $\curl$ case.  Since it is obvious that
  \revjj{$\curl V_{r-1}^1(\Ta) \subseteq \ker (\div,
    V_{r-2}^2(\Ta))$,} let us prove the reverse inclusion. Consider a
  $\rho \in V_{r-2}^2(\Ta)$ with $\dive \rho=0$.  Let
  $\Pi \rho \in [\pol_{r-2}(T)]^3$ be the canonical interpolant of
  $\rho$ per the standard {N\'ed\'elec (second type) degrees of
    freedom~\cite{Nedel86},} defined for $r-2 \ge 1$. Let
  $\psi=\rho-\Pi \rho$.  By the well-known properties of $\Pi$,
  $\dive \psi=0$, and $\psi \cdot n=0$ on $\partial T$.  Since $\rho$
  is $C^0$ at $x_i$, $\psi$ must vanish at the vertices of $T$.  Thus
  $\psi$ is in
\revjj{$\ker(\div, \mV_{r-2}^2(\Ta))$}.
By
  Lemma~\ref{lem:exactVo}, there is an $\omega \in \mV_{r-1}^1(\Ta)$
  such that $\curl \omega_1=\psi$.  By a standard exactness result, we
  also know there is an $\omega_2 \in [\pol_{r-1}(T)]^3$ such that
  $\curl \omega_2=\Pi \rho$. Hence, $\curl \omega=\rho$ where
  $\omega=\omega_1+\omega_2 \in V_{r-1}^1(\Ta)$.
\end{proof}

\subsection{Degrees of freedom of the $V$ spaces} 

The degrees of freedom (dofs) of the $V$ spaces are almost the same as
the ones used in~\cite{fu2018exact}, the   only difference being    that
some of our bubble spaces are less smooth at the interior
point~$z$. The unisolvency proofs are substantially similar to
those in \cite{fu2018exact}, so we do not write them out here. We only
state the degrees of freedom.

Let $r\ge 5$. Then, a function $\omega \in V_r^0(\Ta)$ is uniquely
determined by the following dofs (see \cite[Lemma 4.8]{fu2018exact}) :
\begin{subequations}
\label{eqn:Sigmadofs}
\begin{alignat}{4}
\label{eqn:C1DOF1}
&D^\alpha \omega (a),\qquad &&  |\alpha|\le 2,\quad && a\in \Delta_0(T)\qquad &&\dofcnt{($40$ dofs)},\\
\label{eqn:C1DOF21}
&\int_e \omega \,\sigma {\ds},\qquad && \sigma\in \pol_{r-6}(e),\quad && e\in\Delta_1(T)\qquad &&
\dofcnt{($6(r-5)$ dofs)},\\
\label{eqn:C1DOF22}
&\int_e \frac{\p \omega}{\p n_e^\pm}\,\sigma\,{\ds},\qquad 
&& \sigma\in \pol_{r-5}(e),\quad && e\in\Delta_1(T)\qquad &&
\dofcnt{($12(r-4)$ dofs)},\\
\label{eqn:C1DOF31}
&\int_F \omega \, \sigma \,{\dA},
\qquad 
&& \sigma\in \pol_{r-6}(F),\quad && F\in\Delta_2(T)\qquad &&
\dofcnt{($4\frac{(r-5)(r-4)}{2}$ dofs)},\\
\label{eqn:C1DOF32}
&\int_F \frac{\p \omega}{\p n^\F}\, \sigma \,{\dA},
\qquad 
&& \sigma\in \pol_{r-4}(F),\quad && F\in\Delta_2(T)\qquad &&
\dofcnt{($4\frac{(r-3)(r-2)}{2}$ dofs)},\\
\label{eqn:C1DOF4}
&
\int_T {\grad \omega \cdot\grad\sigma}\,{\dx},
\qquad 
&& \sigma\in \mV_{r}^{0}(\Ta),&& &&
\dofcnt{($2\frac{(r-4)(r-3)(r-2)}{3}$ dofs)}.
\end{alignat}
\end{subequations}
Here, $\{n_{e_+}, n_{e_-}\}$ is an orthonormal set spanning the plane
orthogonal to the edge $e$, $n^\F$ denotes the outward unit normal of
face $F$, and $n^\F \cdot \grad \omega$ is abbreviated to
$\partial \omega/ \partial n^F$.  In the case $r = 5$, the sets listed
in \eqref{eqn:C1DOF21} and \eqref{eqn:C1DOF31} are omitted. It is easy
to see that the sum of dofs above equal $\dim V^0_r(\Ta)$ given in
Lemma~\ref{lem:dimVoV}.

A function $\omega\in V_{r-1}^1(\Ta)$ is uniquely determined by the
values (see \cite[Lemma 4.15]{fu2018exact})
\begin{subequations}
\label{eqn:VecHermitedofs}
\begin{alignat}{4}
\label{eqn:VecHermitedofs1}
&D^\alpha \omega(a), &&|\alpha|\le 1, \; a\in \Delta_0(T)
&&\dofcnt{($48$ dofs)}\\
\label{eqn:VecHermitedofs2}
&\int_e \omega\cdot\kappa\,{\ds}, 
&& \kappa\in [\pol_{r-5}(e)]^3, \; e\in \Delta_1(T)
&& 
\dofcnt{($18(r-4)$ dofs)}\\
\label{eqn:VecHermitedofs3}
&
\int_e (\curl \omega|_F \cdot n^\F)\kappa\, {\ds},\quad
&& \kappa\in \pol_{r-4}(e), 
\\
\nonumber
& && e\in \Delta_1(F), F\in \Delta_2(T),
&&\dofcnt{($12(r-3)$ dofs)}\\
\label{eqn:VecHermitedofs4}
&\int_F (\omega\cdot n^\F)\kappa\, {\dA}, && \kappa\in
\pol_{r-4}(F), \; F\in \Delta_2(T)
&& 
\dofcnt{($2(r-2)(r-3)$ dofs)}\\
\label{eqn:VecHermitedofs6}
&\int_F (n^\F\times (\omega \times n^\F))\cdot \kappa \, {\dA},\quad
&& \kappa\in D_{r-5}(F), \; F\in \Delta_2(T) \hspace{1cm}
&& 
\dofcnt{($4(r-3)(r-5)$ dofs)}\\
\label{eqn:VecHermitedofs7}
&\int_T \omega \cdot \kappa\, {\dx}, \quad
&& \kappa\in \grad \mV_r^0(\Ta),
&&
\dofcnt{($2\frac{(r-4)(r-3)(r-2)}{3}$ dofs)} \\
\label{eqn:VecHermitedofs8}
&\int_T \curl \omega \cdot \kappa\, {\dx}, \qquad
&& \kappa\in \curl \mV_{r-1}^1(\Ta),
&& 
\dofcnt{($\frac{4r^3 - 9r^2 + 5r-33}{3}$ dofs)}
\end{alignat}
\end{subequations}
where
\begin{equation}
  \label{eq:RT}
{  D_{r-5}(F)=[\pol_{r-6}(F)]^2+x \pol_{r-6}(F)  }
\end{equation}
is the
local Raviart--Thomas space on the face $F$. The count of dofs
in~\eqref{eqn:VecHermitedofs8} is obtained as a consequence of the
exactness established in Lemma~\ref{lem:exactVo}, i.e., 
\[
  \dim \big(\curl \Vo_{r-1}^1(\Ta)\big)
  = \dim \Vo_{r-2}^2(\Ta) - \dim \Vo_{r-3}^3(\Ta) = \frac{4r^3 - 9r^2 + 5r-33}{3},
\]
where we have also used the $\Vo$-dimensions given in
Lemma~\ref{lem:dimVoV}. The sum of the dofs in the last column above
can be easily verified to equal the expression for $\dim V^1_{r-1}(\Ta)$
given in Lemma~\ref{lem:dimVoV}. 

\begin{remark}\label{remark121}
Note that if a function $\omega \in V_{r-1}^1(\Ta)$ has  vanishing
degrees of freedom
\eqref{eqn:VecHermitedofs1}--\eqref{eqn:VecHermitedofs6} then
$\omega|_{\partial T}=0$ (see \cite [Lemma 4.15]{fu2018exact}). 
\end{remark}

A function $\omega\in V_{r-2}^2(\Ta)$ is uniquely determined by the
values (see \cite[Lemma 4.17]{fu2018exact})
\begin{subequations}
\label{eqn:StenbergHDiv}
\begin{alignat}{4}
\label{eqn:StenbergHDiv1}
&\omega(a),\quad && a\in \Delta_0(T)\quad &&\dofcnt{($12$ dofs)}\\
\label{eqn:StenbergHDiv2}
&\int_e (\omega\cdot n^\F) \kappa\, {\ds},\ && \kappa \in
\pol_{r-4}(e), \; {e\in \Delta_1(F)},\;  F\in \Delta_2(T)\quad &&
{\dofcnt{($12(r-3)$ dofs)}}\\
\label{eqn:StenbergHDiv3}
&\int_F (\omega\cdot n^\F)\kappa\, {\dA}, \quad &&{\kappa\in
\revjj{{\pol}_{r-5}(F)}}, \; F\in \Delta_2(T)\qquad &&{\dofcnt{($2(r-3)(r-4)$ dofs)}}\\
\label{eqn:StenbergHDiv4}
&\int_T \omega \cdot \kappa\, {\dx}, \quad &&  \kappa \in \curl \mV_{r-1}^1(\Ta)
 && \dofcnt{($\frac{4r^3 - 9r^2 + 5r-33}{3}$ dofs)}
\\
\label{eqn:StenbergHDiv5}
&\int_T (\Div\omega) \,(\dive \kappa) \, {\dx},\quad && \kappa \in \mV_{r-2}^2 (\Ta) 
 &&\dofcnt{($\frac 2 3 (r-2)(r-1)r - 1$ dofs)}
\end{alignat}
\end{subequations}
where we have used {the} exactness result of Lemma~\ref{lem:exactVo} again
to count the number of dofs in \eqref{eqn:StenbergHDiv5}, 
$\dim( \dive \mV_{r-2}^2 (\Ta)) = \dim \Vo_{r-3}^3(\Ta)$. The total
number of dofs above can again be easily seen to equal
$\dim V_{r-2}^2(\Ta)$ in Lemma~\ref{lem:dimVoV}.

Finally, for  $r\ge 5$, 
a function $\omega \in V_{r-3}^3 (\Ta)$ is uniquely determined by
the following degrees of freedom (see \cite[Lemma 4.18]{fu2018exact}):
\begin{subequations}
\label{Wdofs}
\begin{alignat}{4}
\label{Wdofs1}
&\int_T \omega \, {\dx}, && &&\quad\qquad \dofcnt{($1$ dof)}\\
\label{Wdofs2}
&\int_T \omega \, \kappa \, {\dx}, \quad &&\quad \kappa\in
\mV_{r-3}^3(\Ta), &&\quad\qquad \dofcnt{($\frac 2 3 r(r-1)(r-2)  -1$ dofs)}.
\end{alignat}
\end{subequations}
With these dofs, the following result can be proved along the same
lines as  \cite{fu2018exact}.

\begin{theorem} \label{thm:PiV-local}
  Let $\Pi_i^V$ denote the canonical interpolation into
  $V_{r-i}^i(\Ta)$ defined by the dofs of $V_{r-i}^i(\Ta)$ set
  above. Then, for $r\ge 5$ the following diagram commutes
 \[
   \begin{tikzcd}
     \mathbb{R} \ar{r}
     & C^\infty(\bar T)  \ar{r}{\grad} \ar{d}{\Pi_0^V}
     & \left[C^\infty(\bar T)\right]^3  \ar{r}{\curl} \ar{d}{\Pi_1^V}
     & \left[C^\infty(\bar T)\right]^3 \ar{r}{\dive} \ar{d}{\Pi_2^V}
     & C^\infty(\bar T)  \ar{r} \ar{d}{\Pi_3^V}
     & 0
     \\
     \mathbb{R} \ar{r}
     & V_r^0(\Ta) \ar{r}{\grad} 
     & V_{r-1}^1(\Ta) \ar{r}{\curl}
     & V_{r-2}^2(\Ta) \ar{r}{\dive}
     & V_{r-3}^3(\Ta) \ar{r}
     & 0. 
   \end{tikzcd}
 \]
\end{theorem}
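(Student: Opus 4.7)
The plan is to employ the standard finite-element-exterior-calculus strategy for commuting projections. Since $\Pi_i^V$ preserves the degrees of freedom defining $V_{r-i}^i(\Ta)$, for each such dof $\phi$ we have $\phi(\Pi_i^V(du)) = \phi(du)$, where $d \in \{\grad,\curl,\dive\}$ denotes the relevant differential. Commutativity then reduces to proving $\phi(d e) = 0$ for every such $\phi$, where $e := u - \Pi_{i-1}^V u$ is the interpolation error in the preceding space, which by construction has all degrees of freedom of $V_{r-i+1}^{i-1}(\Ta)$ equal to zero. I would carry this out separately for the three squares, in each case subdividing the work by the geometric type (vertex, edge, face, interior) of the dof.

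For the vertex dofs, Proposition~\ref{prop:extraC} ensures the relevant derivatives of elements of the $V$-spaces are single-valued at the vertices of $T$, and moreover the vertex dofs of each $V$-space capture derivatives one order higher than those of the next $V$-space: \eqref{eqn:C1DOF1} controls $D^\alpha e$ up to $|\alpha|=2$, \eqref{eqn:VecHermitedofs1} evaluates $D^\alpha(\grad e)$ only up to $|\alpha|=1$, and \eqref{eqn:StenbergHDiv1} evaluates $\curl e$ at a vertex in terms of $\grad e$ there. Hence all vertex conditions on $d e$ reduce to vanishing vertex data of $e$.

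For the edge and face moment dofs, decomposing $de$ into tangential and normal components on the relevant submanifold and applying one integration by parts transfers the derivative from $e$ onto the test function $\kappa$. The polynomial-degree shifts baked into the dof scheme (e.g.\ $\pol_{r-6}$ versus $\pol_{r-5}$ along edges, and the Raviart--Thomas space $D_{r-5}(F)$ on faces in \eqref{eqn:VecHermitedofs6}) are calibrated so that the transferred test functions land precisely in the polynomial test spaces appearing in the dofs of $e$, while the endpoint and $\partial F$ boundary contributions vanish by previously verified lower-dimensional dofs of $e$. The interior dofs are then handled by design: the conditions of the form $\int_T(du)\cdot\kappa$ with $\kappa$ in the image of the preceding differential (\eqref{eqn:VecHermitedofs7}, \eqref{eqn:StenbergHDiv4}, \eqref{Wdofs2}) match directly the bubble dofs of $e$ (\eqref{eqn:C1DOF4} and analogues), whereas those whose test functions lie in the image of the \emph{same} differential we are applying (\eqref{eqn:VecHermitedofs8}, \eqref{eqn:StenbergHDiv5}) vanish trivially because $d \circ d = 0$.

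I expect the main obstacle to be the face moment in the $\curl$-square, namely $\int_F(\curl e \cdot n^\F)\kappa = 0$ for $\kappa \in \pol_{r-5}(F)$ from \eqref{eqn:StenbergHDiv3}. One rewrites $\curl e \cdot n^\F$ as a two-dimensional surface curl of the tangential trace $e_\tau = n^\F\times(e\times n^\F)$ and integrates by parts on $F$; the interior term becomes an $F$-moment of $e_\tau$ against $\gradF\kappa$, which must be matched to the face dof \eqref{eqn:VecHermitedofs6} via the inclusion $\gradF\kappa \in D_{r-5}(F)$, while the $\partial F$ boundary contribution must be absorbed by the tangential edge dofs of $e$. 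This careful bookkeeping of polynomial spaces on each subsimplex, together with the analogous (but simpler) analysis for the $\dive$-square, is the bulk of the work; since the dof layout closely mirrors that of \cite{fu2018exact}, the verification proceeds along the same lines, with the only new ingredient being the slightly less smooth interior bubble spaces $\mV$ at the Alfeld vertex $z$, which do not interfere with the above arguments because the test functions there are paired only through volume integrals.
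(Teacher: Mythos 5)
Your proposal is correct and is essentially the argument the paper intends: the paper gives no written proof of Theorem~\ref{thm:PiV-local}, deferring to the ``same lines as \cite{fu2018exact},'' and the strategy you describe (vanishing dofs of the interpolation error, split by subsimplex type, with one integration by parts to shift tangential/surface derivatives onto the test functions, degree matching with the Raviart--Thomas space $D_{r-5}(F)$, and $d\circ d=0$ for the remaining interior dofs) is exactly the pattern the authors do write out for the analogous Theorem~\ref{thm:PiZ}. The only cosmetic slip is that the interior term in your $\curl$-square computation pairs $e_\F$ with $\rotF\kappa$ rather than $\gradF\kappa$, but both lie in $[\pol_{r-6}(F)]^2\subset D_{r-5}(F)$, so the matching with \eqref{eqn:VecHermitedofs6} goes through as you claim.
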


\section{The second exact sequence}\label{sec:second}

In this section, we introduce the second exact sequence with smoother
spaces required for the construction of the elasticity complex in the
next section.
Let
\begin{alignat*}{1}
Z_{r}^0(\Ta) &=S_{r}^0(\Ta), \\
Z_{r}^1(\Ta) &=\{ \omega \in S_r^1(\Ta):   \text{ $\curl \omega$ is $C^1$ at vertices of $T$}\}, \\
Z_{r}^2(\Ta) &= \{ \omega \in L_r^2(\Ta):  \text{ $\omega$ is $C^1$ at vertices of $T$}\}, \\
Z_{r}^3(\Ta)& =\{ \omega \in W_r^3(\Ta): \text{ $\omega$ is $C^0$ at vertices of $T$} \}.
\end{alignat*}
The corresponding spaces with boundary conditions are defined as follows:
\begin{alignat*}{1}
  \mZ_{r}^0(\Ta) &= \mathring{S}_{r}^0(\Ta),
\\
\mZ_{r}^1(\Ta) &=\{ \omega \in \mathring{S}_r^1(\Ta):\;  \grad \curl \omega=0  \text{ at vertices of } T \}, \\
\mZ_{r}^2(\Ta) &= \{ \omega \in \mathring{L}_r^2(\Ta): \; \grad \omega =0  \text{ at vertices of } T   \}, \\
\mZ_{r}^3(\Ta)& =\{ \omega \in \mathring{W}_r^3(\Ta): \; \omega=0 \text{ at vertices of } T \}.
\end{alignat*}
It is easy to verify from these definitions that 
\begin{equation}
  \label{eq:Zcontains}
  \grad Z_{r}^0(\Ta) \subseteq Z_{r-1}^1(\Ta),
  \quad
  \curl Z_{r}^1(\Ta) \subseteq Z_{r-1}^2(\Ta),
  \quad
  \dive Z_{1}^2(\Ta) \subseteq Z_{r-1}^3(\Ta),
\end{equation}
and that similar inclusions hold for the $\mZ$ spaces with boundary
conditions.

\subsection{Exactness and dimensions}

It is obvious from the above definitions that,
we have, as equalities of sets,  the identities
\begin{equation}\label{z2=v1}
Z_r^2(\Ta)=V_r^1(\Ta) \quad \text{ and  }  \quad \mZ_r^2(\Ta)=\mV_r^1(\Ta), 
\end{equation}
even though their form degrees do not match. 
The next relationships are also interesting.
\begin{lemma} \label{lem:ZofromZ}
The following identities hold:
\begin{align}\label{iden-1}
\mZ_{r}^1(\Ta) &=Z_r^1(\Ta) \cap  \mathring{S}_r^1(\Ta),  \\ \label{iden-2}
\mZ_{r}^2(\Ta) &=  Z_{r}^2(\Ta)  \cap \mathring{L}_r^2(\Ta) .
\end{align}
\end{lemma}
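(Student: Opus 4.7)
My plan is to establish each identity by verifying the two inclusions separately. In both cases the $\subseteq$ direction is essentially a matter of unpacking definitions: for~\eqref{iden-1}, if $\omega \in \mZ_r^1(\Ta)$ then $\omega \in \mathring{S}_r^1(\Ta) \subseteq S_r^1(\Ta)$, and the condition that $\grad\curl\omega$ vanishes at every vertex (interpreted, in the convention of the paper, as all multi-valued limits coinciding with $0$) automatically makes $\curl\omega$ a $C^1$ field at the vertices, placing $\omega$ in $Z_r^1(\Ta)$; similarly for~\eqref{iden-2}, the condition $\grad\omega=0$ at vertices already entails $C^1$-regularity there.

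For the reverse inclusions the single observation I would exploit is an elementary geometric fact about $T$: at each vertex $x_i$ of $T$, three edges of $T$ are incident and their tangent vectors are linearly independent, hence span $\R^3$. Consequently, whenever a scalar function $f$ vanishes on those three edges and possesses a single-valued gradient at $x_i$, the three directional derivatives of $f$ along these independent directions at $x_i$ must all vanish, forcing $\grad f(x_i)=0$. I would then apply this componentwise to a vector field.

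To finish~\eqref{iden-2}: given $\omega \in Z_r^2(\Ta) \cap \mathring{L}_r^2(\Ta)$, each component of $\omega$ lies in $\mathring{W}_r^0(\Ta)$ and therefore vanishes on $\partial T$, in particular on the three edges meeting at each $x_i$; since $\omega$ is $C^1$ at $x_i$ by virtue of $\omega \in Z_r^2(\Ta)$, the observation above applied componentwise gives $\grad\omega(x_i)=0$, hence $\omega \in \mZ_r^2(\Ta)$. To finish~\eqref{iden-1}: given $\omega \in Z_r^1(\Ta) \cap \mathring{S}_r^1(\Ta)$, I would set $\psi=\curl\omega$ and note that $\omega \in \mathring{S}_r^1(\Ta)$ forces $\psi \in \mathring{L}_{r-1}^2(\Ta)=[\mathring{W}_{r-1}^0(\Ta)]^3$, so each component of $\psi$ vanishes on $\partial T$. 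Combining this componentwise vanishing with the $C^1$-regularity of $\psi$ at the vertices (supplied by $\omega \in Z_r^1(\Ta)$) and invoking the edge-direction argument yields $\grad\psi(x_i)=\grad\curl\omega(x_i)=0$, so $\omega \in \mZ_r^1(\Ta)$.

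There is no genuine obstacle here; no dimension counts or potential representations are needed, and neither of the delicate results (Proposition~\ref{prop:extraC} or Proposition~\ref{prop:rep-div0}) is invoked. The only point that requires care is bookkeeping: one must keep straight the multi-valued-versus-single-valued interpretation of vanishing derivatives at interior vertices, and notice that the $\mathring{L}_r^j$-spaces as defined in Section~\ref{sec:preliminary} encode \emph{componentwise} vanishing of a vector field on $\partial T$, rather than merely the vanishing of the trace associated with $\curl$ or $\dive$. It is precisely this componentwise boundary vanishing that makes the three-edges-at-a-vertex argument go through.
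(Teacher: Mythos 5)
Your proof is correct and follows essentially the same route as the paper's: the paper establishes \eqref{iden-2} by citing the identification \eqref{z2=v1} together with the left equality of \eqref{eq:Vo1Vo2cap} in Lemma~\ref{lem:V_Vo}, whose proof is precisely your three-independent-edges-at-a-vertex argument, and it likewise obtains \eqref{iden-1} by applying the \eqref{iden-2}-type reasoning to $\curl\omega$. The only difference is that you inline the edge argument where the paper invokes the earlier lemma.
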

\begin{proof}
  By \eqref{z2=v1}, the equality of sets in\eqref{iden-2}
  is immediate since it is  exactly the left
  equality in \eqref{eq:Vo1Vo2cap}. To show \eqref{iden-1}, it
  suffices to prove that
  $\mathring{S}_r^1(\Ta) \cap Z_r^1(\Ta)\subset \mZ_{r}^1(\Ta)$. Let
  $\omega \in \mathring{S}_r^1(\Ta) \cap Z_r^1(\Ta) \subset
  \mathring{L}_r^1(\Ta) \cap V_r^1(\Ta)$.
  Then $\curl \omega \in \mathring{L}_{r-1}^2(\Ta) \cap
  Z_{r-1}^2(\Ta)$, so  by
  \eqref{iden-2},  $\curl \omega \in \mZ_{r-1}^2(\Ta)$. 
  Hence $\grad \curl \omega=0$  at the vertices of $T$ and   we
  conclude that $\omega \in \mZ_{r}^1(\Ta)$.
\end{proof}

Now, we turn to the exactness properties of the
sequences of $Z$ and $\mZ$ spaces. Note that $\mZ_{r+1}^0(\Ta) =
\mS_{r+1}^0(\Ta)$ is nontrivial only for $r\ge 4$.

\begin{lemma}
  \label{lem:Zo_exactnesssec}
  The following sequence is exact for any $r \ge 4$.
  \begin{equation}
    \label{mlocalZ}
        \begin{tikzcd}[column sep=small]
      0\arrow{r}{} &[1em]
      \mZ_{r+1}^0(\Ta)\arrow{r}{\grad} &[1em]
      \mZ_{r}^1(\Ta)\arrow{r}{\curl} &[1em]
      \mZ_{r-1}^2(\Ta)\arrow{r}{\div} &[1em]
      \mZ_{r-2}^3(\Ta)\arrow{r}{} &0.
    \end{tikzcd} 
  \end{equation}
\end{lemma}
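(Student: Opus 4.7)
The plan is to verify exactness at each of the four positions in the sequence: injectivity of $\grad$, the two kernel-range equalities at $\mZ_r^1$ and $\mZ_{r-1}^2$, and surjectivity of $\div$ onto $\mZ_{r-2}^3$. Well-definedness of each map is routine from the defining vertex conditions together with the identity $\tr\grad = \div$: for example, $\curl \mZ_r^1 \subseteq \mZ_{r-1}^2 = \mV_{r-1}^1$ is precisely the vertex condition built into $\mZ_r^1$, and $\div \mZ_{r-1}^2 \subseteq \mZ_{r-2}^3$ follows by taking the trace of $\grad\psi$. Injectivity of $\grad$ is standard from the boundary condition.

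For $\ker(\curl, \mZ_r^1) \subseteq \grad\mZ_{r+1}^0$, given $\omega \in \mZ_r^1 \subseteq \mS_r^1$ with $\curl\omega = 0$, I would invoke the exactness of the boundary-conditioned version of~\eqref{exact3} (with the degree parameter shifted by one) to produce $v \in \mS_{r+1}^0 = \mZ_{r+1}^0$ with $\grad v = \omega$. For $\ker(\div, \mZ_{r-1}^2) \subseteq \curl\mZ_r^1$, given $\psi \in \mZ_{r-1}^2 = \mV_{r-1}^1 \subseteq \mL_{r-1}^2$ with $\div\psi = 0$, the same exactness supplies $\omega \in \mS_r^1$ with $\curl\omega = \psi$; the identity $\grad\curl\omega = \grad\psi$ combined with the vertex condition $\grad\psi(x_i) = 0$ built into $\mV_{r-1}^1$ then places $\omega$ in $\mZ_r^1$.

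The substantive step is surjectivity of $\div$ onto $\mZ_{r-2}^3$. Given $v \in \mZ_{r-2}^3 \subseteq \mW_{r-2}^3$, I would apply the divergence statement of Proposition~\ref{prop:rep-div0} to obtain $u = \mu\sum_{\ell=0}^{r-2}\mu^\ell\gamma_{r-2-\ell}$ with $\gamma_\ell \in \pol_\ell(T)^3$ and $\div u = v$. The prefactor $\mu$ immediately gives $u \in \mL_{r-1}^1$, so the remaining obstacle is verifying the vertex condition $\grad u(x_i) = 0$ required for membership in $\mV_{r-1}^1 = \mZ_{r-1}^2$. On each subtetrahedron $T_k$ with $k \ne i$, the product rule together with $\mu(x_i) = 0$ yields $\grad u|_{T_k}(x_i) = \grad\mu|_{T_k}(x_i) \otimes \gamma_{r-2}(x_i)$, whose trace equals $v|_{T_k}(x_i) = \grad\mu|_{T_k}(x_i) \cdot \gamma_{r-2}(x_i)$. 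The hypothesis $v \in \mZ_{r-2}^3$ forces $v|_{T_k}(x_i) = 0$ for every $k \ne i$; since the three vectors $\grad\mu|_{T_k}(x_i)$ (each a positive multiple of the barycentric gradient of $T$ that vanishes on the face of $T$ opposite $x_k$) are linearly independent whenever $z$ lies interior to $T$, they span $\R^3$, forcing $\gamma_{r-2}(x_i) = 0$ and hence $\grad u(x_i) = 0$.

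The delicate step is this geometric conversion of the scalar vertex condition $v(x_i) = 0$ into the vector condition $\gamma_{r-2}(x_i) = 0$; it relies essentially on the Alfeld geometry, namely that the three face-normals of the mesh meeting each boundary vertex $x_i$ span $\R^3$ because $z$ lies interior to $T$.
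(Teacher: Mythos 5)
Your proposal is correct and follows essentially the same route as the paper: the first three exactness properties are reduced to the known exactness of the boundary-conditioned $\mathring{S}$ sequence, and the surjectivity of $\div$ is obtained from Proposition~\ref{prop:rep-div0} by showing that the potential $u=\mu\psi$ satisfies the vertex condition, using the linear independence of the three vectors $\grad\mu|_{T_k}(x_i)$, $k\ne i$. The paper phrases this last step via the identity $\grad\mu\cdot\psi=\rho-\mu\,\dive\psi$ rather than via the rank-one form of $\grad u(x_i)$ and its trace, but the computation is the same.
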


\begin{proof}
  We shall only prove the exactness of
  $
  \begin{tikzcd}[cramped]
    \mZ_{r-1}^2(\Ta)\arrow{r}{\div} &
    \mZ_{r-2}^3(\Ta)\arrow{r}{} &0,
  \end{tikzcd}
  $ as all the other exactness properties can be shown easily from the
  known exactness of the $\mathring{S}$ sequence, i.e., the analogue
  of \eqref{exact} with boundary conditions, and inclusions similar
  to~\eqref{eq:Zcontains} for the $\mZ$ sequence. To show
  that
  $\dive: \mZ_{r-1}^2(\Ta) \to \mZ_{r-2}^3(\Ta) $ is surjective, let
  $\rho \in \mZ_{r-2}^3(\Ta) \subset \mathring{W}_{r-2}^3(\Ta)$.  By
  Proposition~\ref{prop:rep-div0}, there is an $\omega$ of the form
  $\omega = \mu \psi$ where
  $\psi = \sum_{\ell=2}^{r-2} \mu^\ell \gamma_{r-2-\ell}$ and
  $\gamma_{r-2-\ell} \in [\pol_{r-2-\ell}(T)]^3$, such that
  $\dive \omega=\rho$. The last equality can be rewritten as
  \[
    \grad \mu \cdot \psi = \rho - \mu \,\dive \psi.
  \]
  At the vertices of $T$, all the limit values of the right hand side
  above vanish, as $\rho$ is in $\mZ_{r-2}^2(\Ta)$. Since the three
  $\grad \mu$ vectors on the three faces of $\partial T$
  that meet at a vertex are linearly
  independent, we conclude that $\psi$ vanishes at the vertices of
  $T$. Thus, by the product rule,
  $\grad \omega = \grad (\mu \psi)$ also vanishes at the vertices of
  $T$. Hence, $\omega \in \mZ_{r-1}^2(\Ta)$.
\end{proof}

Let $\ell_i$ for each $i=0,\ldots, 3,$ denote a quadratic function in $\pol_2(T)$ satisfying
\begin{equation}
  \label{eq:ell2}
  \ell_i(x_j) = \delta_{ij}, \qquad \int_T \ell_i\, {\dx} = 0.
\end{equation}
E.g., one may set $\ell_i$ as the sum of a linear function and an edge
bubble, $\ell_i= \lambda_i + c \lambda_j \lambda_k$  (using
the barycentric coordinates $\lambda_m$ of $T$) for some $j \ne k$
where the scalar $c$ is chosen to make $\ell_i$ mean-free.

\begin{lemma}
  \label{lem:Z_exact}
  The following sequence is exact for  $r \ge 4$.  
  \begin{equation}\label{localZ}
    \begin{tikzcd}[column sep=small]
      0\arrow{r}{} &\mathbb{R}\arrow{r}{\subset} &[1em]
      Z_{r+1}^0(\Ta)\arrow{r}{\grad} &[1em]
      Z_{r}^1(\Ta)\arrow{r}{\curl} &[1em]
      Z_{r-1}^2(\Ta)\arrow{r}{\div} &[1em]
      Z_{r-2}^3(\Ta)\arrow{r}{} &0.
    \end{tikzcd} 
  \end{equation}
\end{lemma}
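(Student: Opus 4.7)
The plan is to verify exactness at each position of~\eqref{localZ} by reducing to the already-established exactness of the smoother $S$-sequences~\eqref{exact3}, \eqref{exact4} and of the boundary-conditioned $\mZ$-sequence of Lemma~\ref{lem:Zo_exactnesssec}. Injectivity of the inclusion $\mathbb{R}\hookrightarrow Z_{r+1}^0(\Ta)$ and the identification $\ker(\grad,Z_{r+1}^0(\Ta)) = \mathbb{R}$ are immediate from $Z_{r+1}^0(\Ta) = S_{r+1}^0(\Ta) \subset C^0(T)$, and the three forward containments $\grad Z_{r+1}^0 \subseteq Z_r^1$, $\curl Z_r^1 \subseteq Z_{r-1}^2$, $\dive Z_{r-1}^2 \subseteq Z_{r-2}^3$ are all covered by~\eqref{eq:Zcontains}. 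Only the reverse inclusions and the final surjectivity of $\dive$ require genuine argument.

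For the reverse inclusion at $Z_r^1(\Ta)$, an $\omega\in Z_r^1(\Ta)\subseteq S_r^1(\Ta)$ with $\curl\omega = 0$ is, by the exactness of~\eqref{exact4}, the gradient of some $\phi \in S_{r+1}^0(\Ta) = Z_{r+1}^0(\Ta)$. The reverse inclusion at $Z_{r-1}^2(\Ta)$ is equally cheap: given $\rho\in Z_{r-1}^2(\Ta)$ with $\dive\rho = 0$, apply~\eqref{exact3} with its index raised by one (so that its $L$-slot is $L_{r-1}^2(\Ta)$) to produce $\omega\in S_r^1(\Ta)$ with $\curl\omega = \rho$; since $\rho$ is by hypothesis $C^1$ at the vertices of $T$, the very definition of $Z_r^1(\Ta)$ places $\omega$ there automatically. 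Both of these steps are essentially tautological, because the ``$C^k$-at-vertices'' clause in the definition of each $Z$-space is inherited from the next $Z$-space via the corresponding differential operator.

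The main obstacle is the surjectivity $\dive \colon Z_{r-1}^2(\Ta)\to Z_{r-2}^3(\Ta)$. The strategy here is to peel off a low-degree polynomial that carries both the vertex values and the mean of a given $\rho\in Z_{r-2}^3(\Ta)$, so that the residue lands in $\mZ_{r-2}^3(\Ta)$ and can then be lifted via Lemma~\ref{lem:Zo_exactnesssec}. Explicitly, setting $\bar\rho := |T|^{-1}\int_T \rho$ and using the quadratic functions $\ell_i$ from~\eqref{eq:ell2}, I would define
\[
  p := \bar\rho + \sum_{i=0}^{3}\bigl(\rho(x_i) - \bar\rho\bigr)\,\ell_i \;\in\; \pol_2(T),
\]
so that $p(x_j) = \rho(x_j)$ and $\int_T p = \int_T \rho$; hence $\tilde\rho := \rho - p$ belongs to $\mZ_{r-2}^3(\Ta)$. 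A standard polynomial de~Rham construction on $T$ then yields a smooth $\omega_p \in [\pol_{r-1}(T)]^3 \subseteq Z_{r-1}^2(\Ta)$ with $\dive\omega_p = p$, while Lemma~\ref{lem:Zo_exactnesssec} supplies $\omega_1 \in \mZ_{r-1}^2(\Ta)\subseteq Z_{r-1}^2(\Ta)$ with $\dive\omega_1 = \tilde\rho$; then $\omega := \omega_p + \omega_1$ satisfies $\dive\omega = \rho$, completing the proof.
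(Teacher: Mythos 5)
Your proof is correct and follows essentially the same route as the paper's: the easy exactness claims are reduced to the exactness of the $S$-sequences, and the surjectivity of the divergence is obtained by subtracting a quadratic built from the $\ell_i$ of~\eqref{eq:ell2} so that the residue lands in $\mZ_{r-2}^3(\Ta)$ and can be lifted via Lemma~\ref{lem:Zo_exactnesssec}, with the remaining low-degree polynomial lifted by a standard polynomial potential. The one (welcome) refinement is that you also subtract the mean $\bar\rho$, which is in fact needed for the residue to satisfy the zero-mean condition built into $\mathring{W}_{r-2}^3(\Ta)$ --- a point the paper's own proof passes over.
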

\begin{proof}
  All the exactness properties, except the last, follow immediately
  from the exactness of the $S$ sequence. To prove the surjectivity of
  $\dive : Z_{r-1}^2(\Ta) \to Z_{r-2}^3(\Ta)$, let
  $\rho \in Z_{r-2}^3(\Ta)$. Using the $\ell_i$ in~\eqref{eq:ell2},
  set $\rho_2 = \sum_{i=0}^3 \ell_i \rho(x_i),$ which
  is in $Z_{r-2}^3(\Ta)$ as $r -2 \ge 2$. Moreover, $\rho - \rho_2$ is
  in $\mZ_{r-2}^3(\Ta)$. Hence, using Lemma~\ref{lem:Zo_exactnesssec},
  we can find $\omega_1 \in \mZ_{r-1}^2(\Ta)$ such that
  $\dive \omega_1=\rho - \rho_2$. By standard exactness results, there
  is an $\omega_2 \in [\pol_{3}(T)]^3$ such that
  $\dive \omega_2= \rho_2$.  Hence,
  $\omega=\omega_1+\omega_2 \in Z_{r-1}^2(\Ta)$ satisfies
  $\dive \omega=\rho$.
\end{proof}

\begin{lemma}
  \label{lem:dimZ}
  The dimensions of the $\mZ$ and $Z$ spaces are as follows.
  \begin{alignat}{5}
    \label{eq:dimZ-0}
    \dim \mZ_r^0(\Ta) 
    & = \dim \mathring{S}_r^0(\Ta), 
    &
    \dim Z_r^0(\Ta)
    &=     \dim S_r^0(\Ta), &\quad(r \ge 1),
    \\
    \label{eq:dimZ-1}
    \dim \mZ_r^1(\Ta)
    & = (2r-3)(r-3)(r-2), 
    &\quad
    \dim Z^1_r(\Ta)
    &=  2r^3 - 3r^2 + 13 r -4  &(r \ge 4).
    \intertext{The following  formulas for
      the dimensions of  $\mZ^2_r(\Ta)$ and $\mZ^3_r(\Ta)$ 
      hold for $r \ge 2$, while those for $Z^2_r(\Ta)$ and 
      $ Z^3_r(\Ta)$ hold for $r \ge 1$:}
    \label{eq:dimZ-2}
    \dim \mZ^2_r(\Ta)
    & =2 r^3 -3 r^2 + 7r -15, 
    &
    \dim Z^2_r(\Ta)
    &=
    2r^3 + 3r^2 + 7 r - 9, 
    \\     \label{eq:dimZ-3}
    \dim \mZ^3_r(\Ta) & =
    \dim W_r^3(\Ta) - 13,
    &
    \dim Z^3_r(\Ta)
    &= \dim W_r^3(\Ta) - 8.
  \end{alignat}
\end{lemma}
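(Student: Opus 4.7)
The plan is to establish the four rows of dimension identities in turn, in the order $0$-form, $3$-form, $2$-form, $1$-form, each case relying on the previously handled ones.

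The $0$-form row is immediate from the definitions $\mZ_r^0(\Ta)=\mathring{S}_r^0(\Ta)$ and $Z_r^0(\Ta)=S_r^0(\Ta)$, combined with the dimensions recorded in~\eqref{eq:dimstd}.

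For the $3$-form row, I would use a direct constraint count on $W_r^3(\Ta)$. A piecewise polynomial $\omega\in W_r^3(\Ta)$ carries three a priori independent values at each boundary vertex $x_i$, one for each of the three tetrahedra of $\Ta$ incident to $x_i$. Therefore ``$\omega$ is $C^0$ at vertices of $T$'' (equality of the three values) imposes exactly $2$ linearly independent constraints per vertex, for $4\times 2=8$ in total, giving $\dim Z_r^3(\Ta)=\dim W_r^3(\Ta)-8$. The stronger condition ``$\omega=0$ at vertices of $T$'' forces all three evaluations per vertex to vanish, for $12$ in total, and these are linearly independent of the mean-zero functional defining $\mathring W_r^3(\Ta)$: the bubble $\mu\in W_r^3(\Ta)$ vanishes at every boundary vertex yet has $\int_T \mu>0$. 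This yields $\dim \mZ_r^3(\Ta)=\dim W_r^3(\Ta)-13$.

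The $2$-form row follows at once from the set-theoretic equalities $Z_r^2(\Ta)=V_r^1(\Ta)$ and $\mZ_r^2(\Ta)=\mV_r^1(\Ta)$ recorded in~\eqref{z2=v1}, together with the formulas for $\dim V_r^1(\Ta)$ and $\dim\mV_r^1(\Ta)$ from Lemma~\ref{lem:dimVoV}; the hypothesis $r\ge 2$ is exactly what ensures that $\max(2r^3-3r^2+7r-15,0)$ is attained by the polynomial term rather than by zero.

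Finally, for the $1$-form row, I would invoke the exactness of \eqref{mlocalZ} and \eqref{localZ} established in Lemmas~\ref{lem:Zo_exactnesssec} and~\ref{lem:Z_exact}, and equate the alternating sum of dimensions to zero to obtain
\[
  \dim\mZ_r^1(\Ta)=\dim\mZ_{r+1}^0(\Ta)+\dim\mZ_{r-1}^2(\Ta)-\dim\mZ_{r-2}^3(\Ta),
\]
and
\[
  \dim Z_r^1(\Ta)=\dim Z_{r+1}^0(\Ta)+\dim Z_{r-1}^2(\Ta)-\dim Z_{r-2}^3(\Ta)-1,
\]
the trailing $-1$ in the second identity accounting for the leading $\mathbb R$ in~\eqref{localZ}. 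Substituting the already-established expressions for the other rows and simplifying produces the claimed cubic polynomials; the hypothesis $r\ge 4$ is exactly what ensures that every formula used in the substitution is valid. I expect the main obstacle to be the constraint-independence step in the $3$-form count—verifying that the twelve vertex-evaluation functionals together with the mean-zero functional act independently on $W_r^3(\Ta)$—because this count is what the $1$-form row ultimately hinges on through the alternating sum. All other steps reduce to definitions, to an application of~\eqref{z2=v1}, or to routine Euler-characteristic arithmetic in the two exact sequences.
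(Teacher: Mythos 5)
Your proof is correct and follows essentially the same route as the paper's: the $0$- and $2$-form rows from the definitions and \eqref{z2=v1} with Lemma~\ref{lem:dimVoV}, and the $1$-form row from the Euler characteristic of the exact sequences \eqref{mlocalZ} and \eqref{localZ}. The only (cosmetic) difference is in the $3$-form row, where you count constraints directly and certify the extra mean-value constraint via the bubble $\mu$, while the paper reaches the same codimensions $8$ and $13$ by exhibiting explicit complements $\pol_1(T)$ and $\mpol_2(T)$ inside $Z_r^3(\Ta)$ and $\tilde Z_r^3(\Ta)$; both arguments ultimately rest on the same elementary independence of the twelve vertex-evaluation functionals on $W_r^3(\Ta)$.
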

\begin{proof}
  Dimensions in~\eqref{eq:dimZ-0} are obvious.  Those
  in~\eqref{eq:dimZ-2} are also obvious  from \eqref{z2=v1} and
  Lemma~\ref{lem:dimVoV}.  We proceed to prove~\eqref{eq:dimZ-3}
  followed by~\eqref{eq:dimZ-1}.
  Let
  \begin{equation}
    \label{eq:Zhat}
    \hat{Z}^3_r (\Ta) = \{ w \in W_r^3(\Ta): \; w(x_i) =  0\}.     
  \end{equation}
  We claim that
  \begin{equation}
    \label{eq:Z3}
    Z_r^3(\Ta) = \hat{Z}^3_r(\Ta) \oplus \pol_1(T), \qquad \text{ for
    } r \ge 1.
  \end{equation}
  Indeed, given any $z \in Z_r^3(\Ta)$, constructing a
  $\lambda = \sum_{i=0}^4 \lambda_i z(x_i)$, and putting
  $\hat z = z - \lambda$, we have the decomposition
  $ z = \hat z + \lambda,$ with $\hat z \in \hat{Z}^3_r(\Ta)$ and
  $\lambda \in \pol_1(T).$ This proves~\eqref{eq:Z3} since the reverse
  inclusion is obvious. It is easy to count the dimension of
  $\hat{Z}^3_r (\Ta)$: the constraints $w(x_i)=0$ form three
  linearly independent constraints 
  at each $x_i$, so 
  $
    \dim \hat{Z}^3_r (\Ta) = \dim W_r^3(\Ta) - 12.
  $
  Then,~\eqref{eq:Z3} yields
  \[
    \dim Z_r^3(\Ta)
    = \dim \hat{Z}^3_r (\Ta)  + 4 = \dim W_r^3(\Ta) - 8.
  \]

  To count the dimension of $\mZ_r^3(\Ta)$, let us start with
  $\tZ_r^3 (\Ta) = \{ w \in Z_r^3(\Ta): \int_T w \, {\dx} =
  0\},$ whose dimension is obviously $\dim Z_3^r(\Ta) - 1$.  We claim
  that, with
  $\mpol_2(T) = \text{span}\{\ell_0, \ell_1, \ell_2, \ell_3\}$, where
  $\ell_i$ is as in~\eqref{eq:ell2}, we have
  \begin{equation}
    \label{eq:Zo3}
    \tZ_r^3(\Ta) = \mZ_r^3(\Ta) \oplus \mpol_2(T), \qquad \text{ for }
    r \ge 2.
  \end{equation}
  Indeed, given any $\tilde z \in \tZ_r^3(\Ta)$, setting
  $\ell = \sum_{i=0}^4 \ell_i \tilde z(x_i)$ and
  $\mathring{z} = \tilde z - \ell$, we have the decomposition
  $\tilde z = \mathring z + \ell$ with $\mathring z \in \mZ_r^3(\Ta)$
  and $\ell \in \mpol_2(T)$. Combined with the obvious reverse
  inclusion, we obtain~\eqref{eq:Zo3}. Consequently, 
  $
    \dim \mZ^3_r(\Ta)
     = \dim \tZ_3^r(\Ta) - \dim \mpol_2(T)
     = \dim Z_3^r(\Ta) - 5.
  $
  This finishes the proof of~\eqref{eq:dimZ-3}.

  It only remains to prove~\eqref{eq:dimZ-1}, for which we use the
  already proved exactness. Restricting to $r \ge 4$ in order to apply
  Lemma~\ref{lem:Zo_exactnesssec},  the rank-nullity theorem gives 
  \begin{alignat*}{1}
    \dim  \mZ_r^1(\Ta)
    & = \dim \curl (\mZ_r^1(\Ta))
    + \dim  \grad (\mZ_{r+1}^0(\Ta))
    \\
    & = \dim  \mZ_{r-1}^2(\Ta)-\dim  \mZ_{r-2}^3(\Ta)
    + \dim  \mZ_{r+1}^0(\Ta).
  \end{alignat*}
  This proves one of the equalities stated in~\eqref{eq:dimZ-1}. In
  the same way, Lemma~\ref{lem:Z_exact} yields
  $
    \dim  Z_r^1(\Ta)
    =  \dim  Z_{r-1}^2(\Ta)-\dim  Z_{r-2}^3(\Ta)
    + \dim  Z_{r+1}^0(\Ta) -1,
  $
  whenever $r \ge 4$, thus proving the other equality.
\end{proof}

\subsection{Degrees of freedom and commuting projections for the $Z$ spaces}

The degrees of freedom for $Z_{r+1}^0(\Ta)$ are simply  given by
\eqref{eqn:Sigmadofs} with $r$ replaced with $r + 1$, so we start with
those of $ Z_{r}^1 (\Ta)$. We shall show that
any $\omega \in Z_{r}^1 (\Ta)$, $r \ge 4$, is
uniquely determined by  
the following dofs:
\begin{subequations}
\label{Z1dofs}
\begin{alignat}{4}
\label{Z1dof1}
&D^\alpha \omega(a), \; D^\beta \curl \omega(a),\qquad && |\alpha|\le 1,\; |\beta|=1, \;
  a\in \Delta_0(T),\quad &&\dofcnt{($80$ dofs)}\\
\label{Z1dof2}
&\int_e \omega \cdot\kappa\,{\ds},\quad && \kappa\in
[\pol_{r-4}(e)]^3,\; e\in \Delta_1(T),\quad &&
\dofcnt{($18(r-3)$ dofs)}
\\
\label{Z1dof3}
&\int_e (\curl \omega)\cdot \kappa\, {\ds},\quad
&& \kappa\in [\pol_{r-5}(e)]^3,\; e\in \Delta_1(T),\quad &&
\dofcnt{($18(r-4)$ dofs)}\\
\label{Z1dof4}
&\int_F (\omega \cdot n^\F) \kappa\, {\dA}, \quad && \kappa\in
\pol_{r-3}(F),\; F\in \Delta_2(T),\qquad &&\dofcnt{($2(r-2)(r-1)$ dofs)}\\
\label{Z1dof5}
&\int_F (n^\F \times (\omega \times n^\F)) \cdot \kappa\, {\dA}, \quad && \kappa\in {D}_{r-4}(F),\; F\in \Delta_2(T),\qquad &&\dofcnt{($4(r-2)(r-4)$ dofs)}\\
\label{Z1dof6}
&\int_F (\curl \omega \times n^\F) \cdot \kappa\, {\dA}, \quad &&
\kappa\in [\pol_{r-4}(F)]^2, \;  F\in \Delta_2(T),\quad &&\dofcnt{($4(r-3)(r-2)$ dofs)}\\
\label{Z1dof7}
&\int_T \omega \cdot \kappa \, {\dx}, \quad && \kappa\in \grad
\mZ_{r+1}^{0}(\Ta), && \dofcnt{($\frac 2 3 (r-3)(r-2)(r-1)$ dofs)}
\\
\label{Z1dof8}
&\int_T \curl \omega \cdot \kappa \, {\dx}, \quad && \kappa\in 
\curl \mZ_r^1(\Ta),
&& \dofcnt{($\frac 1 3  (r - 3) (r - 2) (4 r - 7)$ dofs)}.
\end{alignat}
\end{subequations}
Here \eqref{Z1dof1} counts as 80 dofs, rather than 84, since at each
vertex we have the identity $\div\curl \omega=0$.  In~\eqref{Z1dof5},
the space $D_{r-4}(F)$ is the Raviart-Thomas space (defined
in~\eqref{eq:RT}).  In~\eqref{Z1dof6}, we have committed the usual
abuse and written that $\kappa$ is in $ [\pol_{r-4}(F)]^2$ instead of
the (isomorphic) tangent plane of $F$. The count of dofs
in~\eqref{Z1dof8} follows from Lemmas~\ref{lem:Zo_exactnesssec}
and~\ref{lem:dimZ}.

For $\omega\in Z_{r-1}^2(\Ta),$ $r\ge 4$, we define
the following dofs:
\begin{subequations}
\label{Z2dofs}
\begin{alignat}{4}
\label{Z2dof1}
&D^\alpha \omega(a),\qquad &&|\alpha|\le 1\quad && a\in \Delta_0(T),\qquad && \dofcnt{($48$ dofs)}\\
\label{Z2dof2}
&\int_e \omega\cdot\kappa\,{\ds}, \quad && \kappa\in [\pol_{r-5}(e)]^3 &&  e\in \Delta_1(T),\qquad &&
\dofcnt{($18(r-4)$ dofs)}\\
\label{Z2dof3}
&\int_F \omega \cdot \kappa\,{\ds}, \quad && \kappa\in [\pol_{r-4}(F)]^3 &&  F\in \Delta_2(T),\quad 
&& \dofcnt{($6(r-3)(r-2)$ dofs)}\\
\label{Z2dof4}
&\int_T \omega \cdot \kappa\, {\dx}, \quad && \kappa\in \curl \mZ_r^1(\Ta)
&& &&\dofcnt{($\frac 1 3  (r - 3) (r - 2) (4 r - 7)$ dofs)}
\\
\label{Z2dof5}
&\int_T (\dive \omega)\,  \kappa\, {\dx}, \qquad && \kappa\in \dive \mZ_{r-1}^2(\Ta)
&& && \dofcnt{($\frac 2 3 (r+1)r(r-1)-13$ dofs).} \hspace{-2.25cm} 
\end{alignat}
\end{subequations}
The counts in~\eqref{Z2dof4} and~\eqref{Z2dof5} follow from 
Lemmas~\ref{lem:Zo_exactnesssec} and~\ref{lem:dimZ}.

For any $r \ge 4$,  a function $\omega \in Z_{r-2}^3 (\Ta)$
is uniquely determined by
the following dofs:
\begin{subequations}
\label{Z3dofs}
\begin{alignat}{4}
\label{}
&\omega(a),  \quad && a \in \Delta_0(T) \quad && \dofcnt{($4$ dofs)}\\
&\int_T \omega \, {\dx}, &&\quad && \dofcnt{($1$ dof)}\\
\label{}
&\int_T \omega \, \kappa \, {\dx}, \qquad &&\kappa\in
\mZ_{r-2}^3(\Ta), \qquad && \dofcnt{($\frac 2 3 (r+1)r(r-1)-13$ dofs).}
\end{alignat}
\end{subequations}
The unisolvency of~\eqref{Z3dofs} is obvious  
from our definition of $\mZ_{r-2}^3(\Ta)$, so we
shall now focus on proving the
unisolvency of the dofs in \eqref{Z2dofs} and \eqref{Z1dofs}.

\begin{lemma}
  \label{lem:Z2unisolvency}
For any $r \ge 4$, the dofs \eqref{Z2dofs} uniquely determine a function in $Z_{r-1}^2(\Ta)$.
\end{lemma}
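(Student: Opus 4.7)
My plan is the textbook dimension+vanishing approach. First I would verify that the total of the dof counts in~\eqref{Z2dof1}--\eqref{Z2dof5} equals $\dim Z_{r-1}^2(\Ta)$ from Lemma~\ref{lem:dimZ}. The two non-obvious counts, namely those in~\eqref{Z2dof4} and~\eqref{Z2dof5}, follow from Lemma~\ref{lem:Zo_exactnesssec} and the rank--nullity theorem, and a short algebraic sum then matches the total with $2r^3-3r^2+7r-15=\dim Z_{r-1}^2(\Ta)$. So it suffices to show that if every dof vanishes on some $\omega\in Z_{r-1}^2(\Ta)$, then $\omega=0$.

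The first main step is to reduce to the boundary-vanishing case, i.e., to show $\omega|_{\p T}=0$ using only~\eqref{Z2dof1}--\eqref{Z2dof3}. Since $Z_{r-1}^2(\Ta)=V_{r-1}^1(\Ta)$ by~\eqref{z2=v1}, $\omega$ lies in $L_{r-1}^1(\Ta)$, and because the boundary faces of $T$ are not subdivided by the Alfeld refinement, $\omega|_F\in [\pol_{r-1}(F)]^3$ for every $F\in \Delta_2(T)$. Arguing componentwise on such an $F$: the vertex data~\eqref{Z2dof1} force each scalar component $p=(\omega|_F)_k$ and its tangential gradient to vanish at all three vertices of $F$, so along each edge $e\subset F$, $p|_e\in \pol_{r-1}(e)$ vanishes to second order at both endpoints and therefore factors as $\lambda_1^2\lambda_2^2 q$ with $q\in \pol_{r-5}(e)$. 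Using $q$ itself as the test function in~\eqref{Z2dof2} yields $\int_e \lambda_1^2\lambda_2^2 q^2\,{\ds}=0$, hence $q=0$, and thus $p$ vanishes on $\p F$. Consequently $p=\lambda_0\lambda_1\lambda_2\tilde p$ with $\tilde p\in \pol_{r-4}(F)$, and testing~\eqref{Z2dof3} against $\tilde p$ forces $\tilde p=0$. Hence $\omega\in \mZ_{r-1}^2(\Ta)$.

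The remainder is a clean application of the exactness of~\eqref{mlocalZ} proved in Lemma~\ref{lem:Zo_exactnesssec}. Since $\dive:\mZ_{r-1}^2(\Ta)\to \mZ_{r-2}^3(\Ta)$ is surjective, I would select $\kappa\in \mZ_{r-1}^2(\Ta)$ with $\dive \kappa=\dive \omega$ and feed it into~\eqref{Z2dof5}, obtaining $\int_T (\dive\omega)^2\,{\dx}=0$; hence $\dive\omega=0$. Exactness then identifies $\omega\in \ker(\dive,\mZ_{r-1}^2(\Ta))=\curl \mZ_r^1(\Ta)$, so $\omega=\curl \eta$ for some $\eta\in \mZ_r^1(\Ta)$. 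Setting $\kappa=\omega\in \curl \mZ_r^1(\Ta)$ in~\eqref{Z2dof4} then yields $\int_T |\omega|^2\,{\dx}=0$ and therefore $\omega=0$.

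I expect the principal obstacle to be the first step, namely the 2D Hermite-type unisolvency argument on each boundary face, where the $C^1$ vertex data is exactly what enables the double-vanishing factorization of $p|_e$ on each edge; the volume part of the argument follows almost mechanically from the exactness of the $\mZ$-sequence already established in Lemma~\ref{lem:Zo_exactnesssec}.
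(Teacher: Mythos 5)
Your proposal is correct and follows essentially the same route as the paper: match the dof count to $\dim Z_{r-1}^2(\Ta)$, use the vertex, edge, and face dofs to conclude $\omega\in \mZ_{r-1}^2(\Ta)$, and then combine \eqref{Z2dof5}, the exactness of \eqref{mlocalZ}, and \eqref{Z2dof4} to get $\omega\equiv 0$. The only difference is that you spell out the Hermite-type edge/face factorization that the paper leaves implicit.
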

\begin{proof}
The total number of dofs in \eqref{Z2dofs} is exactly the dimension of
the space  $Z_{r-1}^2(\Ta)$. Hence, we only need to show that if
$\omega \in Z_{r-1}^2(\Ta)$ and the dofs in~\eqref{Z2dofs} vanish,
then $\omega \equiv 0$. The dofs in \eqref{Z2dof1} and \eqref{Z2dof2}
make $\omega$ vanish on all the edges of $T$. Then
the dofs in~\eqref{Z2dof3} make $\omega$ vanish on all the
faces of $T$. This, together with the zero first derivatives
of $\omega$ at vertices (due to~\eqref{Z2dof1}) shows that 
$\omega \in \mZ_{r-1}^2(\Ta)$. Then
\eqref{Z2dof5} implies   $\dive \omega=0$. Using the exactness of the sequence \eqref{mlocalZ} and the dofs \eqref{Z2dof4} we conclude that $\omega \equiv 0$. 
\end{proof}

\begin{lemma}
    \label{lem:Z1unisolvency}
For any $r \ge 4$, the dofs \eqref{Z1dofs} uniquely determine a function in $Z_r^1(\Ta)$. 
\end{lemma}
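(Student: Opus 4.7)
The plan is to mimic the structure of the proof of Lemma~\ref{lem:Z2unisolvency}: first verify that the total count of dofs in \eqref{Z1dofs} equals $\dim Z_r^1(\Ta) = 2r^3 - 3r^2 + 13r - 4$ given in Lemma~\ref{lem:dimZ}, and then show that if $\omega \in Z_r^1(\Ta)$ has all dofs in \eqref{Z1dofs} equal to zero, then $\omega \equiv 0$. The counting is routine (the counts of \eqref{Z1dof7} and \eqref{Z1dof8} are obtained from Lemma~\ref{lem:Zo_exactnesssec} and Lemma~\ref{lem:dimZ} exactly as in~\eqref{eqn:VecHermitedofs8}), so the substance is the vanishing argument.

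First I would use the edge-based dofs together with the vertex dofs to reduce $\omega$ to $\mathring{L}_r^1(\Ta)$. Specifically, \eqref{Z1dof1} supplies $\omega(a)$ and $\grad\omega(a)$ at every vertex $a$, and \eqref{Z1dof2} supplies all remaining edge moments; a standard dimension count on each edge $e$ shows these determine $\omega|_e$ completely, so $\omega$ vanishes on all edges of $T$. Next, on each face $F \in \Delta_2(T)$, the normal component $\omega \cdot n^F$ is pinned down by the already-vanishing edge values and \eqref{Z1dof4}, while the tangential component $n^F \times (\omega \times n^F)$ is pinned down by the edge values and \eqref{Z1dof5} (using the standard Raviart--Thomas-type argument on $F$ with $D_{r-4}(F)$). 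Thus $\omega|_{\partial T} = 0$, i.e., $\omega \in \mathring{L}_r^1(\Ta)$. Combined with the fact that $\omega \in Z_r^1(\Ta) \subseteq S_r^1(\Ta)$, we get $\omega \in \mathring{S}_r^1(\Ta) \cap Z_r^1(\Ta)$, and since \eqref{Z1dof1} also makes $\grad \curl \omega$ vanish at the vertices (using additionally that $\div \curl \omega = 0$, which is why the vertex dof count is $80$ and not $84$), Lemma~\ref{lem:ZofromZ} yields $\omega \in \mZ_r^1(\Ta)$.

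Now I would use \eqref{Z1dof8}: since $\curl\omega \in \curl \mZ_r^1(\Ta)$, choosing the test function $\kappa := \curl\omega$ gives $\int_T |\curl\omega|^2\,dx = 0$, so $\curl\omega = 0$. By the exactness of the $\mZ$-sequence \eqref{mlocalZ} established in Lemma~\ref{lem:Zo_exactnesssec}, there exists $q \in \mZ_{r+1}^0(\Ta)$ with $\omega = \grad q$. Finally, \eqref{Z1dof7} says $\int_T \omega \cdot \kappa\,dx = 0$ for all $\kappa \in \grad \mZ_{r+1}^0(\Ta)$; choosing $\kappa = \grad q$ yields $\int_T |\grad q|^2\,dx = 0$, so $q$ is constant, and its vanishing boundary values force $q = 0$. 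Thus $\omega = \grad q = 0$, which completes the argument.

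The main obstacle I anticipate is the second paragraph: carefully verifying that the boundary dofs \eqref{Z1dof1}--\eqref{Z1dof5} (together with the supersmoothness in Proposition~\ref{prop:extraC} and the divergence-free constraint on $\curl\omega$ at vertices) genuinely pin down $\omega|_{\partial T}$ rather than just some partial trace. The dofs \eqref{Z1dof3} and \eqref{Z1dof6} are not needed to eliminate $\omega$ on $\partial T$, but are needed downstream: together with \eqref{Z1dof1} they will be used for unisolvency of the subsequent $\curl\omega$-based dofs in the companion counting (they are consistent constraints by virtue of Lemma~\ref{lem:Zo_exactnesssec}), and they ensure that the face-based argument actually exhausts $\omega|_F$ in the correct direct-sum decomposition, analogous to what is done in~\cite[Lemma~4.15]{fu2018exact} and Remark~\ref{remark121}.
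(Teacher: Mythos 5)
There is a genuine gap in your second paragraph, at the step where you pass from $\omega|_{\partial T}=0$ to $\omega\in\mathring{S}_r^1(\Ta)$. Membership in $\mathring{S}_r^1(\Ta)$ requires $\curl\omega\in\mathring{L}_{r-1}^2(\Ta)$, i.e.\ that $\curl\omega$ itself vanishes on $\partial T$, and this does \emph{not} follow from $\omega\in\mathring{L}_r^1(\Ta)\cap S_r^1(\Ta)$: the vanishing of $\omega$ on $\partial T$ only controls tangential derivatives, so it gives $\curl\omega\cdot n=0$ on each face but says nothing about $\curl\omega\times n$, which involves the normal derivative of $\omega$. Consequently your claim that \eqref{Z1dof3} and \eqref{Z1dof6} ``are not needed to eliminate $\omega$ on $\partial T$, but are needed downstream'' misidentifies their role: they are needed precisely here. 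The paper's proof uses \eqref{Z1dof1} together with \eqref{Z1dof3} to make $\curl\omega$ vanish on all edges of $T$ (a Hermite-type count on each edge, using the values and first derivatives of $\curl\omega$ at the endpoints plus the moments against $[\pol_{r-5}(e)]^3$), and then \eqref{Z1dof6} to make $\curl\omega\times n^\F$ vanish on each face; combined with the automatic vanishing of $\curl\omega\cdot n$, this gives $\curl\omega|_{\partial T}=0$, and only then does Lemma~\ref{lem:ZofromZ} (or, equivalently, the vertex dofs again) yield $\omega\in\mZ_r^1(\Ta)$. Without this step you cannot take $\kappa=\curl\omega$ in \eqref{Z1dof8}, since the test space there is $\curl\mZ_r^1(\Ta)$, so the rest of your argument does not get off the ground.

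The remainder of your plan (boundary reduction of $\omega$ itself via \eqref{Z1dof1}, \eqref{Z1dof2}, \eqref{Z1dof4}, \eqref{Z1dof5}, which is what the paper does via the inclusion $Z_r^1(\Ta)\subset V_r^1(\Ta)$ and Remark~\ref{remark121}; then \eqref{Z1dof8} to kill $\curl\omega$, and exactness of \eqref{mlocalZ} plus \eqref{Z1dof7} to kill $\omega$) coincides with the paper's proof and is fine once the missing step above is inserted.
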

\begin{proof}
We first note that $\dim Z_{r}^1(\Ta)$ is equal to the total number of
dofs in \eqref{Z1dofs}. To prove unisolvency, consider an 
$\omega\in Z_{r}^1(\Ta)$ for which all
the dofs in \eqref{Z1dofs} vanish. Using the dofs
\eqref{Z1dof1}--\eqref{Z1dof5}, the inclusion $Z_r^1(\Ta) \subset
V_r^1(\Ta),$ and Remark \ref{remark121}, we conclude that  $\omega$
vanishes on $\partial T$. From \eqref{Z1dof1} and \eqref{Z1dof3} we
have that $\curl \omega$ vanishes on all the edges of $T$. Then using
\eqref{Z1dof6} we conclude that $\curl \omega \times n$ vanishes on
$\partial T$. Also, $\curl \omega \cdot n$ vanishes on $\partial T,$
since we have already established that $\omega$ vanishes on $\partial
T$. Thus $\curl \omega$ vanishes on $\partial T.$ Using the vertex
dofs again, we conclude that $\omega \in \mZ_r^1(\Ta)$. Now, the dofs
\eqref{Z1dof8} show that  $\curl \omega =0$ on $T$. By the exactness
property \eqref{mlocalZ}  and the dofs \eqref{Z1dof7} we conclude that $\omega \equiv 0$.
\end{proof}

\begin{theorem}
  \label{thm:PiZ}
  Let $\Pi_k^Z$ denote the canonical interpolant defined using
  the dofs of $Z_{r+1-k}^k(\Ta)$. Then, for $r\ge 5$,
  the following diagram commutes
  \[
    \begin{tikzcd}
      \mathbb{R} \ar{r}
      & C^\infty(T)  \ar{r}{\grad} \ar{d}{\Pi_0^Z}
      & \left[C^\infty(T)\right]^3  \ar{r}{\curl} \ar{d}{\Pi_1^Z}
      & \left[C^\infty(T)\right]^3 \ar{r}{\dive} \ar{d}{\Pi_2^Z}
      & C^\infty(T)  \ar{r} \ar{d}{\Pi_3^Z}
      & 0
      \\
      \mathbb{R} \ar{r}
      & Z_{r+1}^0(\Ta) \ar{r}{\grad} 
      & Z_{r}^1(\Ta) \ar{r}{\curl}
      & Z_{r-1}^2(\Ta) \ar{r}{\dive}
      & Z_{r-2}^3(\Ta) \ar{r}
      & 0. 
    \end{tikzcd}
  \]
\end{theorem}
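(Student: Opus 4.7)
The plan is to prove the three commutativity relations $\grad \Pi_0^Z = \Pi_1^Z \grad$, $\curl \Pi_1^Z = \Pi_2^Z \curl$, and $\dive \Pi_2^Z = \Pi_3^Z \dive$ one at a time by a standard ``dof-chasing'' argument: for each relation we form the difference $\eta$ of the two sides, which lies in the appropriate $Z$-space, and then verify that every degree of freedom listed in \eqref{Z1dofs}, \eqref{Z2dofs}, or \eqref{Z3dofs} vanishes on $\eta$, so that by Lemmas~\ref{lem:Z2unisolvency} and~\ref{lem:Z1unisolvency} (and unisolvency of $\Pi_3^Z$, which is immediate) we conclude $\eta \equiv 0$.

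For $\eta := \grad \Pi_0^Z u - \Pi_1^Z \grad u \in Z_r^1(\Ta)$, the vertex dofs~\eqref{Z1dof1} vanish because $\Pi_0^Z$ interpolates derivatives of $u$ up to order 2 at vertices (see \eqref{eqn:C1DOF1} with $r$ replaced by $r+1$), which supplies $D^\alpha \grad \Pi_0^Z u(a) = D^\alpha \grad u(a)$ for $|\alpha|\le 1$, while $\curl \grad = 0$ and the corresponding vertex moments in~\eqref{Z1dof1} applied to $\Pi_1^Z \grad u$ yield $D^\beta \curl \Pi_1^Z \grad u(a) = D^\beta \curl \grad u(a) = 0$. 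The edge and face moments~\eqref{Z1dof2}--\eqref{Z1dof6} are handled by splitting the test field $\kappa$ into its tangential and normal components relative to the subsimplex, and then using integration by parts along edges or Stokes' theorem on faces to reduce them to edge/face moments of $u$ that are controlled by the dofs of $\Pi_0^Z$ in~\eqref{eqn:C1DOF21}--\eqref{eqn:C1DOF32}. The interior moments~\eqref{Z1dof7} are immediate from the interior dofs~\eqref{eqn:C1DOF4} of $\Pi_0^Z$, and~\eqref{Z1dof8} vanishes because $\curl \grad \Pi_0^Z u = 0$ while $\int_T \curl \Pi_1^Z (\grad u) \cdot \kappa = \int_T \curl \grad u \cdot \kappa = 0$ for $\kappa \in \curl \mZ_r^1(\Ta)$.

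For $\eta := \curl \Pi_1^Z \bv - \Pi_2^Z \curl \bv \in Z_{r-1}^2(\Ta)$, vertex dofs~\eqref{Z2dof1} agree because \eqref{Z1dof1} controls $D^\alpha \curl \Pi_1^Z \bv(a)$ for $|\alpha|\le 1$ (from matching $\bv$ and its first derivatives together with $D^\beta \curl$) while~\eqref{Z2dof1} gives the analogous identity for $\Pi_2^Z \curl \bv$. Edge dofs~\eqref{Z2dof2} use~\eqref{Z1dof3} directly. Face dofs~\eqref{Z2dof3} require splitting $\kappa \in [\pol_{r-4}(F)]^3$ into its normal and tangential parts: the normal part is handled by the surface identity $(\curl \omega)\cdot n = \curlF(\omega_t)$ together with an integration by parts on $F$, which after boundary contributions (matched via already-verified edge dofs) reduces to the tangential face dof~\eqref{Z1dof5}, while the tangential part is immediate from~\eqref{Z1dof6}. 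The interior dofs~\eqref{Z2dof4} and~\eqref{Z2dof5} match by~\eqref{Z1dof8} and by the identity $\dive \circ \curl = 0$ combined with~\eqref{Z2dof5} applied to $\Pi_2^Z \curl \bv$.

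For $\zeta := \dive \Pi_2^Z u - \Pi_3^Z \dive u \in Z_{r-2}^3(\Ta)$, the vertex values vanish because~\eqref{Z2dof1} makes $\dive \Pi_2^Z u(a) = \dive u(a)$ and the vertex dof of $Z^3$ gives $\Pi_3^Z \dive u(a) = \dive u(a)$. The mean-value dof and interior moments are the cleanest step: using~\eqref{Z2dof3} with $\kappa \equiv n$ on each face of $\partial T$ gives $\int_{\partial T} \Pi_2^Z u \cdot n = \int_{\partial T} u \cdot n$, producing $\int_T \zeta = 0$, and the interior moments $\int_T \zeta \kappa$ for $\kappa \in \mZ_{r-2}^3(\Ta)$ vanish because Lemma~\ref{lem:Zo_exactnesssec} gives $\dive \mZ_{r-1}^2(\Ta) = \mZ_{r-2}^3(\Ta)$, allowing us to apply~\eqref{Z2dof5}. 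The main obstacle is carrying out the tangential/normal decompositions on edges and faces cleanly in the $\grad$--$\curl$ step, in particular verifying that the surface-curl of a test polynomial $\kappa_n \in \pol_{r-4}(F)$ lies in the Raviart--Thomas space $D_{r-4}(F)$ used in~\eqref{Z1dof5}; once this bookkeeping is set up, the rest of the proof closely parallels the analogous commuting-projection argument already carried out for the $V$ complex in \cite{fu2018exact} and used in Theorem~\ref{thm:PiV-local}.
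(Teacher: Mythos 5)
Your proposal is correct and follows essentially the same dof-chasing route as the paper's proof: form the difference in each square, check that every dof in \eqref{Z1dofs}, \eqref{Z2dofs}, \eqref{Z3dofs} vanishes on it via the same tangential/normal splittings and integrations by parts against the dofs of the preceding space, and invoke Lemmas~\ref{lem:Z1unisolvency} and~\ref{lem:Z2unisolvency} (the paper writes out the first two squares in detail and leaves the divergence square, which you supply, as ``similar''). The one bookkeeping point you flag is harmless: for $\kappa_n\in\pol_{r-4}(F)$ one has $\rotF\kappa_n\in[\pol_{r-5}(F)]^2\subseteq D_{r-4}(F)$, so the reduction to \eqref{Z1dof5} goes through.
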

\begin{proof}
  First, we show that for any $\rho \in C^\infty(T)$,
  $ \grad \Pi_0^Z \rho= \Pi_1^Z \grad \rho.$ We do this by showing
  that all dofs of \eqref{Z1dofs} vanish when applied to
  $u= \grad \Pi_0^Z \rho- \Pi_1^Z \grad \rho \in Z_{r}^1(\Ta)$ and
  applying Lemma~\ref{lem:Z1unisolvency}.

  Using the vertex dofs of $Z^0_{r+1}(\Ta)$---see
  \eqref{eqn:C1DOF1}---and \eqref{Z1dof1}, together with
  $\curl \grad=0,$ we see that the dofs of~\eqref{Z1dof1} vanish on
  $u$.  To show that the dofs of \eqref{Z1dof2} applied to $u$ on 
  edge $e$, namely
  $\int_e u \cdot \kappa \,{\ds},$ also vanish, we split
  $\kappa \in \pol_{r-4}(e)^3$ into tangential and normal components
  $\kappa = \kappa_e t_e + \kappa_+n^+_e + \kappa_-n_e^-$ and proceed.
  By~\eqref{eqn:Sigmadofs} with $r+1$ in place of $r$, 
\begin{align*}
  \int_e  \grad (\Pi_0^Z \rho) \cdot  n_e^\pm \, \kappa_\pm\,{\ds}
  & =\int_e  \grad \rho \cdot  n_e^\pm \, \kappa_\pm\,{\ds},
  && \text{by~\eqref{eqn:C1DOF22}},\\
  \int_e  \grad (\Pi_0^Z \rho-\rho) \cdot  t_e \; \kappa_e\,{\ds}
  & =-\int_e  (\Pi_0^Z \rho-\rho) \; \partial_{t_e} \kappa_e
    \,{\ds}=0,
  &&  \text{by~\eqref{eqn:C1DOF1}--\eqref{eqn:C1DOF21}}.
\end{align*}
Together with~\eqref{Z1dof2}, we conclude that
$\int_e u \cdot \kappa \,{\ds} = \int_e(\grad \Pi_0^Z \rho-
\Pi_1^Z \grad \rho)\cdot \kappa \,{\ds} = 0.$
Proceeding to the dofs of  \eqref{Z1dof3} applied to $u$, for any
$\kappa\in [\pol_{r-5}(e)]^3$, 
\begin{equation*}
\int_e (\curl  u)\cdot \kappa\, {\ds}= -\int_e (\curl \Pi_1^Z \grad \rho)\cdot \kappa\, {\ds}=-\int_e (\curl \grad \rho)\cdot \kappa\, {\ds}=0.
\end{equation*}
Thus all vertex and edge dofs vanish when applied to $u$.

Next, consider the face and inner dofs. Let 
$w_{\F} = {n^\F} \times (w \times n^{\F})$ denote the tangential component of a
vector field $w$ on $F$, let $\gradF, \curlF, \rotF,$ and $\divF$ denote
the standard surface differential operators on $F$, and let $\nu^{\F}$
denote the outward unit normal on $\partial F$ lying in the tangent
plane of $F$.
It is easy to see that the dofs of \eqref{Z1dof4}  vanish on $u$
using~\eqref{eqn:C1DOF32} with $r+1$ in place of $r$. For the dofs in
\eqref{Z1dof5}, let $\kappa\in {D}_{r-4}(F)$ and let $\nu^{\F}$ denote
the outward unit normal on $\partial F$ lying in the tangent plane of
$F$. By the properties of the Raviart-Thomas space, $\nu^{\F} \cdot
\kappa|_e$ is in $\pol_{r-5}(e)$.  
Applying  \eqref{Z1dof5} and then integrating by parts,
\begin{align*}
  \int_F  u_{\F} \cdot \kappa \, {\dA}
   &= \int_F  \gradF (\Pi_0^Z \rho-\rho) \cdot
    \kappa\, {\dA}
    \\
   & =-\int_F (\Pi_0^Z \rho-\rho)  \cdot \divF \kappa\,
    {\dA}
    +\int_{\partial F} (\Pi_0^Z \rho-\rho)  \nu^{\F} \cdot \kappa\,
    {\ds}. 
\end{align*}
Applying~\eqref{eqn:Sigmadofs} with $r+1$ in place of $r$, the last
term above vanishes due to~\eqref {eqn:C1DOF21} and the penultimate
term vanishes due to \eqref{eqn:C1DOF31}.  Hence the dofs of
\eqref{Z1dof5} applied to $u$ vanish.  The dofs of \eqref{Z1dof6} and
\eqref{Z1dof8} applied to $u$ are, of course, zero simply because
$\curl \grad$ vanishes. Finally, the dofs of \eqref{Z1dof7} applied to
$u$ yield zero due to \eqref{eqn:C1DOF4}.

Let us proceed to show the second commuting diagram property, namely
$ \curl \Pi_1^Z \rho= \Pi_2^Z \curl \rho$ for all
$\rho \in [C^\infty(T)]^3.$ Putting
$u=\curl \Pi_1^Z \rho- \Pi_2^Z \curl \rho \in Z_{r-1}^2(\Ta)$, we now
show that all dofs in~\eqref{Z2dofs} vanish on $u$. (Then the result
follows from Lemma~\ref{lem:Z2unisolvency}.) It is easy to see that
the vertex dofs applied to $u$ are zero due to \eqref{Z1dof1} and
\eqref{Z2dof1}, and that the edge dofs applied to $u$ are zero due to
\eqref{Z1dof3} and~\eqref{Z2dof2}.  For the face dofs applied to $u$,
namely $\int_F u \cdot \kappa\, {\ds}$, we proceed by splitting
$\kappa$ into its normal component $\kappa_n n^{\F}$ and the remaining
tangential component $\kappa_t = n \times (\kappa \times n)$. The
latter gives
\begin{align*}
\int _F u \cdot \kappa_t  = \int_F   u \times n^{\F} \cdot \kappa
  \times n^{\F} =
  \int_F (\curl \Pi_1^Z \rho) \times n^{\F} \cdot \kappa \times n^{\F}
  - \int_F (\Pi_2^Z \curl \rho ) \times n^{\F} \cdot  \kappa \times n^{\F}.
\end{align*}
Applying \eqref{Z2dof3}  to the last term and \eqref{Z1dof6} to the
penultimate term, the result is zero. To show that the face
dofs with the normal component are also zero, we use~\eqref{Z2dof3}
and integrate by parts:
\begin{align*}
  \int_F u \cdot  \kappa_nn^{\F}
  & = \int_F \curl (\Pi_1^Z \rho - \rho) \cdot n^{\F} \kappa_n
    = \int_F \curlF (\Pi_1^Z \rho - \rho)_{\F}
    \, \kappa_n
    = \int_F  ( \Pi_1^Z \rho - \rho)_{\F}
    \, \rotF\kappa_n,  
\end{align*}
where the boundary terms arising from the integration by parts were
zeroed out due to~\eqref{Z1dof2}. The last term above vanishes
by~\eqref{Z1dof5}. Hence all the face dofs
$\int_F u \cdot \kappa\, {\ds}$ vanish. It is easy to see that
the inner dofs in~\eqref{Z2dof4} and~\eqref{Z2dof5} applied to $u$
also yield zero. Hence $u=0$. 

The final commuting diagram property stated in the theorem,
$ \dive \Pi_2^Z \rho= \Pi_3^Z \dive \rho, $ for all
$\rho \in [C^\infty(T)]^3,$ is also proved using similar arguments
using the previously established lemmas.
\end{proof}

\section{Local elasticity complexes}\label{sec:local}

\subsection{Derived exact sequences}

The two exact sequences of spaces, \eqref{localV} and \eqref{localZ},
that we have developed in the previous sections can now be put
together to deduce an elasticity sequence. We do so  by applying
Proposition~\ref{prop:bgg_basic}. To this end, we need connecting
operators between the sequences as in~\eqref{eq:5}.
Let $\M$ denote the space of $3\times 3$
matrices. Define $\Xi : \M \to \M$ by 
$
\Xi M=M' -\text{tr}(M) \mathbb{I},
$
where $\mathbb{I}$ denotes the identity matrix and $(\cdot)'$ denotes the transpose. We note that this operator is invertible and 
\begin{equation}
  \label{eq:8}
\Xi^{-1} M=M'-\frac{1}{2}\text{tr}(M) \mathbb{I}.
\end{equation}
As usual, we let
$  \text{sym } M= \frac{1}{2}(M+M'),$
  $  \text{skw } M= \frac{1}{2}(M-M'),$
and put $\K = \skw(\M)$ and $\SSS = \sym(\M)$.
We define  $\mskw : \V \to \K$ by
$$
 \text{mskw} \begin{pmatrix}
v_1 \\
v_2 \\
v_3
\end{pmatrix}
 =
  \begin{pmatrix}
0  &  -v_3 & v_2 \\
v_3 & 0 & -v_1 \\
-v_2& v_1 & 0 
\end{pmatrix}
$$
and set  $\text{vskw}= \text{mskw}^{-1} \circ \text{skw }$. It is easy
to check that the following two identities hold:
\begin{gather}
\label{alg1}
  \dive \Xi= 2\text{vskw } \curl,
  \\
  \label{alg2}
  \Xi \grad=-\curl \text{ mskw}.
\end{gather}
These identities imply that the following diagram commutes:
\begin{equation}\label{2level-diagram}
\begin{tikzcd}
Z_{r+1}^0(\Ta) \otimes {\V}\arrow{r}{\grad} & Z_{r}^1(\Ta)
\otimes {\V} \arrow{r}{\curl} & Z_{r-1}^2(\Ta)  \otimes
{\V}\arrow{r}{\dive} &  Z_{r-2}^3(\Ta) \otimes
{\V}
\\
V_{r}^0(\Ta) \otimes {\V}\arrow{r}{\grad}\arrow[ur,
"-\mathrm{mskw}"]& V_{r-1}^1(\Ta) \otimes {\V}
\arrow{r}{\curl}\arrow[ur, "\Xi"] &V_{r-2}^2(\Ta) \otimes
{\V} \arrow{r}{\dive}\arrow[ur, "2\mathrm{vskw}"] &
V_{r-3}^3(\Ta) \otimes {\V}.
 \end{tikzcd} 
\end{equation}

\begin{lemma}\label{lem:skw-onto}
   Both 
   $\vskw: \mV_{r-2}^2(\Ta)\otimes {\V}\rightarrow \hat
   Z_{r-2}^3(\Ta)\otimes {\V}$ and
   $\vskw: V_{r-2}^2(\Ta)\otimes {\V}\rightarrow 
   Z_{r-2}^3(\Ta)\otimes {\V}$ 
   are surjective operators. 
\end{lemma}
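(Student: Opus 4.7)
The plan is to construct, for any given $v \in \hat Z^3_{r-2}(\Ta)\otimes \V$, a matrix field $M \in \mV^2_{r-2}(\Ta)\otimes \V$ with $\vskw(M) = v$; the argument for the unreduced version is parallel. Start from the pointwise right inverse $v \mapsto \mskw(v)$ of $\vskw$, which satisfies $\vskw(\mskw(v)) = v$ because $\mskw(v)$ is already skew. However, the tentative preimage $M_0 := \mskw(v)$ generally fails to lie in $\mV^2_{r-2}(\Ta)\otimes \V$: its rows $(0,-v_3,v_2)$, $(v_3,0,-v_1)$, $(-v_2,v_1,0)$ have components in the broken space $W^3_{r-2}(\Ta) \supset \hat Z^3_{r-2}(\Ta)$, which can jump across the interior facets of $\Ta$, so the rows typically violate $H(\div)$-conformity and the vanishing normal trace on $\partial T$ required for membership in $\mathring W^2_{r-2}(\Ta)$.

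The plan is then to correct $M_0$ by a pointwise symmetric matrix field $S$, so that $M := M_0 + S \in \mV^2_{r-2}(\Ta)\otimes \V$. Since $\vskw S = 0$, this immediately gives $\vskw M = v$. A crucial first observation is that because each $v_k \in \hat Z^3_{r-2}(\Ta)$ vanishes at every vertex of $T$, the matrix $M_0$ already vanishes at each vertex, so the vertex condition built into $\mV^2_{r-2}(\Ta)$ needs no correction. The remaining task is therefore purely about restoring the normal-continuity of rows across the three interior facets and the vanishing normal trace on $\partial T$, using a pointwise symmetric perturbation.

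To produce such an $S$, I would work row by row and exploit the fact that the symmetric part of the $3\times 3$ matrix offers three independent diagonal entries together with three off-diagonal pairs $S_{ij} = S_{ji}$, which are completely unconstrained by the value of $v$. Using the decomposition $V^2_{r-2}(\Ta) = \mV^2_{r-2}(\Ta) \oplus W^{\partial,2}_{r-2}(T)$ from Lemma~\ref{lem:V_Vo} and the degrees of freedom of $\mV^2_{r-2}(\Ta)$ encoded by~\eqref{eqn:StenbergHDiv}, I would pick $S$ whose rows are elements of $\mV^2_{r-2}(\Ta)$ whose facet normal traces cancel exactly those of the corresponding rows of $M_0$. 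The unreduced statement $\vskw : V^2_{r-2}(\Ta)\otimes \V \to Z^3_{r-2}(\Ta)\otimes \V$ then follows by applying the same construction after separating off the boundary piece via Lemma~\ref{lem:V_Vo}: the boundary dofs on $V^2_{r-2}$ can directly supply any required traces, and the interior (bubble) part is handled exactly as above.

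The main obstacle I expect is bookkeeping: verifying that, for each row of $M_0$, the normal-trace defects on internal facets and on $\partial T$ can be exactly matched by the normal traces of symmetric entries in $\mV^2_{r-2}(\Ta)\otimes \V$, i.e., that the symmetric subspace is rich enough to absorb the correction. A cleaner alternative route, which I would pursue if the direct lifting becomes unwieldy, is a rank–nullity / dimension count: show that $\vskw$ maps $\mV^2_{r-2}(\Ta)\otimes \V$ into $\hat Z^3_{r-2}(\Ta)\otimes \V$ (as already noted above), compute $\dim \ker(\vskw|_{\mV^2_{r-2}(\Ta)\otimes \V})$ — the space of symmetric matrix fields with rows in $\mV^2_{r-2}(\Ta)$ — using the formulas of Lemma~\ref{lem:dimVoV}, and compare with $\dim \hat Z^3_{r-2}(\Ta)\otimes \V$ obtained from the formulas in the proof of Lemma~\ref{lem:dimZ}, concluding that the image must coincide with the target.
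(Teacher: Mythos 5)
Your reduction to finding a pointwise\textendash symmetric correction $S$ with prescribed normal traces is where the argument breaks down, and it is not mere bookkeeping. Prescribing $Sn$ on the facets of each subtetrahedron for a \emph{symmetric} piecewise polynomial matrix field is exactly the notorious difficulty behind $H(\div,\mathbb{S})$-conforming elements: at an edge $e=f_1\cap f_2$ with normals $n_1,n_2$, symmetry forces the compatibility condition $(Sn_1)\cdot n_2=(Sn_2)\cdot n_1$ on $e$, with further constraints at the interior vertex $z$ (where all interior facets meet) and at the $x_i$. You never verify that your trace data, $-\jump{v}\times n$ on interior facets and $-v\times n$ on $\partial T$, satisfies these conditions, nor that a symmetric field of degree $r-2$ realizing it exists; the claim that ``the symmetric subspace is rich enough to absorb the correction'' is essentially the content of the lemma, not a step toward it. Your fallback dimension count is circular for the same reason: $\ker(\vskw|_{\mV_{r-2}^2(\Ta)\otimes\V})$ is precisely $\mU_{r-2}^2(\Ta)$, whose dimension the paper obtains in Lemma~\ref{lem:Udims} \emph{from} the surjectivity you are trying to prove, and Lemma~\ref{lem:dimVoV} only gives the row-wise (unsymmetrized) dimensions, which do not determine the dimension of the symmetric subspace.

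The missing idea is the algebraic identity \eqref{alg1}, $\dive\Xi=2\vskw\curl$. The paper first picks any $\tilde v\in\mV_{r-2}^2(\Ta)\otimes\V$ whose image matches the mean of the target $z$, so that $z-2\vskw\tilde v$ lies in $\mZ_{r-2}^3(\Ta)\otimes\V$; by the exactness of \eqref{mlocalZ} there is a $w\in\mZ_{r-1}^2(\Ta)\otimes\V$ with $\dive w=z-2\vskw\tilde v$, and then $v=\curl\Xi^{-1}w+\tilde v$ does the job, since $\Xi^{-1}w\in\mV_{r-1}^1(\Ta)\otimes\V$ by \eqref{z2=v1} and $\curl$ maps this space into $\mV_{r-2}^2(\Ta)\otimes\V$. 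This converts the surjectivity of $\vskw$ into the already-established surjectivity of $\dive$ in the auxiliary $\mZ$ sequence and avoids any direct construction of symmetric fields with prescribed normal traces. If you want to salvage your approach, you would need to supply that construction, which amounts to building an $H(\div,\mathbb{S})$ element on the Alfeld split from scratch.
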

\begin{proof}
  Given any $z\in \hat Z_{r-2}^3(\Ta)\otimes {\V}$, we must find some
  $v\in \mV_{r-2}^2(\Ta)\otimes {\V}$ such that $2\vskw v=z$. First,
  we take any $\tilde{v}\in \mV_{r-2}^2(\Ta)\otimes {\V}$ satisfying
  $\int_{T}2\vskw\tilde{ v}\, {\dx}=\int_{T}z\,
  {\dx}$. Then consider
  $z-2\vskw \tilde{ v}\in \mZ_{r-2}^{3}(\Ta)\otimes {\V}$. Due to the
  exactness of the $\mZ$ sequence \eqref{mlocalZ}, there exists
  $w\in \mZ_{r-1}^{2}(\Ta)\otimes {\V}$ such that
  $\div w=z-2\vskw \tilde{ v}$.  Then $ v=\curl\Xi^{-1}w+\tilde{ v}$
  is in $ \mV_{r-1}^{2}\otimes {\V}$ and $2\vskw( v)=z$
  by~\eqref{alg1}. The proof of the other surjectivity is similar and
  easier.
\end{proof}

\begin{theorem} \label{thm:elasticitysequence}
  The sequence \vspace{-0.3cm}  
  \[ 
    \begin{tikzcd}
      \!\!
      \begin{bmatrix}
        Z_{r+1}^0(\Ta) \!\otimes\! {\V} \\
        V_{r}^0(\Ta) \!\otimes\! {\V}
      \end{bmatrix}\!\!
      \arrow{r}{
        \left[\begin{smallmatrix}
            \!\grad\!, \; -\!\mskw \!
          \end{smallmatrix}\right]}
      &[3.2em]
      \!\! Z_{r}^1(\Ta)  \!\otimes\! {\V} \!\!
      \arrow{r}{\curl \Xi^{-1} \curl}
      &[2.6em] 
      \!\!V_{r-2}^2(\Ta) \!\otimes\! {\V}\!\!
      \arrow{r}{
        \begin{bmatrix}
          \!2\! \vskw\!\! \\ \dive 
        \end{bmatrix}
      }
      &[0.9em]
      \begin{bmatrix}
        Z_{r-2}^3(\Ta) \!\otimes\! {\V} \\
        V_{r-3}^3(\Ta) \!\otimes\! {\V}
      \end{bmatrix}
    \end{tikzcd} 
  \]
  is exact for $r \ge 4$. Moreover, the last operator is surjective. 
\end{theorem}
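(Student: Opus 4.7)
The plan is to apply Proposition~\ref{prop:bgg_basic} directly, recognizing the theorem's sequence as a standard BGG construction built from the two complexes developed in the preceding sections. I would set $\Zc_i = Z_{r+1-i}^i(\Ta)\otimes\V$ for the top complex and $\Vc_i = V_{r-i}^i(\Ta)\otimes\V$ for the bottom, with the usual differential operators as the horizontal arrows, and take the connecting maps to be $s_0 = -\mskw$, $s_1 = \Xi$, and $s_2 = 2\vskw$, exactly as in diagram~\eqref{2level-diagram}. Under these identifications, the three arrows appearing in the theorem are precisely $[r_0,\,s_0]$, the composite $t_1 \circ s_1^{-1} \circ r_1 = \curl\,\Xi^{-1}\,\curl$, and $\left[\begin{smallmatrix} s_2 \\ t_2 \end{smallmatrix}\right]$.

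Next I would verify the hypotheses of part~\ref{item:1:prop:bgg_basic} of Proposition~\ref{prop:bgg_basic}. Exactness of the top $\Zc$-complex is supplied by Lemma~\ref{lem:Z_exact}, valid for $r \ge 4$, and of the bottom $\Vc$-complex by Lemma~\ref{lem:exactV}, valid for $r \ge 3$. The commutativity $r_{i+1} s_i = s_{i+1} t_i$ for $i=0,1$ is exactly the content of the algebraic identities~\eqref{alg1} and~\eqref{alg2}, which is precisely what is exhibited in diagram~\eqref{2level-diagram}. Bijectivity of $s_1 = \Xi$ follows at once from the explicit inverse~\eqref{eq:8}, since $\Xi$ acts pointwise as an invertible linear endomorphism of~$\M$. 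These verifications deliver exactness of the displayed sequence at its two interior positions.

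For the surjectivity of the terminal map $\left[\begin{smallmatrix} 2\vskw \\ \dive \end{smallmatrix}\right]$, I would invoke part~\ref{item:2:prop:bgg_basic} of the same proposition. The surjectivity of $r_2 = \dive$ on the $\Zc$-side is part of Lemma~\ref{lem:Z_exact}, and of $t_2 = \dive$ on the $\Vc$-side part of Lemma~\ref{lem:exactV}; the complex property $\dive\circ\curl = 0$ is trivial, and the commutativity $s_2 t_1 = r_2 s_1$ is once more~\eqref{alg1}. Since all of the substantial work has been carried out in the construction of the two smoother de~Rham-type complexes of Sections~\ref{sec:first}--\ref{sec:second}, I do not anticipate any genuine obstacle; the argument is essentially bookkeeping within the BGG framework, with only minor care needed to keep the degree shifts between the $\Zc$- and $\Vc$-sequences aligned.
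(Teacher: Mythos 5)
Your proposal follows the paper's proof essentially verbatim: the paper also reduces the theorem to Proposition~\ref{prop:bgg_basic} applied to diagram~\eqref{2level-diagram}, citing Lemma~\ref{lem:Z_exact} ($r\ge 4$) and Lemma~\ref{lem:exactV} ($r\ge 3$) for the exactness of the two rows and the identities \eqref{alg1}--\eqref{alg2} for commutativity; for the surjectivity of the last map the paper routes through Lemma~\ref{lem:skw-onto} together with Proposition~\ref{prop:bgg_basic}, whereas your direct appeal to item~(\ref{item:2:prop:bgg_basic}) using only the surjectivity of the two divergences is equally valid (indeed that is all the proof of item~(\ref{item:2:prop:bgg_basic}) uses).

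The one point where your justification is genuinely incomplete is the bijectivity of $s_1=\Xi : V_{r-1}^1(\Ta)\otimes\V \to Z_{r-1}^2(\Ta)\otimes\V$. The explicit pointwise inverse \eqref{eq:8} shows that $\Xi$ is an invertible endomorphism of $\M$ at each point, hence injective on any space of matrix fields, but it does not by itself show that $\Xi$ maps $V_{r-1}^1(\Ta)\otimes\V$ \emph{onto} $Z_{r-1}^2(\Ta)\otimes\V$ --- a priori the image could be a proper subspace. What closes this is the set identity \eqref{z2=v1}, $Z_{r-1}^2(\Ta)=V_{r-1}^1(\Ta)$, together with the fact that the constant-coefficient maps $\Xi$ and $\Xi^{-1}$ preserve the defining continuity and vertex-smoothness conditions of that common space. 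This coincidence of the two finite element spaces of matching position in the diagram is precisely the design feature that makes the discrete BGG construction go through (it is the discrete analogue of the ``matching'' discussed in the introduction), so it should be cited explicitly rather than absorbed into ``pointwise invertibility.'' With that reference added, your argument is complete and coincides with the paper's.
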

\begin{proof}
  Identities~\eqref{eq:8} and \eqref{z2=v1} imply that
  $\Xi: V_{r-1}^1(\Ta)\otimes \V \rightarrow Z_{r-1}^2(\Ta) \otimes
  \V$ is a bijection. The exactness of the top and bottom sequences
  in~\eqref{2level-diagram} were established in
  Lemmas~\ref{lem:Z_exact} and \ref{lem:exactV}, for $r\ge 4$ and
  $r\ge 3$, respectively.  Hence
  the proof of the stated exactness reduces to an application of
  Proposition~\ref{prop:bgg_basic}. The statement on surjectivity of 
  the last map also follows from Proposition~\ref{prop:bgg_basic}
  after using Lemma~\ref{lem:skw-onto}.
\end{proof}

Note that the kernel of $[\grad, -\mskw]$ is
$\{(a-b\wedge x, -b) : a, b\in {\V}\}$.  Indeed, let
$(u, v) \in\ker [\grad, -\mskw]$. Then $\grad u-\mskw v=0$ implies
that ${\varepsilon}(u):=\sym\grad u=0$.  Therefore
$\left .u\right|_{T}=a_{T}+b_{T}\wedge x$ is an infinitesimal rigid
body motion for some constant vectors $a_{T}$ and $b_{T}$ on each
$T\in \Ta$. For any face $F=\partial T_{1}\cap \partial T_{2}$ with
$T_{1}, T_{2}\in \Ta$, if we choose the origin of the Euler vector
field $x$ on $F$, then $a_{T_{1}}=a_{T_{2}}$, $b_{T_{1}}=b_{T_{2}}$
due to the $C^{0}$ continuity of $u$. This implies that
$\left .u\right|_{T_{1}}$ and $\left .u\right|_{T_{2}}$ are the
restriction of the same affine vector field to $T_{1}$ and $T_{2}$,
respectively.  Therefore globally $u=a+b\wedge x$, for some constant
vectors $a, b \in {\V}$. Then we obtain $v=-b$.  The
conclusion also follows from a general algebraic construction (c.f.,
\cite{arnold2020complexes}). With this algebraic machinery, one can
show that the cohomology of the elasticity complex of
Theorem~\ref{thm:elasticitysequence} is isomorphic to the product of the
cohomology of the $Z$-complex and the $V$-complex. In particular, one
obtains the kernel of $[\grad, -\mskw]$ to be isomorphic to
${\V}\times {\V}$ since the kernel of
$\grad: Z_{r+1}^0(\Ta) \otimes {\V} \to Z_{r}^1(\Ta) \otimes
{\V}$ and the kernel of
$\grad: V_{r}^0(\Ta) \otimes {\V} \to V_{r-1}^1(\Ta) \otimes
{\V}$ are both equal to ${\V}$.

An analogous result holds for the spaces with boundary conditions. To
see this, first note that instead of \eqref{2level-diagram}, we now
have the following commuting diagram:
\[
\begin{tikzcd}[column sep=small]
  \mZ_{r+1}^0(\Ta) \!\otimes\! {\V}\arrow{r}{\grad}
  &[0.6em] \mZ_{r}^1(\Ta)
  \!\otimes\! {\V} \arrow{r}{\curl}
  &[0.6em] \mZ_{r-1}^2(\Ta)  \!\otimes\!
  {\V}\arrow{r}{\dive}
  &[0.5em]  \hat{Z}_{r-2}^3(\Ta) \!\otimes\!
  {\V} \arrow{r}{\int}
  &  \V 
  \\
  \mV_{r}^0(\Ta) \!\otimes\! {\V}\arrow{r}{\grad}\arrow[ur,
  "-\mathrm{mskw}"]& \mV_{r-1}^1(\Ta) \!\otimes\! {\V}
  \arrow{r}{\curl}\arrow[ur, "\Xi"] &\mV_{r-2}^2(\Ta) \!\otimes\!
  {\V} \arrow{r}{\dive}\arrow[ur, "2\mathrm{vskw}"] &
  \mV_{r-3}^3(\Ta) \!\otimes\! {\V}.
\end{tikzcd} 
\]
Here we have modified the $\mZ$ sequence slightly
from~\eqref{lem:Zo_exactnesssec}. We have used the space
$\hat Z^3_r(\Ta)$ defined in~\eqref{eq:Zhat} since
$\vskw(\mV_{r-2}^2(\Ta) \otimes {\V})$ is generally
contained only in $\hat{Z}_{r-2}^3(\Ta) \otimes {\V}$, not
$\mZ_{r-2}^3(\Ta) \otimes {\V}$. The last operator in the
top sequence is just $z \mapsto \int_T z \,dx$ and the sequence is
exact even with this modification. Hence the proof of the next result
is completely analogous to that of
Theorem~\ref{thm:elasticitysequence}. (Note that surjectivity
of the last map is not claimed in the theorem.)

\begin{theorem} \label{thm:elasticitysequence-0}
  The following sequence is exact for $r \ge 4$: 
  \[ 
    \begin{tikzcd}
      \!\!
      \begin{bmatrix}
        \mZ_{r+1}^0(\Ta) \!\otimes\! {\V} \\
        \mV_{r}^0(\Ta) \!\otimes\! {\V}
      \end{bmatrix}\!\!
      \arrow{r}{
        \left[\begin{smallmatrix}
            \!\grad\!, \; -\!\mskw \!
          \end{smallmatrix}\right]}
      &[3.2em]
      \!\! \mZ_{r}^1(\Ta)  \!\otimes\! {\V} \!\!
      \arrow{r}{\curl \Xi^{-1} \curl}
      &[2.6em] 
      \!\!\mV_{r-2}^2(\Ta) \!\otimes\! {\V}\!\!
      \arrow{r}{
        \begin{bmatrix}
          \!2\! \vskw\!\! \\ \dive 
        \end{bmatrix}
      }
      &[0.9em]
      \begin{bmatrix}
        \hat{Z}_{r-2}^3(\Ta) \!\otimes\! {\V} \\
        \mV_{r-3}^3(\Ta) \!\otimes\! {\V}
      \end{bmatrix}.
    \end{tikzcd} 
  \]
\end{theorem}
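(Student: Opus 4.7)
The plan is to apply item~1 of Proposition~\ref{prop:bgg_basic} to the commuting diagram displayed just above the theorem, with the (modified) $\mZ$-sequence ending in $\hat Z^3_{r-2}(\Ta)\otimes \V$ as the top row and the $\mV$-sequence as the bottom row. The connecting operators are $-\mskw$, $\Xi$, and $2\vskw$, exactly as in the proof of Theorem~\ref{thm:elasticitysequence}, and the objective is to feed the right hypotheses into the proposition.

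Three ingredients are required. First, commutativity of the diagram is immediate from the algebraic identities~\eqref{alg1} and~\eqref{alg2}, which do not depend on boundary conditions. Second, the middle connecting operator must be a bijection: the pointwise invertibility~\eqref{eq:8} of $\Xi$ on matrices, combined with the identification $\mZ_{r-1}^2(\Ta)=\mV_{r-1}^1(\Ta)$ from~\eqref{z2=v1}, shows that $\Xi$ restricts to a bijection from $\mV_{r-1}^1(\Ta)\otimes \V$ onto $\mZ_{r-1}^2(\Ta)\otimes \V$ by acting componentwise. Third, both rows must be exact at their two middle spots. Exactness of the $\mV$-row at $\mV_{r-1}^1$ and $\mV_{r-2}^2$ is provided by Lemma~\ref{lem:exactVo}, which states that the subsequence starting from $\mV^1$ is exact already for $r\ge 3$, while exactness of the $\mZ$-row at $\mZ_r^1$ and $\mZ_{r-1}^2$ is provided by Lemma~\ref{lem:Zo_exactnesssec}, valid under the theorem's hypothesis $r\ge 4$.

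The one delicate point is the replacement of $\mZ^3_{r-2}(\Ta)\otimes \V$ by $\hat Z^3_{r-2}(\Ta)\otimes \V$ at the right end of the top row. One verifies that $2\vskw$ does send $\mV_{r-2}^2(\Ta)\otimes \V$ into $\hat Z^3_{r-2}(\Ta)\otimes \V$ simply because every element of $\mV_{r-2}^2(\Ta)\otimes \V$ vanishes at each vertex of $T$, so its skew part vanishes there too. Crucially, item~1 of Proposition~\ref{prop:bgg_basic} only uses exactness at the two \emph{middle} positions of each row, and $\ker(\dive : \mZ_{r-1}^2(\Ta)\otimes \V \to \hat Z^3_{r-2}(\Ta)\otimes \V)$ coincides with $\ker(\dive : \mZ_{r-1}^2(\Ta)\otimes \V \to \mZ^3_{r-2}(\Ta)\otimes \V)$; enlarging the target therefore leaves undisturbed the exactness that is actually needed. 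This enlargement would destroy any claim of surjectivity of the terminal map (since $\dive$ is only surjective onto $\mZ^3_{r-2}$, not $\hat Z^3_{r-2}$), which explains why the theorem makes no such claim.

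With these three ingredients in place, item~1 of Proposition~\ref{prop:bgg_basic} immediately delivers the two exactness assertions of the theorem. I do not anticipate a substantive obstacle: the hard work already lives in the previously established exactness of the two rows (Lemmas~\ref{lem:exactVo} and~\ref{lem:Zo_exactnesssec}) and in the algebraic identities~\eqref{alg1}--\eqref{alg2}. The only point demanding care is the small bookkeeping around $\hat Z^3_{r-2}$ versus $\mZ^3_{r-2}$ at the terminal node, as highlighted above.
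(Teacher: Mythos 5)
Your proposal is correct and follows essentially the same route as the paper: both apply item~1 of Proposition~\ref{prop:bgg_basic} to the boundary-condition commuting diagram, using the bijectivity of $\Xi$ via \eqref{z2=v1} and the row exactness from Lemmas~\ref{lem:exactVo} and~\ref{lem:Zo_exactnesssec}. Your handling of the $\hat{Z}_{r-2}^3$ endpoint (observing that only exactness at the two middle nodes is used, so enlarging the target of $\dive$ is harmless) is an equivalent, slightly more explicit version of the paper's remark that the modified top sequence, extended by $z\mapsto\int_T z$, remains exact.
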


\subsection{Discrete elasticity complex}

We now proceed to
define an elasticity complex useful for designing mixed methods with
strongly imposed symmetry.  To do this, we first note that
$\curl \Xi^{-1} \curl $ maps skew symmetric matrices to $0$. Indeed,
\begin{equation}\label{613}
 \curl \Xi^{-1} \curl \text{mskw}=-\curl \Xi^{-1}\, \Xi \grad=-\curl \grad=0.
\end{equation}
Also, note that $\curl \Xi^{-1} \curl$ maps all matrix fields to
symmetric matrix fields because
\begin{equation}\label{713}
 \vskw \curl \Xi^{-1} \curl=\frac{1}{2}\dive \Xi \, \Xi^{-1} \curl =\frac{1}{2} \dive \curl=0. 
\end{equation}
Our elasticity complexes will be formed using
the following spaces:
\begin{align*}
&U_{r+1}^0(\Ta)  = Z_{r+1}^0(\Ta) \!\otimes\! {\V}, 
 && \mU_{r+1}^0(\Ta)  = \mZ_{r+1}^0(\Ta) \!\otimes\! {\V},
\\
&U_{r}^1(\Ta) = 
\{\sym(u) :  u\in Z_{r}^{1}(\Ta)\!\otimes\! {\V}\},
&&
\mU_{r}^1(\Ta)  = 
\{\sym(u) :  u\in \mZ_{r}^{1}(\Ta)\!\otimes\! {\V}\},
\\
&U_{r-2}^2(\Ta)= \{ \omega \in V_{r-2}^{2}(\Ta)\!\otimes\! {\V}:
\text{skw } \omega=0 \}, 
&&
\mU_{r-2}^2(\Ta)= \{ \omega \in \mV_{r-2}^{2}(\Ta)\!\otimes\! {\V}:
\text{skw } \omega=0 \}, 
\\
&U_{r-3}^3(\Ta) =V_{r-3}^3(\Ta) \!\otimes\! {\V},
&&
   \mU_{r-3}^3(\Ta)
     =  {\{ u \in V_{r-3}^3(\Ta) \!\otimes\! {\V}: u \perp \RM \}}.
\end{align*}
Recall that $ \inc u = { \curl (\curl u)'}$ for a symmetric matrix
field $u.$ In fact, when $u$ is symmetric, 
$(\curl u)'=\Xi^{-1} \curl u$ as
\begin{equation}
  \label{eq:trcurl0forsym}
  \text{tr}(\curl u) =0,
\end{equation}
so $\inc u=\curl \Xi^{-1} \curl u$.  The elasticity complexes with the
newly defined $U$ spaces are as follows.
\begin{equation}
  \label{strong-symmetry-loc}
  \begin{tikzcd}[column sep=small]
    0\arrow{r}{}
    &
    \RM\arrow{r}{\subset}
    &[0.3em]
    U_{r+1}^0(\Ta)\arrow{r}{{\varepsilon}}
    &[0.3em]
    U_{r}^1(\Ta) \arrow{r}{\inc} 
    &[1em]
    U_{r-2}^2(\Ta) \arrow{r}{\dive}
    &[1em] U_{r-3}^3(\Ta)\arrow{r}{} &0,
  \end{tikzcd}
\end{equation}
Here is the analogue with
boundary conditions:
\begin{equation}
  \label{strong-symmetry-loc-0}
  \begin{tikzcd}[column sep=small]
    0\arrow{r}{}
    &
    \mU_{r+1}^0(\Ta)\arrow{r}{{\varepsilon}}
    &[0.5em]
    \mU_{r}^1(\Ta) \arrow{r}{\inc} 
    &[1.2em]
    \mU_{r-2}^2(\Ta) \arrow{r}{\dive}
    &[1.2em] \mU_{r-3}^3(\Ta)\arrow{r}{} &0.
  \end{tikzcd}
\end{equation}

\begin{theorem}\label{thm:strong-symmetry-loc}
  The sequences \eqref{strong-symmetry-loc} and
  \eqref{strong-symmetry-loc-0} are exact sequences for $r \ge 4$.
\end{theorem}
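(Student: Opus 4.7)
The plan is to reduce the two elasticity sequences to the BGG-derived exact sequences already established in Theorems~\ref{thm:elasticitysequence} and~\ref{thm:elasticitysequence-0}, restricting to symmetric parts of matrix fields and exploiting the algebraic identities \eqref{alg1} and \eqref{613}. First, $\ker(\veps)=\RM$ on $U^0_{r+1}$ by the standard characterization of infinitesimal rigid motions; on $\mU^0_{r+1}$ the boundary condition reduces this kernel to $\{0\}$.

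For exactness at $U^1_r$, given a symmetric $\sigma \in U^1_r$ with $\inc\sigma=0$, regard $\sigma$ as an element of $Z^1_r \otimes \V$; since $\tr(\curl\sigma)=0$ by \eqref{eq:trcurl0forsym}, one has $\inc\sigma=\curl\Xi^{-1}\curl\sigma=0$, so Theorem~\ref{thm:elasticitysequence} produces $(u,v) \in Z^0_{r+1}\otimes\V \oplus V^0_r\otimes\V$ with $\sigma = \grad u - \mskw v$. Taking the skew-symmetric part forces $\mskw v = \skw\grad u$, whence $v=\vskw\grad u$ and $\sigma=\sym\grad u=\veps u$. For exactness at $U^2_{r-2}$, a symmetric $\omega$ with $\dive\omega=0$ satisfies $(2\vskw\omega,\dive\omega)=(0,0)$, so Theorem~\ref{thm:elasticitysequence} yields $\omega=\curl\Xi^{-1}\curl\eta$ for some $\eta \in Z^1_r\otimes\V$; setting $\sigma:=\sym\eta \in U^1_r$, identity \eqref{613} annihilates the skew part of $\eta$, so $\inc\sigma=\curl\Xi^{-1}\curl\eta=\omega$. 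These arguments transcribe verbatim to the boundary setting via Theorem~\ref{thm:elasticitysequence-0}.

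Surjectivity of $\dive: U^2_{r-2}\to U^3_{r-3}$ follows by applying the BGG surjectivity to $(0,f) \in Z^3_{r-2}\otimes\V \oplus V^3_{r-3}\otimes\V$, producing $\omega$ with $2\vskw\omega=0$ (i.e., $\omega \in U^2$) and $\dive\omega=f$. The main obstacle is the surjectivity $\dive:\mU^2_{r-2}\to\mU^3_{r-3}$, which Theorem~\ref{thm:elasticitysequence-0} does not directly supply. For this step, given $f\in\mU^3_{r-3}$ (so $f\perp\RM$; in particular $\int_T f=0$ puts $f$ in $\mV^3_{r-3}\otimes\V$), proceed in two stages. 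Use Lemma~\ref{lem:exactVo} to find $\omega_0 \in \mV^2_{r-2}\otimes\V$ with $\dive\omega_0=f$, and set $g:=2\vskw\omega_0$. Lemma~\ref{lem:skw-onto} places $g$ in $\hat Z^3_{r-2}\otimes\V$, and integrating by parts using $\omega_0 n=0$ on $\partial T$ yields $\int_T \dive\omega_0 \cdot (b\times x) = -b\cdot\int_T 2\vskw\omega_0$ for all $b\in\V$; combined with $f\perp(b\times x)$, this forces $\int_T g=0$ and hence $g \in \mZ^3_{r-2}\otimes\V$. By Lemma~\ref{lem:Zo_exactnesssec}, pick $\zeta \in \mZ^2_{r-1}\otimes\V = \mV^1_{r-1}\otimes\V$ with $\dive\zeta=-g$, and define $\omega_1:=\curl\Xi^{-1}\zeta$. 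Identity \eqref{alg1} gives $2\vskw\omega_1 = \dive\Xi\,\Xi^{-1}\zeta = -g$, while $\dive\omega_1=0$ automatically; since $\Xi^{-1}$ preserves $\mV^1\otimes\V$ (as $M^\top$ and $\tr(M)I$ both lie there when $M$ does) and $\curl(\mV^1\otimes\V)\subseteq\mV^2\otimes\V$ by the $\mV$-exactness, $\omega_1 \in \mV^2_{r-2}\otimes\V$. Then $\omega:=\omega_0+\omega_1$ is symmetric, lies in $\mV^2_{r-2}\otimes\V$, and satisfies $\dive\omega=f$, so $\omega \in \mU^2_{r-2}$ as required.
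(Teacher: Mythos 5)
Your overall strategy is exactly the paper's: reduce everything to the BGG-derived exact sequences of Theorems~\ref{thm:elasticitysequence} and~\ref{thm:elasticitysequence-0}, and handle the one step they do not supply---surjectivity of $\dive:\mU^2_{r-2}\to\mU^3_{r-3}$---by the same two-stage correction (solve $\dive\omega_0=f$ in $\mV^2_{r-2}\otimes\V$, show $2\vskw\omega_0$ has zero mean by integrating against $b\times x$, then kill the skew part with a divergence-free correction $\curl\Xi^{-1}\zeta$ built from the $\mZ$ sequence via \eqref{alg1}). That part of your argument, as well as the exactness at $U^2_{r-2}$ and the surjectivity of $\dive$ in the no-boundary-condition case, is correct and coincides with the paper's proof essentially line for line.

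There is, however, one genuine error: in the exactness argument at $U^1_r$ you ``regard $\sigma$ as an element of $Z^1_r(\Ta)\otimes\V$'' in order to apply Theorem~\ref{thm:elasticitysequence} to $\sigma$ directly. That inclusion fails. Membership in $Z^1_r(\Ta)\otimes\V\subseteq S^1_r(\Ta)\otimes\V$ requires $\curl\sigma$ to be fully continuous (in $L^2_{r-1}(\Ta)\otimes\V$), whereas for $\sigma=\sym z$ with $z\in Z^1_r(\Ta)\otimes\V$ one has $\curl\sigma=\curl z+\Xi\grad\vskw z$ by \eqref{alg2}, and $\grad\vskw z$ is in general discontinuous across the interior faces of $\Ta$; Theorem~\ref{thm:charU} accordingly only guarantees $(\curl\sigma)'\in W^1_{r-1}(\Ta)\otimes\V$. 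The repair is short and is what the paper does: choose a preimage $z\in Z^1_r(\Ta)\otimes\V$ with $\sym z=\sigma$, note that \eqref{613} gives $\curl\Xi^{-1}\curl z=\curl\Xi^{-1}\curl\sigma=\inc\sigma=0$, apply Theorem~\ref{thm:elasticitysequence} to $z$ to get $z=\grad u-\mskw v$, and take symmetric parts to conclude $\sigma=\veps(u)$ (your subsequent skew-part bookkeeping then becomes unnecessary). You should also record, as the paper does, the preliminary verification that the maps land in the claimed spaces---in particular that $\inc$ maps $U^1_r(\Ta)$ into $U^2_{r-2}(\Ta)$, which uses \eqref{713}---since exactness presupposes that the sequence is a complex between the stated spaces.
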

\begin{proof}
  First we must show that~\eqref{strong-symmetry-loc} is a
  complex. By~\eqref{613},
  $\curl\Xi^{-1}\curl\circ {\varepsilon}=\curl\Xi^{-1}\curl\circ
  {\grad} = 0$. Also, it's obvious that
  $\div \circ \curl\Xi^{-1}\curl =0$. Hence it suffices to verify that the
  operators map into the right spaces.  Let $w \in
  U_{r+1}^0(\Ta)$. Then we have
  $\grad w \in Z_{r}^{1}(\Ta)\otimes {\V}$ and therefore
  ${\varepsilon} (w) \in U_{r}^1(\Ta)$.  Next, consider a
  $u \in U_{r}^1(\Ta)$. Then $\omega=\curl \Xi^{-1} \curl u $ has zero
  skew symmetric part due to \eqref{713}, so is in $ U_{r-2}^2(\Ta).$
  Finally, if
  $v \in U_{r-2}^2(\Ta) \subseteq V_{r-2}^2(\Ta) \otimes{\V}$, then
  $\dive v \in V_{r-3}^3(\Ta) \otimes {\V} = U_{r-3}^3(\Ta).$

  Now we prove exactness.  Let $u \in U_{r-3}^3(\Ta)$. We use 
  the surjectivity of the last map in the exact sequence of 
  Theorem~\ref{thm:elasticitysequence}. Accordingly, for
  $(0, u) \in [Z_{r-2}^3(\Ta) \otimes {\V}] \times [V_{r-3}^3(\Ta)
  \otimes {\V}]$,  there is a $w \in V_{r-2}^{2}(\Ta)\otimes
  {\V}$ such that $\dive w=u$ and $2 \text{vskw
  }w=0$. Thus, $w \in U_{r-2}(\Ta)$ and $\div w = u$, establishing the
  surjectivity of $\div$ in~\eqref{strong-symmetry-loc}. 

  Next, let $u \in U_{r-2}^2(\Ta)$ with $\dive u=0$.  Then, $u$ is in
  the kernel of the last operator in the exact sequence of
  Theorem~\ref{thm:elasticitysequence}. Hence, there is a
  $ v \in Z_{r}^1(\Ta) \otimes {\V}$ such that
  $\curl \Xi^{-1} \curl v=u$. But, by \eqref{613},
  $\curl \Xi^{-1} \curl (\sym v)= \curl \Xi^{-1} \curl v= u$.  Thus we
  have found a function $w =\sym v $ in $ U_{r}^1(\Ta)$ satisfying
  $\curl \Xi^{-1} \curl w=u$.

  Finally, let $u \in U_{r}^1(\Ta)$ with $\curl \Xi^{-1} \curl
  u=0$. Then $u = \sym(z)$ for some $z \in Z_r^1(\Ta) \otimes {\V}$
  and $\curl \Xi^{-1} \curl z = 0$ by \eqref{613}.  By
  Theorem~\ref{thm:elasticitysequence}, $z= \grad w-\mskw s$ for some
  $w \in Z_{r+1}^0(\Ta) \otimes {\V}=U_{r+1}^0(\Ta)$ and
  $s \in V_{r}^0(\Ta) \otimes {\V}$. Then
  $ u=\sym z= {\varepsilon}(w)- \sym (\mskw s)={\varepsilon}(w).$

  The proof of exactness of \eqref{strong-symmetry-loc-0} proceeds
  similarly using Theorem~\ref{thm:elasticitysequence-0} in place of
  Theorem~\ref{thm:elasticitysequence}, except where it concerns
  the surjectivity of the last map: to prove that $\div$
  in~\eqref{strong-symmetry-loc-0} is onto, consider a
  $u \in \mU_{r-3}^3(\Ta).$ By the exactness of the sequence
  \eqref{mlocalV} we have a $v \in \mV_{r-2}^2(\Ta) \times {\V}$ such
  that $\dive v=u$. For any constant vector $c \in \mathbb{R}^3,$
  since $u \perp \RM$, 
  \[
    \int_T\text{ vskw } v \cdot c=\int_T v : \text{ mskw } c=\int_T v :
    \grad (c \times x)=
    -\int_T \!\dive v \cdot( c \times  x)
    = -\int_T \!u \cdot( c \times  x)=0.
  \]
  Therefore, $2 \text{ vskw } v \in \mZ_{r-2}^3(\Ta)$. By
  the exactness of the sequence \eqref{mlocalZ}, there exists an 
  $m \in \mZ_{r-1}^2(\Ta)$ such that $ \dive m=2 \text{ vskw }
  v$. Hence, by \eqref{alg1} we get
  $2 \text{ vskw } \curl( \Xi^{-1} m)=2 \text{ vskw } v$ and so
  $\dive w=u$ where $w=v- \curl(\Xi^{-1} m) \in \mU_{r-2}^2(\Ta)$.
\end{proof}


\begin{lemma}
  \label{lem:Udims}
  When  $r \ge 4$, 
  \begin{align}    
    \dim U^0_{r+1}(\Ta)    & = 2r^3 + 16r + 12,
    &
       \dim \mU^0_{r+1}(\Ta) &= 2(r-3)(r-2)(r-1),
    \\    
    \dim U_r^1 (\Ta) & =  4r^3 - 3r^2 + 17 r - 6,
    &
      \dim \mU_r^1(\Ta)  & = 4r^3 - 21 r^2 + 29r - 6,
    \\
    \dim U_{r-2}^2 (\Ta) & = 4 r^3 - 9r^2 + 5r - 12,
    & \dim \mU_{r-2}^2(\Ta) & = r(r-1)(4r-11),
    \\
     \dim U_{r-3}^3 (\Ta)  & = 2 r (r-1)(r-2),
    & \dim \mU_{r-3}^3(\Ta) & = 2r^3 - 6r^2 + 4r - {6}.
  \end{align}
\end{lemma}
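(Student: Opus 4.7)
The plan is to derive each of the eight dimensions by combining the previously tabulated counts for the $Z$, $\mZ$, $V$, $\mV$ spaces in Lemmas~\ref{lem:dimVoV} and~\ref{lem:dimZ}, the surjectivity statements of Lemma~\ref{lem:skw-onto}, and the exactness of the elasticity complexes established in Theorem~\ref{thm:strong-symmetry-loc}.

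Rows $0$ and $3$ of the table are immediate. By definition, $\dim U^0_{r+1}=3\dim Z^0_{r+1}=3\dim S^0_{r+1}$ and $\dim \mU^0_{r+1}=3\dim\mZ^0_{r+1}=3\dim\mS^0_{r+1}$; substituting the explicit counts from~\eqref{eq:dimstd} and simplifying yields the two stated cubics. Likewise $\dim U^3_{r-3}=3\dim V^3_{r-3}$ follows from Lemma~\ref{lem:dimVoV}. For $\mU^3_{r-3}$, the six-dimensional space $\RM$ of rigid motions embeds genuinely into $V^3_{r-3}\otimes\V$ whenever $r\ge 4$ (affine vector fields belong to that space), so $\dim \mU^3_{r-3}=3\dim V^3_{r-3}-6$.

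For row $2$, I would use that an element of $V^2\otimes\V$ is symmetric precisely when its $\vskw$ vanishes, so that $U^2_{r-2}=\ker(\vskw,\,V^2_{r-2}\otimes\V)$ and $\mU^2_{r-2}=\ker(\vskw,\,\mV^2_{r-2}\otimes\V)$. Lemma~\ref{lem:skw-onto} asserts that $\vskw$ maps these domains onto $Z^3_{r-2}\otimes\V$ and $\hat Z^3_{r-2}\otimes\V$, respectively, so rank--nullity gives
\[
  \dim U^2_{r-2} = 3\bigl(\dim V^2_{r-2} - \dim Z^3_{r-2}\bigr),
  \qquad
  \dim \mU^2_{r-2} = 3\bigl(\dim \mV^2_{r-2} - \dim\hat Z^3_{r-2}\bigr).
\]
Inserting the formulas from Lemmas~\ref{lem:dimVoV} and~\ref{lem:dimZ} (with $\dim \hat Z^3_r=\dim W^3_r-12$ from the proof of Lemma~\ref{lem:dimZ}) and simplifying produces the claimed cubics.

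The row $1$ dimensions then fall out of the Euler characteristic of the exact sequences~\eqref{strong-symmetry-loc} and~\eqref{strong-symmetry-loc-0}. For the boundary-free case, exactness of $0\to\RM\to U^0_{r+1}\to U^1_r\to U^2_{r-2}\to U^3_{r-3}\to 0$ together with $\dim\RM=6$ gives
\[
  \dim U^1_r = \dim U^0_{r+1} + \dim U^2_{r-2} - \dim U^3_{r-3} - 6,
\]
while exactness of $0\to\mU^0_{r+1}\to\mU^1_r\to\mU^2_{r-2}\to\mU^3_{r-3}\to 0$ similarly yields $\dim\mU^1_r=\dim\mU^0_{r+1}+\dim\mU^2_{r-2}-\dim\mU^3_{r-3}$. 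Substituting the other six formulas just derived completes the proof. I do not anticipate any substantive obstacle; the argument is an assembly of known ingredients, the only care being to check that the standing hypothesis $r\ge 4$ feeds consistently through all the cited results, and that the polynomial arithmetic reproduces the listed expressions verbatim.
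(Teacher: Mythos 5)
Your proposal is correct and follows essentially the same route as the paper: the paper likewise obtains the degree-$2$ counts from Lemma~\ref{lem:skw-onto} via rank--nullity, reads off the degree-$0$ and degree-$3$ counts from the previously established dimensions, and recovers the degree-$1$ counts from the exactness in Theorem~\ref{thm:strong-symmetry-loc}. The arithmetic in your derivation reproduces all eight stated formulas.
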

\begin{proof}
  By Lemma~\ref{lem:skw-onto},
  \begin{align*}
    \dim U_{r-2}^{2}(\Ta)
    &=\dim V^{2}_{r-2}(\Ta)\otimes {\V}-\dim Z_{r-2}^3(\Ta)\otimes {\V}
      =4r^{3}-9r^{2}+5r-12, \\
    \dim \mU^{2}_{r-2}(\Ta)
    &
      =\dim \mV^{2}_{r-2}(\Ta)\otimes {\V}-\dim \hat Z_{r-2}^3(\Ta)\otimes {\V}=r(r-1)(4r-11).
  \end{align*}
  The dimensions of the spaces with form degrees $0$ and $3$ easily
  follow from the previously established dimensions. Finally,
  $\dim U_r^1(\Ta)$ and $\dim \mU_r^1(\Ta)$ are computed
  using the exactness results of Theorem~\ref{thm:strong-symmetry-loc}.
\end{proof}

\subsection{An $H(\inc)$-conforming finite element}

The next result gives more insight into the structure of $U_r^1(\Ta)$,
and in particular, shows that $U_r^1(\Ta)$ is a conforming subspace of
$H(\inc, T) = \{ s \in H^{1}(T, \SSS): \; \inc s \in L^2(T, \SSS)\}$ on
the Alfeld split.  After proving it, we shall develop dofs that are
designed to help enforce global conformity in $H(\inc)$.  Let
$\mathcal{P}_{r}(\Ta; \mathbb{S})$ denote the space of symmetric
matrices whose entries are polynomials of degree $r$ on each
tetrahedron of the Alfeld split $\Ta$, and let
$H^{1}(\Omega; \mathbb{S})$ denote the space of symmetric matrix
fields with each entry in $H^{1}(\Omega)$.

\begin{theorem}
  \label{thm:charU}
  We have the following characterizations
  of $U_r^1(\Ta)$ and $\mU_r^1(\Ta)$.
  \begin{align*}
    U_r^1(\Ta)
    =\{ u\in H^{1}(T; \mathbb{S}): \;
    & 
      u\in \pol_{r}(\Ta; \mathbb{S}), \;
      (\curl u)' \in W_{r-1}^1(\Ta) \otimes {\V}, \\
    &u \text{ is $C^{1}$ at vertices of } T \text{ and }
      \inc u \text{ is $C^{0}$ at vertices of } T \}.
    \\
    \mU_r^1(\Ta)
    =\{ u\in \mathring{H}^{1}(T; \mathbb{S}): \;
    & 
      u\in \pol_{r}(\Ta; \mathbb{S}), \;
      (\curl u)' \in \mW_{r-1}^1(\Ta) \otimes {\V},
    \text{ all first order }\\
    &\text{derivatives of } u \text{ and } \inc u
      \text{ vanishes  at the vertices of } T \}.
  \end{align*}
\end{theorem}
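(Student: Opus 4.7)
The plan is to prove each of the two characterizations by establishing both inclusions. The forward inclusion is handled by direct verification using the BGG-style identities and the symmetric-matrix fact $\tr\curl u = 0$ from~\eqref{eq:trcurl0forsym}. The reverse inclusion is the genuinely nontrivial one, and I would obtain it from the exactness of the elasticity complex already proved in Theorem~\ref{thm:strong-symmetry-loc}, together with a Cesàro--Volterra-type argument to exhibit displacement potentials.

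\emph{Forward direction.} For $u = \sym v$ with $v \in Z_r^1(\Ta)\otimes\V$, I would write $v = u + \mskw s$ with $s := \vskw v$. Symmetry, polynomial degree, and $H^1$-regularity of $u$ follow from those of $v$. The central computation, combining $\tr\curl u = 0$ with $\curl u = \curl v + \Xi\grad s$ from~\eqref{alg2}, is
\begin{equation*}
  (\curl u)' \;=\; \Xi^{-1}\!\curl u \;=\; \Xi^{-1}\!\curl v + \grad s.
\end{equation*}
The first summand is $C^0$ piecewise polynomial because $\curl v \in L_{r-1}^2(\Ta)\otimes\V$ and $\Xi^{-1}$ is algebraic; the second is row-wise a gradient of a $C^0$ scalar, hence row-wise curl-free and tangentially continuous; both therefore lie in $W_{r-1}^1(\Ta)\otimes\V$. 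Proposition~\ref{prop:extraC}\eqref{item:2_prop:extraC} applied componentwise to $v \in S_r^1(\Ta)\otimes\V$ gives the $C^1$-vertex regularity of $u$, and $\inc u = \curl\Xi^{-1}\!\curl v$ (the $\grad s$ piece is annihilated by $\curl$) is $C^0$ at vertices since $\curl v$ is $C^1$ at vertices by definition of $Z_r^1$.

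\emph{Reverse direction.} Given $u$ satisfying the RHS conditions, I would first show $\inc u \in U_{r-2}^2(\Ta)$: it is symmetric and piecewise polynomial of degree $r-2$; the hypothesis $(\curl u)' \in W_{r-1}^1 \otimes \V$ is genuine $H(\curl)$-conformity, so no face-Dirac contributions appear when applying the outer $\curl$, giving $\inc u \in L^2$ with $\div\inc u = 0$ and hence row-wise in $H(\div)$, placing it in $W_{r-2}^2 \otimes \V$. Combined with the vertex-$C^0$ hypothesis and $\skw\inc u = 0$, this puts $\inc u$ in $U_{r-2}^2$. Since $\div\inc u = 0$, the exactness of \eqref{strong-symmetry-loc} in Theorem~\ref{thm:strong-symmetry-loc} supplies some $w \in U_r^1(\Ta)$ with $\inc w = \inc u$. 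Now $u-w$ is symmetric $H^1$ with $\inc(u-w)=0$, so the classical Cesàro--Volterra theorem on the simply connected $T$ yields $p \in H^2(T)\otimes\V$ with $\varepsilon(p) = u-w$. Since $\varepsilon(p) \in \pol_r$ piecewise and $\grad p \in H^1$, each component of $p$ is piecewise polynomial of degree $r+1$ with $C^0$ gradient, so by Proposition~\ref{prop:extraC}\eqref{item:0_prop:extraC} lies in $S_{r+1}^0 = Z_{r+1}^0$, i.e., $p \in U_{r+1}^0$. Therefore $\varepsilon(p) \in \sym(Z_r^1\otimes\V) = U_r^1$ and $u = w + \varepsilon(p) \in U_r^1$.

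\emph{Main obstacle and the $\mU$ case.} The key technical point is the verification that $\inc u \in H(\div)$ with $\div\inc u = 0$ in $L^2$; this depends precisely on the genuine row-wise $H(\curl)$-conformity of $(\curl u)'$, so that no face-Dirac measures are produced when taking the outer $\curl$---this is also essentially the mathematical content that the characterization expresses. The proof of the $\mU_r^1(\Ta)$ characterization follows the identical template using the boundary-conditioned exact sequence~\eqref{strong-symmetry-loc-0} in place of~\eqref{strong-symmetry-loc}, and Proposition~\ref{prop:extraC}\eqref{item:1_prop:extraC},\eqref{item:3_prop:extraC} in place of their non-boundary counterparts. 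The additional hypotheses that $\grad u$ and $\inc u$ vanish at the vertices of $T$ are precisely what is needed to match the vanishing-derivative-at-vertex supersmoothness of $\mathring S_{r+1}^0$ and $\mathring S_r^1$, and the hypothesis $(\curl u)' \in \mathring W_{r-1}^1 \otimes \V$ delivers the vanishing normal trace of $\inc u$ required for $\inc u \in \mU_{r-2}^2(\Ta)$.
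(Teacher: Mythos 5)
Your forward inclusion is the paper's own computation (the identity $\Xi^{-1}\curl z=\Xi^{-1}\curl u-\grad\vskw z$ from~\eqref{alg2} plus Proposition~\ref{prop:extraC}), up to a harmless mis-citation: $p\in S_{r+1}^0(\Ta)$ type memberships follow from the \emph{definition} of the $S$-spaces ($C^0$ piecewise polynomial with $C^0$ gradient), not from Proposition~\ref{prop:extraC}(1), which states a consequence of membership. For the reverse inclusion of the first characterization your route is correct but genuinely different. The paper stays entirely discrete: after producing $z\in Z_r^1(\Ta)\otimes\V$ with $\curl\Xi^{-1}\curl z=\curl\Xi^{-1}\curl m$ via Theorem~\ref{thm:elasticitysequence}, it notes that $q=\Xi^{-1}\curl(m-z)$ is a curl-free element of $W_{r-1}^1(\Ta)\otimes\V$, takes a potential $v\in W_r^0(\Ta)\otimes\V$ from the discrete $W$-sequence, and corrects $m$ by the skew field $\mskw(v)$ so that $\theta=m+\mskw(v)\in Z_r^1(\Ta)\otimes\V$ with $\sym\theta=m$. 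You instead correct the displacement: after matching $\inc$ with some $w\in U_r^1(\Ta)$ using Theorem~\ref{thm:strong-symmetry-loc} (which precedes this theorem and does not use it, so there is no circularity), you invoke the exactness of the continuous complex~\eqref{sobolev-elasticity} to write $u-w=\veps(p)$ and then verify that the piecewise-polynomial, $C^1$ structure forces $p\in S_{r+1}^0(\Ta)\otimes\V$. Both work; the paper's version buys independence from the continuous Ces\`aro--Volterra theorem, yours avoids constructing a rotation potential.

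The real gap is in the $\mU_r^1(\Ta)$ case, which you dispose of as ``the identical template.'' Your scheme must place the Ces\`aro--Volterra potential $p$ into $\mU_{r+1}^0(\Ta)=\mS_{r+1}^0(\Ta)\otimes\V$, i.e.\ you need $p=0$ and $\grad p=0$ on $\partial T$ after subtracting a rigid motion, and this does \emph{not} follow from $\veps(p)=u-w=0$ on $\partial T$: already on the single plane $\{x_3=0\}$ the field $p=(-x_3\partial_1 f,\,-x_3\partial_2 f,\,f)$ with $f=f(x_1,x_2)$ has $\veps(p)=0$ on the plane while $\grad p$ is a non-constant skew matrix and $p$ is not rigid there; on $\partial T$ the face rigidity only controls tangential components, leaving normal ``bubbles'' free. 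The repair is to also use the hypothesis $(\curl u)'\in\mW_{r-1}^1(\Ta)\otimes\V$ (and its counterpart for $w$, available from the forward direction): by~\eqref{more1}, $(\grad\curl p)\times n=2\,(\curl\veps(p))'\times n=0$ on $\partial T$, so each component of $\curl p$ has vanishing tangential gradient on every face, hence $\curl p$ is constant on the connected surface $\partial T$; subtracting the corresponding rigid motion gives $\grad p=\veps(p)+\tfrac12\mskw(\curl p)=0$ and then $p=0$ on $\partial T$. The paper's rotation-potential construction sidesteps this entirely, since the exactness of the $\mW$-sequence already delivers a potential $v$ vanishing on $\partial T$.
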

\begin{proof}
  Let $M_r^1(\Ta)$ denote the space on the
  right hand side of the first equality.
  We claim that 
  \begin{equation}
    \label{eq:9}
    U_r^1(\Ta)  \subseteq M_r^1(\Ta).
  \end{equation}
  Indeed, if $u \in U_r^1(\Ta)$, then 
   $u = \sym z$ for some
  $z \in Z_r^1(\Ta) \otimes {\V}$, so by  \eqref{alg2},
  \begin{align*}
    \Xi^{-1} \curl z
    & = \Xi^{-1} \curl u + \Xi^{-1} \curl \skw z
      = \Xi^{-1} \curl u  - \grad \vskw z.    
  \end{align*}  
  Since the last term is in $W_{r-1}^1(\Ta) \otimes {\V}$ and
  $\Xi^{-1} \curl z$ is in $L_{r-1}^1(\Ta) \otimes {\V}$, we conclude
  that $ \Xi^{-1} \curl u$ is in $W_{r-1}^1(\Ta) \otimes {\V}$.  By
  Proposition~\ref{prop:extraC}(\ref{item:2_prop:extraC}), $z$ is
  $C^1$ at the vertices, hence so is $u=\sym z$. Moreover, since
  $\curl z$ is $C^1$ at the vertices, by~\eqref{613},
  $\curl \Xi^{-1} \curl u = \curl \Xi^{-1} \curl z$ is $C^0$ at the
  vertices. This proves~\eqref{eq:9}.

  To prove the reverse inclusion, let ${m} \in M_r^1(\Ta).$
  Put $\sigma = \curl \Xi^{-1} \curl {m}$. By the
  definition of $M_r^1(\Ta)$, we know that $\sigma$ is $C^0$ at the
  vertices, and moreover,
  $ \Xi^{-1} \curl {m} \in W_{r-1}^1(\Ta) \otimes {\V}$ so that $\sigma$
  is in $W_{r-2}^2(\Ta) \otimes {\V}$. Hence $\sigma$ is in
  $V_{r-2}^2(\Ta) \otimes {\V}.$ In fact, $\sigma$ is in the kernel
  of the last operator in the exact sequence of
  Theorem~\ref{thm:elasticitysequence} since $\dive \sigma=0$ and
  $\vskw(\sigma)=0$ due to \eqref{713}. Hence there is a
  $z \in Z_r^1(\Ta) \otimes {\V}$ such that
  $\sigma = \curl \Xi^{-1} \curl z$.

  Now consider $q = \Xi^{-1} \curl (m - z).$ Clearly, $\curl q=0$. In
  addition, the definitions of $M_r^1(\Ta)$ and $Z_r^1(\Ta)$ imply
  that $q $ is in $W_{r-1}^1(\Ta) \otimes {\V}$. Hence the exactness
  of the $W$-sequence yields a $v$ in $W_r^0(\Ta) \otimes {\V} $
  such that $\grad v = q$. By
  Proposition~\ref{prop:extraC}(\ref{item:2_prop:extraC}), $z$ is
  $C^1$ at the vertices, so $q$ is $C^0$ at the vertices, which in
  turn implies that $v$ is $C^1$ at the vertices of $T$.

  To finish the proof, put $\theta = m + \mskw(v)$. Then $\theta$ is
  $C^1$ at the vertices of $T$, and by \eqref{alg2},
  \begin{align*}
    \curl \theta
    & = \curl m + \curl \mskw v =
      \curl m {-} \Xi \grad v = \curl z.
  \end{align*}
  Hence $\theta$ is in $Z_r^1(\Ta) \otimes {\V}$. Since  
  $m = \sym(\theta)$, we conclude that $m \in U_r^1(\Ta)$.

  The proof of the characterization of $\mU^1_r(\Ta)$ is similar. 
\end{proof}

For further study of the complex, we collect some identities in the
next lemma, several of which involve surface operators we now
discuss. Let $F \in \Delta_2(T)$ and let $n$ be its unit normal vector
pointing out of $T$. Fix two tangent vectors $t_1, t_2$ in $n^\perp$,
such that the ordered set $(b_1, b_2, b_3) = (t_1, t_2, n)$ is a
orthonormal right-handed basis for $\mathbb{R}^3$. Any matrix field
$u: T \to \mathbb{R}^{3 \times 3}$ can be written as
$ \sum_{i, j=1}^3 u_{ij}b_i b_j'$ with scalar components
$u_{ij}: T \to \R$.  Let $ u_{nn} = n' u n$ and
$ \trF u = \sum_{i=1}^2 t_i' u t_i.$ With $s = t_1, t_2, n,$ or any
linear combination thereof, let
\begin{equation}
  \label{eq:11}
  u_{\FF} =\sum_{i, j = 1}^2 u_{ij}t_i t_j',
  \qquad
  u_{\F s}= \sum_{i=1}^2 ( s'ut_i) t_i',
  \qquad
  u_{s\F}= \sum_{i=1}^2 (t_i' u s) t_i,  
\end{equation}
Equivalently,
$u_{\FF} = Qu Q$, $u_{\F s} =s' uQ,$ and $u_{s \F} = Q u s$, where
$P = nn'$ and $Q = I-P$.
Next, considering scalar-valued (component) functions
$\phi, w_i, q_i$ and $u_{ij}$, we rewrite the standard surface operators
we have used before (in the proof of Theorem~\ref{thm:PiZ}) on the
left, while defining additional operations needed on the right using
the left definitions:
\begin{align*}
  & \gradF \phi  = (\partial_{t_1} \phi)t_1 +(\partial_{t_2}\phi)  t_2,
  &
    \gradF (w_1t_1 + w_2 t_2)  = t_1 (\gradF w_1)' + t_2(\gradF w_2)',&
  \\
  &\rotF \phi  =(\partial_{t_2}\phi) t_1 - (\partial_{t_1} \phi) t_2,
  &
    \rotF (q_1t_1' + q_2 t_2') = t_1 (\rotF q_1)' + t_2(\rotF q_2)',&
  \\
  & \curlF (w_1 t_1+w_2 t_2)  = \partial_{t_1} w_2 -\partial_{t_2} w_1, 
  &   \curlF u_\FF
    = t_1' \,\curlF (u_{\F t_1})' + t_2'\,\curlF (u_{\F t_2})'.&
\end{align*}
For vector functions $v$, let 
$v_\F = Q v = n \times (v \times n)$. It is easy to see that
\begin{equation}
  \label{eq:10}
  n \cdot \curl v = \curlF v_\F, \qquad
  (\grad v)_\FF = \gradF v_\F. \qquad n \times \rotF \phi = \gradF \phi.
\end{equation}

\begin{lemma}\label{lemma:id}
  For a (smooth enough) matrix-valued function $u$,
\begin{subequations} \label{id}
  \begin{alignat}{1}
      \label{curlid}
      s' \,(\curl u) \,n\,  & = \curlF (u_{\F s})', \text{ for any } s
      \in \R^3,
      \\
      \label{id4}
      \big[(\curl u)'\big]_{\F n}& = \curlF  u_{\FF}.  
      \intertext{If in addition $u$ is symmetric, then}
      \label{id1}
      (\inc u )_{nn}
      &=   \curlF (\curlF  u_{\FF})', 
      \\
            \label{id2}
      (\inc u)_{\F n} &= \curlF\big[(\curl u)'\big]_{\FF},
      \\
      \label{id3}
      \trF \curl u
      & =-\curlF (u_{\F n})'.  
     \intertext{For a (smooth enough) vector-valued function $v$,}
      2 (\curl \varepsilon (v))'
      & = \grad \curl v \label{more1},
      \\
      2\left[(\curl \varepsilon (v))'\right]_{\FF}
      & =  \gradF (\curl v)_\F \label{more2},
      \\
      \label{more3}
      \curl v
      & = n (\curlF v_\F) + \rotF (v \cdot n) + n \times \partial_n v,
      \\
      \label{more4}
      2[\varepsilon(v)]_{n\F} & = 2 [\varepsilon(v)_{\F n}]' 
      =
      \gradF (v\cdot n)
      + \partial_n v_\F,
      \\
      \label{more5}
      \trF (\rotF v_\F') & = \curlF v_\F.
    \end{alignat}
\end{subequations}
\end{lemma}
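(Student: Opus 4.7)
The plan is to verify all the identities by direct calculation in the orthonormal frame $(b_1,b_2,b_3)=(t_1,t_2,n)$ adapted to $F$. Since $F$ is a planar facet, this frame is constant along $F$, so the surface derivatives $\partial_{t_1},\partial_{t_2}$ commute with component extraction, and each identity reduces to a rearrangement of scalar partial derivatives. Writing $u=\sum_{i,j=1}^{3}u_{ij}\,b_i b_j'$, the row-wise curl has components $(\curl u)_{jk}=\epsilon_{klm}\partial_l u_{jm}$ (with $\partial_l=\partial_{b_l}$); the crucial observation is that $[(\curl u)\,n]_j=\partial_{t_1}u_{j2}-\partial_{t_2}u_{j1}$ involves only tangential differentiation.

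This single observation is the engine behind \eqref{curlid}, \eqref{id4}, and \eqref{id3}: in each case one expands the appropriate block ($u_{\F s}$, $u_{\FF}$, or $u_{\F n}$) in the frame and matches against the scalar definitions of $\curlF$ and $\trF$. For the symmetric-matrix identities \eqref{id1} and \eqref{id2}, I invoke $\tr(\curl u)=0$ for symmetric $u$ (cited at \eqref{eq:trcurl0forsym}) to rewrite $\inc u=\curl \Xi^{-1}\curl u=\curl(\curl u)'$, and then apply \eqref{curlid} (with $s=n$) and \eqref{id4} to $(\curl u)'$ in place of $u$. Concretely, \eqref{curlid} yields $(\inc u)_{nn}=n'\curl(\curl u)'\,n=\curlF[\,((\curl u)')_{\F n}\,]'$, which by \eqref{id4} equals $\curlF(\curlF u_{\FF})'$; and \eqref{id4} applied directly to $(\curl u)'$ gives \eqref{id2} once one uses the symmetry of $\inc u$.

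For the vector identities: \eqref{more1} is classical, following from $2\curl\varepsilon(v)=\curl\grad v+\curl(\grad v)'=\curl(\grad v)'$ together with the component calculation $[\curl(\grad v)']_{ij}=\epsilon_{jkl}\partial_k\partial_i v_l=\partial_i(\curl v)_j=[(\grad\curl v)']_{ij}$. Identity \eqref{more2} is then extracted by taking the $\FF$-block of \eqref{more1} and applying $(\grad w)_{\FF}=\gradF w_{\F}$ from \eqref{eq:10}. For \eqref{more3}, the normal component of $\curl v$ is $\curlF v_\F$ by \eqref{eq:10}, while a direct componentwise calculation shows the tangential component equals $\rotF(v\cdot n)+n\times\partial_n v$. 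Identity \eqref{more4} unfolds $2\varepsilon(v)=\grad v+(\grad v)'$ and isolates the $n\F$ and $\F n$ blocks, yielding $2[\varepsilon(v)]_{n\F}=\sum_{i=1}^{2}[\partial_{t_i}(v\cdot n)+\partial_n v_i]\,t_i$. Finally, \eqref{more5} is immediate from the definitions: $\trF(\rotF v_{\F}')=\partial_{t_1}v_2-\partial_{t_2}v_1=\curlF v_\F$.

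The only real obstacle is notational bookkeeping: keeping primes, row/column distinctions, and the $\F$-subscripts consistent through the identities involving a transposition (most notably \eqref{id4}, \eqref{id2}, and \eqref{more1}). Once the frame-based component calculus is laid out, each identity is a one- or two-line verification, and no deeper machinery is required.
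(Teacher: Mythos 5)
Your proposal is correct and follows essentially the same route as the paper: the frame-based identities \eqref{curlid} and \eqref{id4} are established first, then \eqref{id1} and \eqref{id2} are obtained by applying them to $(\curl u)'$ together with $\tr(\curl u)=0$ for symmetric $u$ (and \eqref{id3} likewise hinges on that same symmetry cancellation, which the paper makes explicit via trace invariance). The only cosmetic difference is \eqref{more1}, which the paper derives from the algebraic identities \eqref{alg1}--\eqref{alg2} involving $\Xi$ and $\mskw$ while you verify it by a direct index computation; the two arguments are equivalent.
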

\begin{proof}
  The first identity \eqref{curlid} follows from~\eqref{eq:10}.
  The second follows from the first:
  \begin{align*}
    \big[(\curl u)'\big]_{\F n}
    & =  \sum_{i=1}^2 \left[n'(\curl u)' t_i\right]\, t_i'
    &&
       \text{by~\eqref{eq:11}}
    \\
    &
      = \sum_{i=1}^2 t_i' \,\curlF (u_{\F t_i})' 
      && \text{by~\eqref{curlid}},
  \end{align*}
  which
  equals $\curlF u_\FF$ per our definition. 
  To prove~\eqref{id1},
  \begin{align*}
    (\inc u)_{nn} & = n' (\curl (\curl u)')  n
                   = \curlF \left(\left[(\curl u)'\right]_{\F n}\right)'
    && \text{by~\eqref{curlid}}
    \\
                 & = \curlF (\curlF u_\FF)'
    && \text{by~\eqref{id4}}.
  \end{align*}
  To prove~\eqref{id2},
  we use~\eqref{id4}
  to obtain  
  $[(\inc u)']_{\F n} 
  = \curlF [(\curl u)']_\FF$
  and use the symmetry of $\inc u$. 
  To prove~\eqref{id3}, we use \eqref{eq:trcurl0forsym}, noting that
  trace is invariant under a basis change, to obtain
  $0 = \tr(\curl u) = \trF(\curl u) + n' (\curl u) n$.
  The last term, by~\eqref{curlid}, 
  equals  $\curlF (u_{\F  n})',$ thus proving~\eqref{id3}.
  To prove~\eqref{more1}, 
  \begin{alignat*}{1}
    2\curl \varepsilon(v)
    &=-2\curl  (\skw \grad v)= -2\curl (\text{mskw} \text{ vskw } \grad v)\\
    &=\Xi \grad (2\text{vskw } \grad v)= \Xi \grad (\curl v)
  \end{alignat*}
  which equals the right hand side of~\eqref{more1} since
  $\tr (\grad \curl v)=0$.  Equation~\eqref{more2} follows from
  \eqref{more1} and~\eqref{eq:10}.  Proving the remaining
  identities involves mere definition chasing.
\end{proof}

The identities in the next lemma are obtained by integration by parts
on a face.  Stokes theorem gives, for $q=q_1 t_1' + q_2 t_2',$
\begin{equation}
  \label{eq:15}
  \int_F \curlF u_\FF \cdot q \, \dA
  = \int_F u_\FF : \rotF q \, \dA
  + \int_{\partial F} u_\FF t \cdot q' \, \ds.
\end{equation}
Here and in the sequel, $t$ denotes a unit tangent vector along
$\partial F$ (not to be confused with $t_i$) oriented with respect to
$n$ to satisfy the right hand rule. We use ``$\cdot$'' for inner
products between row-vectors (as in the first term of~\eqref{eq:15})
as well between column vectors (as in the last term of~\eqref{eq:15}),
and we use ``$:$'' to denote the Frobenius inner product between
matrices.  Using $\mathbb{I}_F = t_1t_1' + t_2 t_2'$, we define
$\devF u_\FF = u_\FF - \mathbb{I}_F(\trF u_\FF)/2 $, which is used
below.

\begin{lemma}
  \label{lem:incintgF}
  Let $u$ be a symmetric matrix-valued function and let $q_i$ and
  $\phi$ be scalar-valued functions.  For smooth enough $u,$
  $q=q_1 t_1' + q_2 t_2',$ and $ \phi$, we have
  \begin{align}
    \label{eq:13}
    \int_F (\inc u)_{nn} \phi
    & = \int_F u_\FF : \rotF(\rotF \phi)'\; \dA
    \\ \nonumber 
    &
      + \int_{\partial F} (\curlF u_\FF) t\, \phi\, ds
      + \int_{\partial F} u_\FF t \cdot (\rotF \phi)' \,\ds, 
    \\ \label{eq:14}
    \int_F (\inc u)_{\Fn} \cdot q \, \dA
    &=
      \int_F \left[ (\curl u)' \right]_\FF : \devF \rotF q \, \dA
      -\frac 1 2 \int_F u_{\nF} \cdot \rotF \curlF q'\; \dA
    \\ \nonumber 
    & +
      \int_{\partial F} \left[ (\curl u)' \right]_\FF t \cdot q' \, \ds
      - \frac 1 2 \int_{\partial F} (u_{n \F}\cdot t) \,\curlF q' \, \ds.
  \end{align}
\end{lemma}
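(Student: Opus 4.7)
The strategy for both identities is the same: rewrite the integrand on the left-hand side using the pointwise identities of Lemma \ref{lemma:id}, apply the Stokes-type identity \eqref{eq:15}, and, where necessary, iterate to shift a further surface derivative off $u$.

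\emph{Proof of \eqref{eq:13}.} Start from \eqref{id1} to write $(\inc u)_{nn} = \curlF (\curlF u_{\FF})'$. Apply the scalar version of the Stokes identity
\[
\int_F (\curlF w)\, \phi\, \dA = \int_F w \cdot \rotF \phi\, \dA + \int_{\partial F} (w \cdot t)\, \phi\, \ds
\]
with $w = (\curlF u_{\FF})'$ to transfer the outer $\curlF$ onto $\phi$. This yields
\[
\int_F (\inc u)_{nn}\,\phi\, \dA = \int_F \curlF u_{\FF} \cdot (\rotF \phi)'\, \dA + \int_{\partial F} (\curlF u_{\FF})\, t\, \phi\, \ds.
\]
Now apply \eqref{eq:15} to the first term on the right with $q = (\rotF \phi)'$ to obtain the remaining bulk term $\int_F u_\FF : \rotF (\rotF \phi)' \, \dA$ and the last boundary integral, giving \eqref{eq:13}.

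\emph{Proof of \eqref{eq:14}.} By \eqref{id2}, $(\inc u)_{\Fn} = \curlF [(\curl u)']_\FF$. Apply \eqref{eq:15} with $u_\FF$ replaced by $[(\curl u)']_\FF$ to obtain
\[
\int_F (\inc u)_{\Fn} \cdot q\, \dA = \int_F [(\curl u)']_\FF : \rotF q\, \dA + \int_{\partial F} [(\curl u)']_\FF\, t \cdot q'\, \ds.
\]
The second term matches the third term of \eqref{eq:14}, so it only remains to split the bulk integrand into deviatoric and trace parts. For any tangential matrix $M$, one has
\[
M : \rotF q = \devF M : \rotF q + \tfrac{1}{2} (\trF M)(\mathbb{I}_F : \rotF q),
\]
and the operator $\devF$ is self-adjoint for the Frobenius pairing on the tangent plane, so $\devF M : \rotF q = M : \devF \rotF q$. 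Moreover, \eqref{more5} gives $\mathbb{I}_F : \rotF q = \trF \rotF q = \curlF q'$, while \eqref{id3} together with the elementary fact $\trF M' = \trF M$ yields $\trF [(\curl u)']_\FF = \trF \curl u = -\curlF (u_{\Fn})'$. Combining these, the bulk integrand becomes
\[
[(\curl u)']_\FF : \devF \rotF q - \tfrac{1}{2}\, \curlF (u_{\Fn})' \cdot \curlF q'.
\]
The first piece is exactly the first bulk term of \eqref{eq:14}. For the second, a final application of the scalar Stokes identity above, with $w = (u_{\Fn})'= u_{n\F}$ (using symmetry of $u$) and test function $\psi = \curlF q'$, produces the missing bulk term $-\tfrac{1}{2}\int_F u_{n\F} \cdot \rotF \curlF q'\, \dA$ and the last boundary term $-\tfrac{1}{2} \int_{\partial F}(u_{n\F}\cdot t)\, \curlF q'\, \ds$. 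Assembling the pieces gives \eqref{eq:14}.

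The only genuinely substantive step is the trace/deviatoric decomposition for \eqref{eq:14}, and the main thing to watch carefully is the signs and transposes that arise from \eqref{id3}, \eqref{more5}, and the fact that $u_{n\F} = (u_{\Fn})'$ when $u$ is symmetric. Everything else reduces to two applications of \eqref{eq:15} (and its scalar analogue), applied exactly as above.
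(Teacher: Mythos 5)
Your proposal is correct and follows essentially the same route as the paper's proof: both identities are obtained from the pointwise formulas \eqref{id1}, \eqref{id2}, \eqref{id3}, and \eqref{more5} together with two applications of the Stokes identity \eqref{eq:15} (and its scalar analogue), and your trace/deviatoric splitting of $[(\curl u)']_\FF : \rotF q$ via the self-adjointness of $\devF$ is exactly the decomposition the paper uses. No gaps.
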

\begin{proof}
  By Stokes theorem and  Lemma~\ref{lemma:id}'s \eqref{id1},
  \[
    \int_F (\inc u)_{nn} \phi\, \dA
    = \int_{\partial F} (\curlF u_\FF)' \cdot t \phi\, \ds
    + \int_F (\curlF u_\FF)' \cdot \rotF \phi\, \dA.
  \]
  In the last term, writing $(\curlF u_\FF)' \cdot \rotF \phi$ as
  $\curlF u_\FF \cdot (\rotF \phi)'$ and integrating by parts again
  using~\eqref{eq:15}, we  prove~\eqref{eq:13}. To prove~\eqref{eq:14}, we
  start by using \eqref{id2} and~\eqref{eq:15}:
  \begin{equation}
    \label{eq:12}
    \int_F (\inc u)_{\Fn} \cdot q
    =
    \int_{\partial F}  \left[ (\curl u )'\right]_\FF t \cdot q' \,
    \ds
    + \int_F \left[ (\curl u )'\right]_\FF : \rotF q\, \dA.
  \end{equation}
  Note that
  $ [ (\curl u )']_\FF : \rotF q = [ (\curl u )']_\FF:\devF \rotF q +
  \frac 1 2 (\trF[ (\curl u )']_\FF) (\trF \rotF q).$ Also, by
  \eqref{id3},
  $\trF[ (\curl u )']_\FF = \trF (\curl u) = -\curlF (u_{\Fn})'$, and
  by \eqref{more5}, $\trF \rotF q = \curlF q'.$ Hence the last term
  of~\eqref{eq:12}, after a further integration by parts, becomes
  \[
    \int_F \left[ (\curl u )'\right]_\FF : \rotF q\, \dA
    =   -\frac 1 2 \int_F u_{\Fn}' \cdot \rotF\curlF q' \, \ds - \frac 1 2
    \int_{\partial F} u_{\Fn}'\cdot t \,\curlF q'\, \ds.  
  \] 
  Thus~\eqref{eq:14} follows after using $u_{\Fn}' = u_\nF$.
\end{proof}

The next lemma is an exactness result in 2D, which we
state on the face $F$
for our purposes. Let
$\veps_\F (v)= \sym \gradF v$ for $ v \in t_1 H^1(F) + t_2
H^1(F)$. Spaces like the latter will be abbreviated to \textcolor{black}{$[H^1(F)]^{2}$}
(e.g., $[\pol_{r-5}(F)]^2$ in the next lemma). Let $b_\F$
denote the face bubble, i.e., the product of the three barycentric
coordinates of the vertices of $F$.

\begin{lemma}\label{curlF}
  Let $u_\FF$ be as in~\eqref{eq:11} with $u_{ij}$ in $\pol_r(F)$ and
  $u_\FF' = u_\FF$. If $\curlF (\curlF u_\FF)'=0$ and both
  $u_\FF|_{\partial F}=0$ and
  $(t \cdot \curlF u_\FF)|_{\partial F}=0$, then
  $u_\FF=\varepsilon_\F(b_\F^2 \phi)$ where
  $\phi \in [\pol_{r-5}(F)]^2$.
\end{lemma}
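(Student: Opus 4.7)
My strategy is to realize $u_\FF=\veps_\F(v)$ for a polynomial potential $v\in[\pol_{r+1}(F)]^2$, then exploit the three-dimensional kernel of $\veps_\F$ (the 2D infinitesimal rigid motions) to force $v$ to vanish to second order on $\partial F$, at which point the $b_\F^2$-factorization and degree count fall out immediately.

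Since $F$ is a triangle (hence simply connected) and $\curlF(\curlF u_\FF)'=0$, the two-dimensional elasticity (Calabi) complex on polynomials supplies a $v\in[\pol_{r+1}(F)]^2$ with $\veps_\F(v)=u_\FF$. The kernel of $\veps_\F$ consists of rigid motions of the form $a+\omega\,(x-P)^\perp$ with $a\in\R^2$ and $\omega\in\R$, where $P$ is any fixed vertex. Such a motion shifts $v(P)$ by $a$ and the scalar $\curlF v$ (globally) by the constant $2\omega$, so I can modify $v$ by a single rigid motion to additionally arrange $v(P)=0$ and $\curlF v(P)=0$ for a chosen vertex $P$ of $F$.

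The key identity $\curlF\veps_\F(v)=\tfrac12(\gradF\curlF v)'$ (a 2D analogue of~\eqref{more2}) converts the hypothesis $(t\cdot\curlF u_\FF)|_{\partial F}=0$ into $\partial_t(\curlF v)=0$ on every edge, so $\curlF v$ is constant along each edge; propagating the value $0$ from $P$ around $\partial F$ using continuity at the three vertices yields $\curlF v\equiv 0$ on $\partial F$. On each edge $e$ with tangent $t_e$ and in-plane normal $\nu_e$, the three scalar components of $u_\FF|_e=0$ read $\partial_{t_e}(v\cdot t_e)=0$, $\partial_{\nu_e}(v\cdot\nu_e)=0$ and $\partial_{t_e}(v\cdot\nu_e)+\partial_{\nu_e}(v\cdot t_e)=0$. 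Combining these with the edgewise identity $\curlF v=\partial_{t_e}(v\cdot\nu_e)-\partial_{\nu_e}(v\cdot t_e)=0$, I obtain that both $v\cdot t_e$ and $v\cdot\nu_e$ are constant along $e$ and that $\partial_{\nu_e}v|_e=0$. Walking around $\partial F$ vertex by vertex and using single-valuedness of $v$ at each vertex together with $v(P)=0$ forces $v\equiv 0$ on $\partial F$, and hence $v$ vanishes to second order on $\partial F$.

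Finally, since $v\in[\pol_{r+1}(F)]^2$ vanishes on $\partial F$, each scalar component is divisible by $b_\F=\lambda_1\lambda_2\lambda_3$; the extra edgewise vanishing of $\partial_{\nu_e}v$ upgrades this to divisibility by $b_\F^2$, yielding $v=b_\F^2\phi$ with $\phi\in[\pol_{r-5}(F)]^2$ (from $r+1-6=r-5$), and $u_\FF=\veps_\F(b_\F^2\phi)$ as claimed. The step most in need of care is the first one, namely the polynomial exactness of the 2D elasticity complex at the prescribed degree $r+1$; once the potential $v$ is in hand, the remainder is propagation of scalar edge data and a routine polynomial divisibility argument.
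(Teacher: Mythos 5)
Your proof is correct, but it takes a genuinely different route from the paper's. The paper never invokes the 2D elasticity complex: it works with $(\curlF u_\FF)'$ directly, noting that the hypotheses make it a curl-free vector field with vanishing tangential trace, hence equal to $\gradF(b_\F\psi)$ for some $\psi\in\pol_{r-2}(F)$ by polynomial de~Rham exactness with boundary conditions; it then subtracts the skew field $g=b_\F\psi(t_1t_2'-t_2t_1')$, which satisfies $(\curlF g)'=\gradF(b_\F\psi)$ and $\sym g=0$, so that $u_\FF-g$ is row-wise curl-free and vanishes on $\partial F$, whence $u_\FF-g=\gradF(b_\F^2\phi)$ by a second application of de~Rham exactness, and symmetrizing finishes. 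In effect the paper rederives the elasticity potential in miniature (rotation part first, then displacement), with the boundary conditions absorbed into the $b_\F$ and $b_\F^2$ factors at each exactness step. You instead take the polynomial exactness of the 2D elasticity (Calabi) complex at degree $r+1$ as your black box to produce $v$ with $\veps_\F(v)=u_\FF$ outright, and then do the boundary work by hand: normalizing by a rigid motion, using $\curlF\veps_\F(v)=\tfrac12\gradF\curlF v$ to kill $\curlF v$ on $\partial F$, and combining the edge-frame components of $u_\FF|_e=0$ with $\curlF v|_e=0$ to propagate $v=0$ and $\gradF v=0$ around $\partial F$. Your black box is heavier (though standard, and verifiable by the dimension count of the polynomial Calabi sequence), while your boundary analysis is more explicit; the paper's two lighter de~Rham invocations deliver the $b_\F$-divisibility for free. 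Both arguments hinge on the same symmetrization observation — you use it at the outset by choosing an elasticity potential, the paper uses it at the end via $\sym g=0$. One small point worth stating explicitly in your write-up: the constancy of $\curlF v$ and of $v$ on each edge glues to a single value on all of $\partial F$ because $v$ is a single polynomial on $F$, so no matching issue arises at the vertices.
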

\begin{proof}  
  Since $\curlF (\curlF u_\FF)' =0$ and the tangential component of
  $(\curlF u_\FF)'$ vanishes on $\partial F$, we have
  $(\curlF u_\FF)'=\gradF (b_\F \psi)$ for some
  $\psi \in \pol_{r-2}(F)$. Put $g=b_F \psi (t_1 t_2'- t_2
  t_1')$. Observe that $\gradF(b_F \psi)=(\curlF g)'$ and $\sym(g) = 0$.
  Thus, $\curlF(u_\FF-g)=0$ and $u_\FF-g$ vanishes on $\partial F$. Hence,
  there exists $\phi \in [\pol_{r-5}(F)]^2$ such that
  $\gradF (b_\F^2 \phi)= u_\FF-g$.  We conclude by noting that
  $u_\FF= \sym u_\FF= \varepsilon_\F(b_\F^2 \phi)$.
\end{proof}

With these preparations, we proceed to develop degrees of freedom for
$U_r^1(\Ta).$ Instead of directly using the definition of $U_r^1(\Ta)$
as the symmetric part of another space, we use its alternate
characterization in Theorem~\ref{thm:charU} to design its dofs.  Let
$t_e$ denote a unit tangent vector (of arbitrarily fixed orientation)
along an edge $e$. We will need the space of rigid displacements
within a face $F$, namely
$\RM(F):=\{d_1 t_1'+ d_2 t_2'+c ((x \cdot t_1) t_2' - (x \cdot t_2)
t_1'): c, d_i \in \mathbb{R} \}$.

\begin{lemma}
  \label{lem:U1dofs}
  For any $r \ge 4$, the functionals
\begin{subequations}
  \label{eqn:revU1dofs}
  \begin{alignat}{4}
    \label{eqn:revU1dofs-V1}
    & D^{\alpha} u(a),\quad &&
      |\alpha| \le 1,  a\in \Delta_0(T),
      && \dofcnt{\rm{($96$)}}
    \\
    \label{eqn:revU1dofs-V2}
    &  \inc u(a),\quad &&
      a\in \Delta_0(T),
      && \dofcnt{\rm{($24$)}}
    \\      
    \label{eqn:revU1dofs-E1}
    &\int_e u: \kappa,
    \quad && \kappa \in   \sym [\pol_{r-4}(e)]^{3 \times 3}, e \in \Delta_1(T),
      &&    \dofcnt{\rm{($6\cdot 6(r-3)$)}}
    \\
    \label{eqn:revU1dofs-E2}
    &\int_e (\curl u)'t_e \cdot \kappa, \quad &&
        \kappa \in [{\pol}_{r-3}(e)]^3, e\in \Delta_1(T) \quad
      && \dofcnt{\rm{($6\cdot 3 \cdot (r-2)$)}}
    \\
    \label{eqn:revU1dofs-F2}
    &\int_e (\inc u) n^\F \cdot  \kappa, \quad &&
\kappa \in  [{\pol}_{r-4}(e)]^3, e \in \Delta_1(F),  F\in \Delta_2(T) \quad
      && \dofcnt{\rm{($ 4 \cdot 3 \cdot 3 (r-3)$)}}
    \\
    \label{eqn:revU1dofs-F3-a}
    &\int_F (\inc u)_{nn}\,\kappa, \quad &&  \kappa \in  {\pol}_{r-5}(F)/\pol_1(F),  F\in \Delta_2(T)
      && \dofcnt{$\left(4\cdot\left[\frac{1}{2}(r-4)(r-3) 
            - 3\right]\right)$}
    \\
    \label{eqn:revU1dofs-F3-b}
    &\int_F (\inc u )_{\F n} \cdot \kappa, \quad 
    &&  \kappa \in [{\pol}_{r-5}(F)]^2 /\RM(F),
       F\in \Delta_2(T),
      && \dofcnt{$\left(4\cdot 
          \left[2\frac 1 2 (r-4)(r-3) - 3 \right]\right)$}
    \\  
    \label{eqn:revU1dofs-F4}
    &\int_F  u_{\FF} : \kappa , \quad &&  \kappa
    \in \varepsilon_\F\!\left(b_\F^2\,
      [{\pol}_{r-5}(F)]^2\right), 
      F\in \Delta_2(T)
      && \dofcnt{\rm{($4 \cdot 2 \cdot \frac{1}{2}(r-4)(r-3)$)}}
    \\
    \label{eqn:revU1dofs-F5}
    &\int_F [(\curl u)']_{\FF} \!:\! \kappa, \quad && \kappa \in \gradF(b_\F
    [{\pol}_{r-3}(F)]^2),   F\in \Delta_2(T)
      && \dofcnt{\rm{($4 \cdot 2 \cdot \frac{1}{2}(r-2)(r-1)$)}}
      \\
    \label{eqn:revU1dofs-F6}
    &\int_F u_{\F n}\cdot \kappa , \quad && \kappa \in  \gradF(b_\F^2 
      {\pol}_{r-5}(F)),
      \quad F\in \Delta_2(T),
      && \dofcnt{\rm{($4 \cdot \frac{1}{2}(r-4)(r-3)$)}}
    \\
    \label{eqn:revU1dofs-F1}
    &\int_F u_{nn}\, \kappa, \quad && \kappa \in {\pol}_{r-3}(F), \qquad
      F\in \Delta_2(T)\qquad
      && \dofcnt{\rm{($4 \cdot \frac{1}{2}(r-2)(r-1)$)}}
     \\
    \label{eqn:revU1dofs-T1}
    &\int_T \inc u : \inc  \kappa, \quad && \kappa \in \mU^{1}_{r}(\Ta), \quad 
      && \dofcnt{\rm{($2r^3-9r^2+7r+6$)}}\\
    \label{eqn:revU1dofs-T2}
    &\int_T u:  \varepsilon(\kappa), \quad && \kappa \in \mU^{0}_{r+1}(\Ta),
      &&  \dofcnt{\rm{($ 2(r-3)(r-2)(r-1)$)}}
  \end{alignat}
\end{subequations}
form a unisolvent set of degrees of freedom for $U_{r}^1(\Ta)$.
\end{lemma}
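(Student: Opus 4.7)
The plan is to establish unisolvency in the standard two steps: first a dimension count, and then an injectivity argument showing that vanishing of all the listed dofs forces $u\equiv 0$. The dimension count is a routine algebraic sum verifying that the totals in the right column of \eqref{eqn:revU1dofs-V1}--\eqref{eqn:revU1dofs-T2} add up to $\dim U_r^1(\Ta)=4r^3-3r^2+17r-6$ from Lemma~\ref{lem:Udims}. For the injectivity argument I would proceed in three layers. First, using \eqref{eqn:revU1dofs-V1} and \eqref{eqn:revU1dofs-V2}, the matrix $u$, all its first derivatives, and $\inc u$ vanish at every vertex of $T$. A standard Hermite-type interpolation argument applied componentwise on each edge (exploiting the symmetry of $u$), combined with \eqref{eqn:revU1dofs-E1}, gives $u\equiv 0$ on every edge; \eqref{eqn:revU1dofs-E2} then forces $(\curl u)' t_e\equiv 0$ on every edge.

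The face reduction is the heart of the proof and the main obstacle. Fix $F\in\Delta_2(T)$ with unit normal $n$; the aim is to show $u_{nn}=0$, $u_{\F n}=0$, and $u_\FF=0$ on $F$, so that, with the vertex/edge data, $u\in\mU_r^1(\Ta)$. The scalar $u_{nn}$ is easy: since $u|_e=0$ implies $u_{nn}|_{\partial F}=0$, we may write $u_{nn}=b_\F g$ with $g\in\pol_{r-3}(F)$, and \eqref{eqn:revU1dofs-F1} tested against $g$ forces $g=0$. For $u_\FF$ and $[(\curl u)']_\FF$ I would integrate the face dofs \eqref{eqn:revU1dofs-F3-a} and \eqref{eqn:revU1dofs-F3-b} by parts using the identities \eqref{id1}, \eqref{id2} of Lemma~\ref{lemma:id} and the identities \eqref{eq:13}, \eqref{eq:14} of Lemma~\ref{lem:incintgF}. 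The quotient test spaces $\pol_{r-5}(F)/\pol_1(F)$ and $[\pol_{r-5}(F)]^2/\RM(F)$ are chosen precisely so that the genuine bulk content of each test function yields a meaningful bulk identity for $u_\FF$ or $[(\curl u)']_\FF$, while the $\pol_1$ and $\RM(F)$ complements produce only boundary terms already annihilated by the edge dofs \eqref{eqn:revU1dofs-F2} and \eqref{eqn:revU1dofs-E2} together with the previously established vertex/edge data. Combined with the dof \eqref{eqn:revU1dofs-F5}, these identities verify the hypotheses of Lemma~\ref{curlF}---namely $\curlF(\curlF u_\FF)'=0$, $u_\FF|_{\partial F}=0$, and $t\cdot \curlF u_\FF|_{\partial F}=0$---forcing $u_\FF=\varepsilon_\F(b_\F^2\phi)$ for some $\phi\in[\pol_{r-5}(F)]^2$; then \eqref{eqn:revU1dofs-F4} with $\kappa=\varepsilon_\F(b_\F^2\phi)$ forces $\phi=0$. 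The vector $u_{\F n}$ is dispatched by an analogous integration-by-parts argument in conjunction with \eqref{eqn:revU1dofs-F6}, and the remaining boundary quantity $[(\curl u)']_{\F n}=\curlF u_\FF=0$ follows automatically from identity \eqref{id4}.

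After the face reductions, $u\in\mU_r^1(\Ta)$, so only the interior dofs remain. Taking $\kappa=u$ in \eqref{eqn:revU1dofs-T1} gives $\int_T|\inc u|^2=0$, hence $\inc u=0$. By the exactness of the elasticity complex \eqref{strong-symmetry-loc-0} proved in Theorem~\ref{thm:strong-symmetry-loc}, there exists $w\in\mU_{r+1}^0(\Ta)$ with $\varepsilon(w)=u$. Finally, \eqref{eqn:revU1dofs-T2} with $\kappa=w$ gives $\int_T|\varepsilon(w)|^2=\int_T u:\varepsilon(w)=0$, so $\varepsilon(w)=u=0$. The delicate step in this plan is the bookkeeping in the face reduction: matching the quotient structure of the test spaces in \eqref{eqn:revU1dofs-F3-a}--\eqref{eqn:revU1dofs-F3-b} with the precise content already absorbed by the edge and vertex dofs, so that each integration by parts cleanly isolates the bulk identity needed to invoke Lemma~\ref{curlF}.
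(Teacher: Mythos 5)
Your proposal follows essentially the same route as the paper's proof: a dimension count, then a vertex--edge--face--interior reduction in which Lemmas~\ref{lemma:id}, \ref{lem:incintgF}, and \ref{curlF} drive the face step (killing $(\inc u)_{nn}$, $(\inc u)_{\F n}$, $u_\FF$, $[(\curl u)']_\FF$, $u_{\F n}$, $u_{nn}$ in some order) and the exactness from Theorem~\ref{thm:strong-symmetry-loc} handles the two interior dofs. The one step you should make explicit is that passing from ``$u|_{\partial T}=0$, $(\curl u)'Q|_{\partial T}=0$, and the vanishing vertex data'' to ``$u\in\mU_r^1(\Ta)$'' is not immediate from the definition of $\mU_r^1(\Ta)$ as $\sym(\mZ_r^1(\Ta)\otimes\V)$; it requires the second characterization in Theorem~\ref{thm:charU}, which is precisely what the paper invokes at that point.
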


\begin{proof}
  The number of dofs add up to the dimension of
  $U_{r}^1(\Ta)$ given in Lemma~\ref{lem:Udims}.
  Suppose that all dofs of \eqref{eqn:revU1dofs}
  vanish for a $u \in U_{r}^1(\Ta)$. We must show that $u \equiv 0$.
  The following conclusions are immediate from
  (\ref{eqn:revU1dofs-V1}, \ref{eqn:revU1dofs-E1}), 
  (\ref{eqn:revU1dofs-V2}, \ref{eqn:revU1dofs-F2}), and  
  (\ref{eqn:revU1dofs-V1}, \ref{eqn:revU1dofs-E2}), respectively:
  \begin{equation}
    \label{eq:16}
    u|_e =0, \qquad (\inc u)n^\F|_e=0, \qquad (\curl u)' t|_e = 0,
    \qquad  \text{ for } e \in \Delta_1(T), \; F \in \Delta_2(T).
  \end{equation}
  In particular, the last equality, in conjunction with \eqref{id4} of
  Lemma~\ref{lemma:id} shows that
  $0 = n' (\curl u)' t= (\curlF u_{\FF})' t $ on $\partial F$. Hence
  all terms on the right hand side of~\eqref{eq:13} vanish when
  $\phi \in \pol_1(F)$. Thus Lemma~\ref{lem:incintgF},  combined with
  \eqref{eqn:revU1dofs-F3-a} and~\eqref{eq:16}, yields
  $(\inc u)_{nn} = 0$ on all $F \in \Delta_2(T)$.  Next, before
  proceeding to use \eqref{eqn:revU1dofs-F3-b}, observe that any
  $q = c ((x \cdot t_1) t_2' - (x \cdot t_2) t_1')$, with $c \in \R$,
  has $\rotF q = c(t_1 t_1'+t_2 t_2')$. Hence
  $\devF \rotF \RM(F) = 0$. Similarly, 
  \textcolor{black}{$\rotF \curlF [\RM(F)]' = 0$.}
  Hence all terms on the right hand side of~\eqref{eq:14} vanish for
  $q \in \RM(F)$.
  Therefore,  Lemma~\ref{lem:incintgF}  combined with
  \eqref{eqn:revU1dofs-F3-b}, \eqref{eq:16}, and the observation that
  $(\inc u)_\nF = [(\inc u)_\Fn]'$ leads us to conclude that $(\inc
  u)_\nF =0$ on all faces of $T$. Thus, $(\inc u) n |_{\partial T} = 0$.

  In particular, due to~\eqref{id1}, $\curlF (\curlF u_{\FF})' = 0$ on
  any $F\in \Delta_2(T)$. This implies, by virtue of~\eqref{eq:16} and
  Lemma~\ref{curlF}, that $u_{\FF}=\varepsilon_\F(b_\F^2 \phi)$ for
  some $\phi \in [\pol_{r-5}(F)]^2$, and hence by
  \eqref{eqn:revU1dofs-F4}, we have $u_{\FF}$ vanishes on $F$. This
  implies, by~\eqref{id4}, that $[(\curl u)']_{\F n}$ vanishes on $F$,
  i.e., $n' (\curl u)'Q|_{\partial T} = 0.$ In fact, all of
  $(\curl u)'Q$ vanishes on $\partial T$, as we shall now show.

  On a face $F$, by~\eqref{id2}, $\curlF [(\curl u)']_{\FF} = 0$, and
  by~\eqref{eq:16}, $[(\curl u)']_{\FF} t|_{\partial F} = 0.$ Hence,
  $(\curl u)'_{\FF}=\gradF(b_\F \phi)$ for a
  $\phi \in [\pol_{r-3}(F)]^2$. The dofs of~\eqref{eqn:revU1dofs-F5}
  then show that $ [(\curl u)']_{\FF}$ vanishes on $F$. We have
  already shown that $[(\curl u)']_{\F n}$ also vanishes. Hence
  $(\curl u)'Q|_{\partial T}=0$. Since we know from the first
  characterization of Theorem~\ref{thm:charU} that
  $(\curl u)' \in W_{r-1}^1(\Ta) \otimes {\V}$, we have just shown
  that $(\curl u)' \in \mW_{r-1}^1(\Ta) \otimes {\V}$.

  Next, we proceed to show that $u|_{\partial T} = 0$.  On a face $F$,
  since $[(\curl u)']_{\FF} =0,$ by~\eqref{id3},
  $ \trF [(\curl u)']_{\FF} = \trF (\curl u)' = \curlF u_{\F n}=0.$
  Moreover, by~\eqref{eq:16}, $u_{\F n}|_{\partial F} =0$, so
  $u_{\F n}=\gradF(b_\F^2 \phi)$ \textcolor{black}{for $\phi \in \pol_{r-5}(F)$}. Thus,
  by \eqref{eqn:revU1dofs-F6} we conclude that $u_{\F n}$ vanishes on
  $F$. Of course, we also have $u_{nn}|_{\partial T} = 0$ due to
  \eqref{eqn:revU1dofs-F1}. Thus $u|_{\partial T} = 0$.

  We are now in a position to apply the second characterization of
  Theorem~\ref{thm:charU}, to conclude that $u \in
  \mU_r^1(\Ta)$. Hence \eqref{eqn:revU1dofs-T1} implies
  $\inc u \equiv 0$ on $T$.  Using the exactness of
  the sequence   \eqref{strong-symmetry-loc} given by 
  Theorem~\ref{thm:strong-symmetry-loc} and 
  the dofs of~\eqref{eqn:revU1dofs-T2}, we
  see that $u \equiv 0$ on $T$.
\end{proof}

\subsection{Degrees of freedom of the remaining elements}

In this subsection, we give unisolvent dofs for the other spaces in
the complex, $U^0_{r+1}(\Ta),$ $U_{r-2}^2(\Ta)$ and $U^3_{r-3}(\Ta)$.
We begin by proposing the following dofs for $U_{r+1}^0(\Ta)$ and
proving them to be unisolvent.

\begin{lemma}
 \label{lem:U0dofs}
  For any $r \ge 4$, the functionals
\begin{subequations}
\label{eqn:Sigmadofs-new}
\begin{alignat}{4}
\label{eqn:U0_DOF1}
&D^\alpha \omega (a),\quad &&  |\alpha|\le 2,
\; a\in \Delta_0(T),\quad
&&
\dofcnt{$(120)$}\\
\label{eqn:U0_DOF21}
&\int_e \omega \cdot \kappa {\ds},\quad
&& \kappa\in [\pol_{r-5}(e)]^3,  \; e\in\Delta_1(T),\quad
&& \dofcnt{$(18(r-4))$} \\
\label{eqn:U0_DOF22}
&\int_e \frac{\p \omega}{\p n_e^\pm}\cdot\kappa\,{\ds},\quad 
&& \kappa\in [\pol_{r-4}(e)]^3,  e\in\Delta_1(T),\quad &&
\dofcnt{$(36(r-3))$}\\
\label{eqn:U0_DOF31}
&\int_F [\varepsilon(\omega)]_{\F n} \cdot   \kappa ,
\quad 
&& \kappa \in \gradF\!(b_\F^2 \pol_{r-5}(F)), F \in\Delta_2(T),\quad
 && \dofcnt{$(2(r-4)(r-3))$}\\
\label{eqn:U0_DOF33}
&\int_F \varepsilon_\F(\omega_\F) : \varepsilon_\F(b_\F^2 \kappa) \, 
\quad 
&& \kappa\in [\pol_{r-5}(F)]^2, F\in\Delta_2(T),\quad
&&\dofcnt{$(4(r-4)(r-3))$}\\
\label{eqn:U0_DOF32a}
&\int_F \frac{\partial(\omega \cdot n)}{\partial n}\,  \kappa ,
\quad 
&& \kappa\in \pol_{r-3}(F), F \in\Delta_2(T),\quad
&&  \dofcnt{$(2(r-2)(r-1))$}\\
\label{eqn:U0_DOF32b}
&\int_F \left[(\curl \varepsilon (\omega))'\right]_{\FF} \!: \kappa,
\quad 
&& \kappa\in \gradF \!(b_\F [{\pol}_{r-3}(F)]^2),  F\in\Delta_2(T),\quad
&& \dofcnt{$(4(r-2)(r-1))$} \\
\label{eqn:U0_DOF4}
&
\int_T {\varepsilon( \omega) :\varepsilon(\kappa)}\,{\dx},
\quad 
&& \kappa\in \mU_{r+1}^{0}(\Ta),
&&\dofcnt{$(2 (r-3)(r-2)(r-1))$}
\end{alignat}
\end{subequations}
form a unisolvent set of degrees of freedom for $U_{r+1}^0(\Ta)$.
\end{lemma}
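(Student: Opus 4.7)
I would follow the vertex-to-edge-to-face-to-interior propagation used in the proof of Lemma~\ref{lem:U1dofs}. First I would check that the listed dofs add up to $2r^3+16r+12 = \dim U_{r+1}^{0}(\Ta)$ (Lemma~\ref{lem:Udims}), reducing unisolvency to the implication: if $\omega \in U_{r+1}^{0}(\Ta)$ annihilates every dof in~\eqref{eqn:Sigmadofs-new}, then $\omega\equiv 0$.

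The vertex dofs~\eqref{eqn:U0_DOF1} force the full $2$-jet of $\omega$ to vanish at every $a\in\Delta_0(T)$. Combined with~\eqref{eqn:U0_DOF21} and~\eqref{eqn:U0_DOF22}, a standard 1D Hermite-type unisolvency argument on each edge yields $\omega|_e = 0$ and $\grad\omega|_e = 0$ for all $e\in\Delta_1(T)$.

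The face step is the main obstacle. Fix $F\in\Delta_2(T)$, decompose $\omega|_F = \omega_\F + \alpha n$ with $\alpha = \omega\cdot n$, and set $\gamma = \partial_n \omega_\F$, $\delta=\partial_n(\omega\cdot n)$. The edge conditions force $\omega_\F$ and $\gradF\omega_\F$ to vanish on $\partial F$, so $\omega_\F = b_\F^2 p$ with $p\in[\pol_{r-5}(F)]^2$; testing $\kappa = p$ in~\eqref{eqn:U0_DOF33} and using the kernel characterization of $\varepsilon_\F$ together with the zero boundary trace gives $\omega_\F\equiv 0$ on $F$. Since $\partial_n\omega|_e = 0$ on each edge of $F$, $\delta|_{\partial F} = 0$ and $\gamma|_{\partial F}=0$, so $\delta = b_\F d$ and $\gamma=b_\F\tilde\gamma$; testing $\kappa=d$ in~\eqref{eqn:U0_DOF32a} gives $\delta\equiv 0$ on $F$. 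To decouple $\alpha$ and $\gamma$, I invoke identities~\eqref{more2},~\eqref{more3} and~\eqref{eq:10} from Lemma~\ref{lemma:id} to obtain, on $F$ (where $\omega_\F=\delta=0$),
\[
2\bigl[(\curl\varepsilon(\omega))'\bigr]_{\FF} = \gradF(\curl\omega)_\F = \gradF\bigl(n\times(\gamma-\gradF\alpha)\bigr),
\]
together with~\eqref{more4}, which yields $2[\varepsilon(\omega)]_{\Fn} = \gradF\alpha+\gamma$. Set $\zeta := \gamma-\gradF\alpha$; since $\zeta|_{\partial F} = 0$, write $\zeta = b_\F\hat\zeta$ with $\hat\zeta\in[\pol_{r-3}(F)]^2$. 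Testing~\eqref{eqn:U0_DOF32b} with $\psi = n\times\hat\zeta$ gives $\|\gradF(n\times\zeta)\|_{L^2(F)}^2 = 0$, so $n\times\zeta$ is constant on the connected $F$ and hence vanishes (it is zero on $\partial F$); thus $\gamma = \gradF\alpha$. Consequently $2[\varepsilon(\omega)]_{\Fn} = 2\gradF\alpha$, and testing~\eqref{eqn:U0_DOF31} with $\phi = \tilde\alpha$ (where $\alpha = b_\F^2\tilde\alpha$) produces $\|\gradF(b_\F^2\tilde\alpha)\|_{L^2(F)}^2 = 0$, forcing $\alpha\equiv 0$ and therefore $\gamma\equiv 0$ on $F$.

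With every face trace annihilated, $\omega\in\mU_{r+1}^{0}(\Ta)$, and testing~\eqref{eqn:U0_DOF4} with $\kappa = \omega$ yields $\int_T |\varepsilon(\omega)|^2 = 0$; Korn's inequality (with $\omega|_{\partial T} = 0$) then forces $\omega\equiv 0$ on $T$, completing the proof. The hard part is precisely the coupled face step: the dofs~\eqref{eqn:U0_DOF31} and~\eqref{eqn:U0_DOF32b} must be used in tandem, exploiting that~\eqref{eqn:U0_DOF32b} controls the combination $\gamma-\gradF\alpha$ through the rotated structure of $\curl\varepsilon$ (via $\rotF = -n\times\gradF$), while~\eqref{eqn:U0_DOF31} controls the complementary combination $\gamma+\gradF\alpha$; only their combined use decouples $\alpha$ from $\gamma$.
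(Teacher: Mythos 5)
Your proof is correct and follows essentially the same route as the paper's: the same vertex--edge--face propagation, the same identities \eqref{more2}, \eqref{more3}, \eqref{more4} and \eqref{eq:10} to rewrite the face quantities as elements of $\gradF(b_\F[\pol_{r-3}(F)]^2)$ and $\gradF(b_\F^2\pol_{r-5}(F))$ so that each face dof set pairs with itself, and the same final reduction to $\mU_{r+1}^{0}(\Ta)$ via \eqref{eqn:U0_DOF4}. The only (harmless) differences are that you dispatch $\partial_n(\omega\cdot n)$ through \eqref{eqn:U0_DOF32a} early by writing it as $b_\F d$ directly (the paper does this last, via the factorization $\omega\cdot n=\mu p$), and that the appeal to Korn's inequality at the end can be replaced by the simpler observation that $\ker\veps=\RM$ together with $\omega|_{\partial T}=0$.
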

\begin{proof}
  It is easily verified that the total number of dofs equals the
  dimension of $U_{r+1}^0(\Ta)$ given in Lemma~\ref{lem:Udims}.
  Consider an $\omega \in U_{r+1}^0(\Ta)$ on which the dofs
  \eqref{eqn:Sigmadofs-new} vanish.  By standard arguments, dofs
  \eqref{eqn:U0_DOF1}, \eqref{eqn:U0_DOF21} and \eqref{eqn:U0_DOF22}
  imply that
  \begin{equation}
    \label{eq:17}
    \omega|_e =0, \qquad (\grad \omega)|_e = 0, \qquad \text{ for } e
    \in \Delta_1(T).
  \end{equation}
  Hence, on any face $F \in \Delta_2(T)$, we have
  $\omega_\F \in b_\F^2 [P_{r-5}(F)]^2$, so~\eqref{eqn:U0_DOF33}
  implies that $\omega_\F = 0.$ Also note that~\eqref{eq:17} implies
  $(\curl \omega)_\F \in b_\F [P_{r-3}(F)]^2$, so the dofs of
  \eqref{eqn:U0_DOF32b}, in view of the identity \eqref{more2},
  imply that $(\curl \omega)_\F =0$. This in turn implies, after
  taking the cross product with $n$ on both sides of \eqref{more3} and
  using~\eqref{eq:10}, that
  \begin{equation}
    \label{eq:18}
    \partial_n \omega_\F = \gradF (\omega \cdot n).  
  \end{equation}
  Hence \eqref{more4} yields
  $ 2[\veps(\omega)]_\nF = \gradF (\omega\cdot n) + \partial_n
  \omega_\F = 2 \gradF (\omega\cdot n)$. The latter is in
  $ b_\F^2 [P_{r-5}(F)]^2$ due to~\eqref{eq:17}, so the dofs of
  \eqref{eqn:U0_DOF31} give $\omega\cdot n = 0$ on $F$.  Combining
  with the already shown $\omega_\F \equiv 0$, we summarize: all
  components of $\omega$ vanish on $\partial T$.

  Next, we will show that all first derivatives of $\omega$ also
  vanish on $\partial T$. Consider an $F \in \Delta_2(T)$ and let $K$
  denote one of the subtetrahedra $T_i$ which has $F$ as a face. Then,
  since $\omega\cdot n$ vanishes on $F$, there must exist a
  $p \in \pol_r(K)$ such that $\omega \cdot n = \mu p$. Since
  $\partial_n (\omega \cdot n)|_F = (\partial_n \mu) p|_F$ vanishes on
  $\partial F$ by~\eqref{eq:17}, there exists a
  $\psi \in \pol_{r-3}(F)$ such that $p = b_\F \psi$, i.e.,
  $\partial_n (\omega \cdot n)|_F = (\partial_n \mu) b_\F
  \psi$. Hence~\eqref{eqn:U0_DOF32a} yields
  $\partial_n (\omega\cdot n) =0$. By~\eqref{eq:18},
  $\partial_n \omega_\F$ also vanishes. Since
  $\omega|_{\partial T} \equiv 0$, all the tangential derivatives of
  $\omega$ also vanish, so we conclude that
  $(\grad \omega)|_{\partial T} \equiv 0$.  Thus
  $\omega \in \mU_{r+1}^{0}(\Ta)$. Therefore, \eqref{eqn:U0_DOF4}
  shows that $\omega$ vanishes.
\end{proof}

\begin{lemma}[Dofs of the stress space]
  \label{lem:U2dofs}
  For any $r \ge 4$, the functionals
  \begin{subequations}
    \label{eqn:U2dofs}
    \begin{alignat}{4}
      \label{eqn:U2dofs-a}
      & \sigma(a),\quad
      && a\in \Delta_0(T),\quad &&\dofcnt{\rm{($6\times 4$ dofs)}}
      \\
      \label{eqn:U2dofs-b}
      &\int_e \sigma n^\F\cdot \kappa\, {\ds},\ && \kappa \in
      [\pol_{r-4}(e)]^{3}, \;  e\in \Delta_1(F), \;  F\in
      \Delta_2(T),\quad &&
      \dofcnt{\rm{($3\times 12(r-3)$ dofs)}}
      \\
      \label{eqn:U2dofs-c}
      &\int_F \sigma n^\F\cdot \kappa\, {\dA}, \quad
      && \kappa\in
      [{\pol}_{r-5}(F)]^{3}, \; f\in \Delta_2(T),\qquad
      &&\dofcnt{\rm{($3\times 2(r-3)(r-4)$ dofs)}}
      \\
      \label{eqn:U2dofs-d2}
      &\int_T \sigma : \kappa\, {\dx}, \quad
      && \kappa\in \inc \mU^{1}_{r-1}(\Ta),
      &&
      \dofcnt{\rm{($2r^3 - 9 r^2 +7r +{6}$ dofs)}}
      \\
      \label{eqn:U2dofs-d1}
      &\int_T \dive\sigma\cdot \kappa\, {\dx}, \quad
      && \kappa\in  \mU^{3}_{r-3}(\Ta),
      && \dofcnt{\rm{($2r^3 - 6r^2 + 4r -{6}$ dofs)}}
    \end{alignat}
  \end{subequations}
  form a unisolvent set of degrees of freedom for $U_{r-2}^2(\Ta)$.
\end{lemma}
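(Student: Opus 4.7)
The plan is to verify (i) that the number of dofs listed in \eqref{eqn:U2dofs} equals $\dim U_{r-2}^2(\Ta)$, and (ii) that any $\sigma \in U_{r-2}^2(\Ta)$ on which all these dofs vanish must itself be zero. The count in (i) is a straightforward bookkeeping exercise using Lemma~\ref{lem:Udims}, so the substance of the proof lies in (ii), which naturally splits into a boundary stage (using the vertex, edge and face dofs \eqref{eqn:U2dofs-a}--\eqref{eqn:U2dofs-c} to place $\sigma$ in $\mU_{r-2}^2(\Ta)$) and an interior stage (using the two volume dofs \eqref{eqn:U2dofs-d2}--\eqref{eqn:U2dofs-d1} together with the exact sequence of Theorem~\ref{thm:strong-symmetry-loc} to conclude $\sigma\equiv 0$).

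For the boundary stage, I would first use \eqref{eqn:U2dofs-a} to obtain $\sigma(a)=0$ at every vertex $a\in\Delta_0(T)$. For each face $F\in\Delta_2(T)$ and edge $e\in\Delta_1(F)$, the vector $\sigma n^F|_e\in[\pol_{r-2}(e)]^3$ then vanishes at the endpoints of $e$, so it factors as $\sigma n^F|_e = b_e q$ with $q\in[\pol_{r-4}(e)]^3$, where $b_e$ denotes the edge bubble. Taking $\kappa=q$ in \eqref{eqn:U2dofs-b} yields $\int_e b_e\,|q|^2\,{\ds}=0$, forcing $q\equiv 0$, so $\sigma n^F$ vanishes on every edge of every face. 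Because each boundary face of $T$ is a face of exactly one subtetrahedron in the Alfeld split, $\sigma n^F|_F$ is a single-valued polynomial in $[\pol_{r-2}(F)]^3$; vanishing on $\partial F$, it factors as $\sigma n^F|_F = b_F p$ with $p\in[\pol_{r-5}(F)]^3$, and choosing $\kappa=p$ in \eqref{eqn:U2dofs-c} forces $p\equiv 0$. Combined with the vertex condition and the symmetry constraint $\skw\sigma=0$ built into $U_{r-2}^2(\Ta)$, this places $\sigma$ in $\mU_{r-2}^2(\Ta)$.

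For the interior stage, the exactness in Theorem~\ref{thm:strong-symmetry-loc} applied to \eqref{strong-symmetry-loc-0} gives $\div\sigma\in\mU_{r-3}^3(\Ta)$, so choosing $\kappa=\div\sigma$ in \eqref{eqn:U2dofs-d1} yields $\int_T|\div\sigma|^2\,{\dx}=0$ and hence $\div\sigma=0$. The same exactness then supplies $w\in\mU_r^1(\Ta)$ with $\inc w=\sigma$, i.e., $\sigma\in\inc\mU_r^1(\Ta)$. Taking $\kappa=\sigma$ in \eqref{eqn:U2dofs-d2} gives $\int_T\sigma:\sigma\,{\dx}=0$, so $\sigma\equiv 0$.

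The main obstacle I expect is in the boundary stage, where one must exploit that each boundary face of $T$ belongs to a unique subtetrahedron of the Alfeld split (so $\sigma n^F|_F$ is an ordinary polynomial of degree $r-2$, not a piecewise polynomial) to enable the $b_F$-factorization that reduces unisolvency on each face to a simple $L^2$-pairing test against the chosen $\kappa$. Once this recognition is in place, the interior stage is automatic, the volume dofs having been crafted precisely to pair with the two-step exact sequence $\mU_r^1\xrightarrow{\inc}\mU_{r-2}^2\xrightarrow{\div}\mU_{r-3}^3\to 0$.
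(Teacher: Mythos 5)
Your proof is correct and follows essentially the same route as the paper's: a dimension count, then the vertex/edge/face dofs (via the bubble factorizations you spell out, which the paper delegates to ``as in the proof of Lemma~\ref{lem:Z2unisolvency}'') force $\sigma n|_{\partial T}=0$ and hence $\sigma\in\mU_{r-2}^2(\Ta)$, after which the two volume dofs kill $\div\sigma$ and then $\sigma$ itself using the exactness of \eqref{strong-symmetry-loc-0}. Your choice of $\mU_{r}^1(\Ta)$ (rather than the paper's $\mU_{r-1}^1(\Ta)$) as the domain of $\inc$ in the final step is the index consistent with the exact sequence and with the stated dof count $2r^3-9r^2+7r+6$, so no issue there.
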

\begin{proof}
  The stated counts of the dofs (obtained using the exactness given in
  Theorem~\ref{thm:strong-symmetry-loc} and standard dimensions) sum
  up to the expression for $\dim U_{r-2}^2(\Ta)$ in
  Lemma~\ref{lem:Udims}.  To prove unisolvency, let all dofs
  of~\eqref{eqn:U2dofs} vanish for a
  $\sigma \in U_{r-2}^2(\Ta) \subset V_{r-2}^2(\Ta) \otimes \V$. Then,
  \eqref{eqn:U2dofs-a}--\eqref{eqn:U2dofs-c}, together with
  $\skw \sigma (x_i)=0$ yield $\sigma n |_{\partial T} = 0$ (as in the
  proof of Lemma~\ref{lem:Z2unisolvency}). Thus, by
  Lemma~\ref{lem:V_Vo}, $\sigma$ is in $\mV_{r-2}^2 \otimes \V$, and
  since $\skw \sigma \equiv 0$, we conclude that
  $\sigma \in \mU_{r-2}^2(\Ta)$.  By
  Theorem~\ref{thm:strong-symmetry-loc},
  $\mU^{3}_{r-3}(\Ta) = \dive \mU^{2}_{r-2}(\Ta)$, so the dofs of
  \eqref{eqn:U2dofs-d1} yield $\dive \sigma = 0$. Using the exactness
  result of Theorem~\ref{thm:strong-symmetry-loc} again, we conclude
  that there is a $u \in \mU_{r-1}^1(\Ta)$ such that
  $\sigma = \inc u$. Hence, using~\eqref{eqn:U2dofs-d2}, we conclude
  that $\sigma = 0$.
\end{proof}

Finally, the dofs of $U_{r-3}^3(\Ta)$ are just ``three
copies'' of the dofs of $V_{r-1}^3(\Ta)$. It is immediate that any
$w\in U_{r-3}^3(\Ta)$ is uniquely defined by the following
functionals:
\begin{subequations}
  \label{eq:U3dofs}  
  \begin{alignat}{4}
  \label{eq:U3dofs-a}  
  & \int_T {w \cdot \kappa} \, {\dx},
  & \quad \kappa \in \RM, & \qquad \qquad\dofcnt{(${6}$ dofs)},
  \\
  \label{eq:U3dofs-b}  
  &\int_T w \cdot v\, {\dx},
  & \quad\qquad  v \in \mU_{r-3}^3(\Ta),
  & \qquad \qquad \dofcnt{($ 2r^3 - 6r^2 + 4r - {6}$ dofs)}.
\end{alignat}
\end{subequations}

\subsection{Commuting projections}

In this subsection, we study the accompanying cochain projectors of
our elasticity complex \eqref{strong-symmetry-loc}.  These projections
are simply the canonical finite element interpolants, denoted by
$\Pi_j^U,$ $j=0, 1, 2, 3$, defined by the already given degrees of
freedom of $U^0_{r+1}(\Ta), U^1_r(\Ta), U_{r-2}^2(\Ta),$ and
$U_{r-3}^3(\Ta)$, respectively.  Note that $\Pi_3^U$ is just the $L^2$
projection into $U_{r-3}^3(\Ta)$ since
$\mU_{r-3}^3(\Ta) \oplus {\RM} = U_{r-3}^3(\Ta)$.

\begin{theorem}
  The following diagram commutes for the indicated degrees $r$:
\begin{equation}
  \begin{tikzcd}[column sep=small, row sep=large]
    \RM \arrow{r}{\subset}
    & C^{\infty}(\bar T)\otimes \V
    \ar{d}{\Pi_0^U}
    \arrow{r}{\veps}
    \arrow[dr, phantom, "{\scriptstyle{(r\ge 4)}}"]
    & [0.5em] C^{\infty}(\bar T)\otimes \mathbb{S}
    \arrow{r}{\inc}
    \ar{d}{\Pi_1^U}
    \arrow[dr, phantom, "{\scriptstyle{(r\ge 4)}}"]
    &[1em]
    C^{\infty}(\bar T) \otimes    \mathbb{S}
    \arrow{r}{\div}
    \ar{d}{\Pi_2^U}
    \arrow[dr, phantom, "{\scriptstyle{(r\ge 6)}}"]
    &[1em]
    C^{\infty}(\bar T) \otimes {\V}
    \arrow{r}
    \ar{d}{\Pi_3^U}
    &0
    \\
    \RM\arrow{r}{\subset}
    &[0.3em]
    U_{r+1}^0(\Ta)\arrow{r}{{\varepsilon}}
    &[0.3em]
    U_{r}^1(\Ta) \arrow{r}{\inc} 
    &[1em]
    U_{r-2}^2(\Ta) \arrow{r}{\dive}
    &[1em] U_{r-3}^3(\Ta)\arrow{r}{} &0.
  \end{tikzcd}
\end{equation}
\end{theorem}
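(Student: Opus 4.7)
The strategy parallels Theorem~\ref{thm:PiZ}: for each of the three squares, define the difference $u = D \circ \Pi^U_i \rho - \Pi^U_{i+1} \circ D \rho$ (with $D \in \{\veps, \inc, \dive\}$), observe that $u$ lies in the appropriate $U$-space by the complex structure (Theorem~\ref{thm:strong-symmetry-loc}), and then verify that every degree of freedom of that space vanishes on $u$; unisolvency (Lemma~\ref{lem:U0dofs}, \ref{lem:U1dofs}, \ref{lem:U2dofs}, or the dofs in \eqref{eq:U3dofs}) then forces $u \equiv 0$.

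For the first square ($r \ge 4$), take $u = \veps \Pi_0^U \rho - \Pi_1^U \veps \rho \in U_r^1(\Ta)$. The vertex dofs \eqref{eqn:revU1dofs-V1} follow from the second-order vertex dofs \eqref{eqn:U0_DOF1} of $\Pi_0^U$, and the vertex dofs \eqref{eqn:revU1dofs-V2} vanish since $\inc \veps = 0$. Edge dofs \eqref{eqn:revU1dofs-E1} and \eqref{eqn:revU1dofs-E2} reduce, via integration by parts along $e$ and identity \eqref{more1}, to edge dofs \eqref{eqn:U0_DOF21}--\eqref{eqn:U0_DOF22} of $\Pi_0^U$. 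The face dofs \eqref{eqn:revU1dofs-F2}--\eqref{eqn:revU1dofs-F1} are the technical core: using identities \eqref{more2}, \eqref{more3}, \eqref{more4}, and \eqref{id1}--\eqref{id2} of Lemma~\ref{lemma:id}, together with the integration-by-parts identities of Lemma~\ref{lem:incintgF}, one rewrites them to match face dofs \eqref{eqn:U0_DOF31}--\eqref{eqn:U0_DOF32b} of $\Pi_0^U$, with boundary terms on $\partial F$ cancelling via the already-processed edge dofs. The interior dofs \eqref{eqn:revU1dofs-T1} vanish since $\inc \veps \equiv 0$ while \eqref{eqn:revU1dofs-T2} matches \eqref{eqn:U0_DOF4}.

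For the second square ($r \ge 4$), let $v = \inc \Pi_1^U \rho - \Pi_2^U \inc \rho \in U_{r-2}^2(\Ta)$. Vertex dofs \eqref{eqn:U2dofs-a} match \eqref{eqn:revU1dofs-V2} of $\Pi_1^U$, and the normal-trace edge dofs \eqref{eqn:U2dofs-b} match \eqref{eqn:revU1dofs-F2}. The face dofs \eqref{eqn:U2dofs-c} decompose along $\kappa = \kappa_n n + \kappa_\F$: the normal-normal part, via \eqref{id1} and \eqref{eq:13}, matches face dofs \eqref{eqn:revU1dofs-F3-a}; the face-normal part, via \eqref{id2} and \eqref{eq:14}, matches \eqref{eqn:revU1dofs-F3-b}. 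The interior dof \eqref{eqn:U2dofs-d2} vanishes by the self-adjointness of $\inc$ against boundary-vanishing test functions $\kappa \in \mU_{r-1}^1(\Ta)$ combined with \eqref{eqn:revU1dofs-T1}; the interior dof \eqref{eqn:U2dofs-d1} vanishes because $\dive \inc = 0$ and by the defining property of $\Pi_2^U$ via \eqref{eqn:U2dofs-d1}.

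For the third square ($r \ge 6$), set $w = \dive \Pi_2^U \rho - \Pi_3^U \dive \rho \in U_{r-3}^3(\Ta)$. The interior dofs \eqref{eq:U3dofs-b} vanish from \eqref{eqn:U2dofs-d1} of $\Pi_2^U$ combined with the defining property \eqref{eq:U3dofs-b} of $\Pi_3^U$. For dofs \eqref{eq:U3dofs-a} with $\kappa \in \RM$, since $\Pi_3^U$ acts as the identity on $\RM$, integration by parts yields
\[
\int_T w \cdot \kappa
= \int_T \dive(\Pi_2^U \rho - \rho) \cdot \kappa
= -\int_T (\Pi_2^U \rho - \rho) : \grad \kappa
  + \sum_{F \in \Delta_2(T)} \int_F (\Pi_2^U \rho - \rho)\, n \cdot \kappa.
\]
The volume term vanishes because $\grad \kappa$ is skew-symmetric while $\Pi_2^U \rho - \rho$ is symmetric, and each face integral vanishes because $\kappa|_F \in \pol_1(F)^3 \subset [\pol_{r-5}(F)]^3$ (which requires $r \ge 6$), so \eqref{eqn:U2dofs-c} applies. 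I expect the main obstacle to be the face-dof reduction in the first square, where the quotients by $\pol_1(F)$ and $\RM(F)$ appearing in \eqref{eqn:revU1dofs-F3-a}--\eqref{eqn:revU1dofs-F3-b} correspond precisely to the degeneracies of the integration-by-parts formulas in Lemma~\ref{lem:incintgF}, as captured by the identities $\devF \rotF \RM(F) = 0$ and $\rotF \curlF [\RM(F)]' = 0$ established within the proof of Lemma~\ref{lem:U1dofs}.
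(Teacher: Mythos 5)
Your proposal follows the paper's proof essentially verbatim: each square is handled by showing that every dof of the target space vanishes on the difference and then invoking unisolvency, and your treatment of the third square (the volume term killed by symmetry against the skew matrix $\grad \kappa$ for $\kappa\in\RM$, the face terms killed by $\kappa|_F\in[\pol_{r-5}(F)]^3$, which forces $r\ge 6$) is exactly the paper's argument. One small correction of emphasis: the quotient/degeneracy issue you anticipate in the first square in fact arises in the \emph{second} square --- in the first square the dofs \eqref{eqn:revU1dofs-F2}--\eqref{eqn:revU1dofs-F3-b} vanish trivially because $\inc\circ\,\veps=0$ and the remaining face dofs of $U_r^1(\Ta)$ match those of $U_{r+1}^0(\Ta)$ by design (the genuinely technical part there is the edge dofs \eqref{eqn:revU1dofs-E1}--\eqref{eqn:revU1dofs-E2}), whereas in the second square the full test space $[\pol_{r-5}(F)]^3$ of \eqref{eqn:U2dofs-c} must be completed beyond the quotients appearing in \eqref{eqn:revU1dofs-F3-a}--\eqref{eqn:revU1dofs-F3-b} by means of Lemma~\ref{lem:incintgF} together with the edge dofs \eqref{eqn:revU1dofs-E1}--\eqref{eqn:revU1dofs-E2}, which is precisely where the identities $\devF\rotF\RM(F)=0$ and $\rotF\curlF[\RM(F)]'=0$ that you cite are used.
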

\begin{proof}
  First, we show that
  \begin{equation}
    \label{eq:19}
    \dive \Pi_2^U \sigma = \Pi_3^U \dive \sigma
    \qquad \text{ for } r \ge 6.    
  \end{equation}
  Since $w = \dive \Pi^U_2 \sigma - \Pi_3^U \dive \sigma$ is in
  $U_{r-3}^3(\Ta)$, it suffices to prove that the dofs of
  \eqref{eq:U3dofs} applied to $w$ vanish.  It is obvious
  from~\eqref{eqn:U2dofs-d1} that the dofs of~\eqref{eq:U3dofs-b}
  applied to $w$ vanish. The dofs of~\eqref{eq:U3dofs-a} also vanish
  because, for $\kappa \in \RM$
  \begin{align*}
    \int_T w   \cdot \kappa\, {\dx}
    & =
      \int_T \dive \Pi^U_2 \sigma \cdot \kappa   \,
      {\dx} - \int_T \dive \sigma  \cdot \kappa  \,      {\dx}
    && \text{ by~\eqref{eq:U3dofs-a}}
    \\
    & = \int_{\partial T}  (\Pi^U_2 \sigma) \, n  \cdot \kappa\,{\ds}
      - \int_{\partial T} \sigma\, n  \cdot \kappa \, {\ds} = 0
    && \text{ by~\eqref{eqn:U2dofs-c}}
  \end{align*}
  for $r \ge {6}$, so~\eqref{eq:19} follows.

  Next, we show that
  \begin{equation}
    \label{eq:20}
    \inc \Pi_1^U u = \Pi_2^U \inc u
    \qquad \text{ for } r \ge 4.    
  \end{equation}
  Let $\sigma= \inc \Pi_1^U u - \Pi_2^U \inc u$. To show that the dofs
  \eqref{eqn:U2dofs} vanish on $\sigma$, we begin by noting that
  \eqref{eqn:revU1dofs-V2} and \eqref{eqn:U2dofs-a} imply that the
  dofs \eqref{eqn:U2dofs-a} vanish for $\sigma.$ Similarly, the dofs
  \eqref{eqn:U2dofs-b} applied to $\sigma$ also vanish due to
  \eqref{eqn:revU1dofs-F2} and \eqref{eqn:U2dofs-b}.
  To show that the dofs of \eqref{eqn:U2dofs-c} also vanish on
  $\sigma$, we split them into normal and tangential parts after using
  \eqref{eqn:U2dofs-c} on $\inc u$:
  \begin{equation}
    \label{eq:22}
    \int_F
    \sigma n^\F \cdot  \kappa
    =g_n + g_\F, \quad
    g_n = \int_F \left[\inc( \Pi_1^U u - u) \right]_{nn} \kappa_n,
    \quad
    g_\F = 
    \int_F  \left[\inc( \Pi_1^U u - u) \right]_{\nF} \kappa_\F.     
  \end{equation}
  Note that $g_n$ vanishes for any
  $\kappa_n \in \pol_{r-5}(F) / \pol_1(F)$ due to
  \eqref{eqn:revU1dofs-F3-a}. In fact, $g_n$ vanishes for any
  $\kappa_n \in \pol_{r-5}(F)$, as we now show. Observe that by
  \eqref{eq:13} of Lemma~\ref{lem:incintgF}, for a
  $p_1 \in \pol_1(F)$,
  \begin{align}
    \label{eq:21}
    \int_F \sigma_{nn} \,p_1
    =  \int_{\partial F} \left[\curlF ( \Pi_1^U u - u)_\FF\right]
    t\, p_1\, ds
      + \int_{\partial F} ( \Pi_1^U u - u)_\FF t \cdot (\rotF p_1)' \,\ds.
  \end{align}
  By~\eqref{id4},
  $\curlF ( \Pi_1^U u - u)_\FF t\, p_1 = [\curl ( \Pi_1^U u -
  u)']_{\Fn} t \,p_1 = \curl ( \Pi_1^U u - u)': p_1 n t',$ so the
  first term on the right hand side of~\eqref{eq:21} vanishes by
  \eqref{eqn:revU1dofs-E2}.  The last term of~\eqref{eq:21} also
  vanishes because
  $( \Pi_1^U u - u)_\FF t \cdot (\rotF p_1)' = Q( \Pi_1^U u - u) Q t
  \cdot (\rotF p_1)' = ( \Pi_1^U u - u) Q t \cdot Q(\rotF p_1)' = (
  \Pi_1^U u - u) : Q(\rotF p_1)' t $ thus allowing us to apply
  \eqref{eqn:revU1dofs-E1} whenever $r -4 \ge 0$. Thus
  from~\eqref{eq:21} we conclude that $g_n=0$ for all
  $\kappa_n \in \pol_{r-5}(F) = \pol_1(F) \oplus \pol_{r-5}(F) /
  \pol_1(F)$.  Next consider $g_\F$. Obviously,
  \eqref{eqn:revU1dofs-F3-b} shows that $g_\F=0$ whenever
  $\kappa_\F \in [\pol_{r-5}(F)]^2/\RM(F)$. However, for
  $\kappa_\F \in \RM(F)$, we may conduct a similar argument as above
  but now using \eqref{eq:14} of Lemma~\ref{lem:incintgF}, to conclude
  that $g_\F =0$ for all $\kappa_\F \in [\pol_{r-5}(F)]^2$.  Thus,
  returning to~\eqref{eq:22}, we have
  $\int_F \sigma n^\F \cdot \kappa=0$ for all
  $\kappa \in [\pol_{r-5}(F)]^3$, i.e., all dofs of
  \eqref{eqn:U2dofs-c} applied to $\sigma$ vanish.  It is easy to see
  that the remaining dofs of \eqref{eqn:U2dofs-d2} and
  \eqref{eqn:U2dofs-d1} applied to $\sigma$ also vanish, thus
  finishing the proof of \eqref{eq:20}.

  Finally, we will prove that
  \begin{equation}
    \label{eq:23}
    \varepsilon(\Pi_0^U \omega) = \Pi_1^U \varepsilon  (\omega),
    \qquad \text{ for } r \ge 4.    
  \end{equation}
  Letting
  $u= \varepsilon(\Pi_0^U \omega) -\Pi_1^U \varepsilon (\omega)$, it
  is enough to show that the dofs of \eqref{eqn:revU1dofs} applied to
  $u$ vanish. Let us dispose off the obvious implications first: {\it
    (i)} $\inc \circ \,\varepsilon =0$ implies that the dofs of
  \eqref{eqn:revU1dofs-V2}, \eqref{eqn:revU1dofs-F2},
  \eqref{eqn:revU1dofs-F3-a}, \eqref{eqn:revU1dofs-F3-b} and
  \eqref{eqn:revU1dofs-T1} applied to $u$ vanish; {\it(ii)}
  \eqref{eqn:U0_DOF1}, \eqref{eqn:U0_DOF31}, \eqref{eqn:U0_DOF33},
  \eqref{eqn:U0_DOF32a}, \eqref{eqn:U0_DOF32b} applied to $\omega,$
  taken together with 
  \eqref{eqn:revU1dofs-V1}, \eqref{eqn:revU1dofs-F6},
  \eqref{eqn:revU1dofs-F4}, \eqref{eqn:revU1dofs-F1},
  \eqref{eqn:revU1dofs-F5} applied to $\veps(\omega)$, each 
    respectively imply that \eqref{eqn:revU1dofs-V1},
  \eqref{eqn:revU1dofs-F6}, \eqref{eqn:revU1dofs-F4},
  \eqref{eqn:revU1dofs-F1}, \eqref{eqn:revU1dofs-F5} applied to $u$
  vanish.  It only remains to prove that the dofs of
  \eqref{eqn:revU1dofs-E1} and \eqref{eqn:revU1dofs-E2} applied to $u$
  vanish.
  To this end, it is useful to employ the edge-based orthonormal basis
  $\{ n_e^+, n_e^-, t_e\}$ and write
  $\kappa \in \sym [\pol_{r-4}(e)]^{3 \times 3}$ as
  $ \kappa = \kappa_{11} n_e^{+} (n_e^{+})' + \kappa_{12} \big(n_e^+
  (n_e^-)' + n_e^- (n_e^+)'\big) + \kappa_{13} \big(n_e^+ t_e' + t_e
  (n_e^{+})'\big) + + \kappa_{22} n_e^-(n_e^-)' +
  \kappa_{23}\big(n_e^+(n_e^-)' + n_e^- (n_e^+)'\big) + \kappa_{33}
  t_e t_e'$ where $\kappa_{ij} \in \pol_{r-4}(e).  $ Then,
  \begin{alignat*}{2}
    \int_e u: \kappa
    & = \int_e [\varepsilon(\Pi^U_0 \omega)
    -\Pi^U_1\varepsilon(\omega)] : \kappa 
    = \int_e \varepsilon(\Pi^U_0 \omega -\omega) : \kappa
    & \text{by~\eqref{eqn:revU1dofs-E1}}
    \\
    & = \int_e \grad(\Pi^U_0  \omega - \omega) :  \kappa 
    = \int_e \grad(\Pi^U_0  \omega - \omega) : (\kappa_{13} n_e^+ t_e'
    + \kappa_{33} t_e t_e')
    &\quad \text{by~\eqref{eqn:U0_DOF22}}
    \\
    & =  \int_e \grad(\Pi^U_0  \omega - \omega)  t_e  \cdot ( \kappa_{13} n_e^+  +\kappa_{33} t_e). 
  \end{alignat*}
  Now that the integrand contains a tangential derivative, we may
  integrate by parts, to see that the integral vanishes after an
  application of~\eqref{eqn:U0_DOF1} and \eqref{eqn:U0_DOF21}.  Thus
  the dofs of \eqref{eqn:revU1dofs-E1} applied to $u$ vanish.  To
  examine the dofs \eqref{eqn:revU1dofs-E2}, letting
  $\kappa \in [{\pol}_{r-3}(e)]^3$, we note that
   \begin{alignat*}{2}
     \int_e (\curl u)'t_e \cdot \kappa=
     & \int_e \big[\curl \veps(\Pi_0^U \omega-\omega)\big]'t_e \cdot \kappa \quad && \text{by~\eqref{eqn:revU1dofs-E2}} \\
    =& \frac{1}{2} \int_e  \big[\grad \curl (\Pi_0^U \omega-\omega)\big] t_e \cdot \kappa  \quad && \text{by~\eqref{more1}} \\
    =&   -\frac{1}{2} \int_e  \curl (\Pi_0^U \omega-\omega)
    \cdot \partial_t\kappa \quad && \text{by~\eqref{eqn:U0_DOF1}}
    \end{alignat*}
    where in the last step, we have integrated by parts, and put
    $\partial_t \kappa = (\grad \kappa) t_e$. The curl in the
    integrand above can be decomposed into terms involving
    $\partial_t (\Pi_0^U \omega-\omega)$ and those involving
    $\partial_{n_e^\pm}(\Pi_0^U \omega-\omega)$. The former terms can
    be integrated by parts yet again, which after
    using~\eqref{eqn:U0_DOF1} and \eqref{eqn:U0_DOF21}, vanish. The
    latter terms also vanish by~\eqref{eqn:U0_DOF22}
    which we may apply as $\partial_t \kappa$ is of degree at most $r-4$.
\end{proof}

\section{Global complexes}   \label{sec:global}

We have developed a number of new finite elements on Alfeld splits in
the previous sections.  In this section, we briefly discuss how the
elements on Alfeld splits may be put together to construct global
finite element spaces.  Throughout this section, $\Omega$ denotes a
contractible polyhedral domain in $\R^3$, subdivided by $ \Th$, a
conforming tetrahedral mesh (and $h$ denotes maximal element
diameter).  Let $\Tha$ be the refinement obtained performing an Alfeld
split to each mesh tetrahedron $T\in \Th$, e.g., by connecting the
barycenter of $T$ with its vertices.  We consider finite element
spaces on $\Tha$ built using the previously discussed elements. Every
local dof we defined previously was associated to a subsimplex, so it
is standard to go from the local dofs to the global dofs associated to
the simplicial complex $\Th$. {\color{black}We use $\#_{k}$ to denote the number of $k$-dimensional simplexes in $\Th$. For example,  $\#_{0}$ and  $\#_{1}$ are the numbers of vertices and edges, respectively.}

\subsection{The global $V$ complex}

We begin with the standard finite element sequence.
Let $ W_{r}^0(\mcta)$, $W_{r}^1(\mcta)$, $W_{r}^2(\mcta)$, and
$W_{r}^3(\mcta)$, denote the standard conforming finite element
subspaces of $H^1(\Omega), H(\curl, \Omega), H(\div, \Omega), $ and
$L^2(\Omega)$ whose elements when restricted to a mesh element
$T \in \mct$ are in $ W_{r}^0(\Ta)$, $W_{r}^1(\Ta)$, $W_{r}^2(\Ta)$,
and $W_{r}^3(\Ta)$, respectively. Let 
\begin{alignat*}{1}
  V_{r}^0(\mcta)
  &=\{ \omega \in C^1(\Omega):
  \omega \text{ is } C^2 \text{ at vertices of } \mct,\;
  \omega|_T \in V_{r}^0(\Ta) \text{ for all } T \in \mct\},
  \\
  V_{r}^1(\mcta)
  &  =\{ \omega \in [C^0(\Omega)]^3:
  \text{ $\omega$ is $C^1$ at
    vertices of $\mct$}, \;
  \omega|_T \in
  V_{r}^1(\Ta) \text{ for all } T \in \mct \},
  \\
  V_{r}^2(\mcta)
  & = \{ \omega \in H(\div, \Omega):
  \text{ $\omega$ is $C^0$ at vertices of $\mct$},\;
  \omega|_T \in V_{r}^2(\Ta)
  \text{ for all } T \in \mct \},
  \\
  V_{r}^3(\mcta)
  & =   W_{r}^3(\mcta).
\end{alignat*}
After inheriting the global $V$  dofs from the prior local $V$ dofs, we may
define global interpolation operators into these finite element spaces
in the canonical way. Then the following global analogue of
Theorem~\ref{thm:PiV-local} can be easily proved.

\begin{theorem}
  Let $\Pih i V$ denote the canonical global finite element
  interpolant onto $V_{r-i}^i(\mcta)$. Then for $r\ge 5$ the
  following diagram commutes:
  \begin{equation*}
    \begin{tikzcd}
      {C^{\infty}(\Omega)}\arrow{r}{\grad}\arrow{d}{\Pih 0 V}
      &{[C^{\infty}(\Omega)]}^{3}\arrow{d}{\Pih 1 V} \arrow{r}{\curl}
      &{[C^{\infty}(\Omega)]}^{3}\arrow{r}{\div}\arrow{d}{\Pih 2 V}
      &{C^{\infty}(\Omega)}\arrow{d}{\Pih 3 V}
      \\
      V_r^0(\mcta)\arrow{r}{\grad} &V_{r-1}^1(\mcta)\arrow{r}{\curl}
      &V_{r-2}^2(\mcta)\arrow{r}{\div} &V_{r-3}^3(\mcta).
    \end{tikzcd} 
  \end{equation*}
\end{theorem}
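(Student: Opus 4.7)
The plan is to reduce the claimed global commuting diagram to the local diagram already established in Theorem~\ref{thm:PiV-local} via the standard ``gluing'' argument for canonical finite element interpolants. First I would verify that, with global dofs obtained by identifying local dofs on shared subsimplices of $\Th$, the canonical global interpolant $\Pih i V$ restricted to any macroelement $T \in \mct$ coincides with the local interpolant $\Pi_i^V$ on $T$:
\[
(\Pih i V u)\big|_T = \Pi_i^V(u|_T), \qquad T \in \mct, \; i=0,1,2,3.
\]
This identification follows from local unisolvency together with the observation that the inter-element smoothness encoded in the definition of $V_r^i(\mcta)$ is exactly matched by the continuity produced by the dofs: normal-component moments across a shared face for $H(\div)$-conformity of $V_r^2$, tangential continuity for $V_r^1$, $C^1$ continuity across faces of $V_r^0$ supplied by the normal-derivative moments in \eqref{eqn:C1DOF32}, and the elevated vertex smoothness enforced directly by the vertex derivative dofs in \eqref{eqn:C1DOF1}, \eqref{eqn:VecHermitedofs1}, and \eqref{eqn:StenbergHDiv1}.

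With the elementwise identification in hand, each of the three commuting squares is verified tetrahedron-by-tetrahedron. For any smooth $u$ on $\Omega$ and any $T \in \mct$, Theorem~\ref{thm:PiV-local} applied to $u|_T$, combined with the elementwise identity above, gives for instance
\[
(\grad \Pih 0 V u)\big|_T
= \grad\bigl(\Pi_0^V(u|_T)\bigr)
= \Pi_1^V\bigl(\grad u|_T\bigr)
= (\Pih 1 V \grad u)\big|_T,
\]
and analogous elementwise identities for the $\curl$ and $\dive$ squares. Since both sides of each identity lie in the appropriate conforming global space and coincide on every macroelement, they are equal as functions on $\Omega$.

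The main obstacle I anticipate is the first step: confirming that the particular local dofs in Section~4, tailored to the spaces $V_{r-i}^i(\Ta)$ on the Alfeld split of a single tetrahedron, patch together consistently into the global spaces $V_{r-i}^i(\mcta)$ with the stipulated vertex-wise super-smoothness at vertices of $\mct$ (as opposed to merely at vertices of some subtetrahedron of an $\Ta$). This is a bookkeeping task that must be carried out for each of the four form degrees, verifying both that the shared-subsimplex dofs suffice to produce the stated continuity and that no spurious constraints arise from the matching. Once that verification is complete, the commuting diagram is an immediate corollary of Theorem~\ref{thm:PiV-local} and requires no additional differential calculus at the global level.
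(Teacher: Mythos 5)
The paper offers no written proof of this theorem — it simply states that the global dofs are inherited from the local ones and that the result "can be easily proved" as the global analogue of Theorem~\ref{thm:PiV-local} — and your argument is precisely the standard elementwise reduction the authors intend: identify $(\Pih i V u)|_T$ with $\Pi_i^V(u|_T)$ via the shared-subsimplex dofs and then invoke the local commuting diagram on each macroelement. Your proposal is correct and takes essentially the same approach as the paper, merely making explicit the gluing verification that the paper dismisses as standard.
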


An exactness result analogous to Lemma~\ref{lem:exactVo} also holds
for these global $V$ spaces.  In order to prove it, we are not able to
use the projections $\Pih i V$ directly, since the functions we will
apply them to are not sufficiently regular. This technical problem is
overcome in the proof below by zeroing out the degrees of freedom
requiring higher regularity and
using the well-known existence of a
regular potential
(see e.g, \cite{costabel2010bogovskiui}):
\begin{align}
  \label{eq:H1_rgtinvdiv}
  & \forall  u \in L^2(\Omega), \; \exists
    v\in [H^{1}(\Omega)]^{3}\text{ such that }\div v=u.
\end{align}
\revjj{With the above-mentioned modified interpolant
  and~\eqref{eq:H1_rgtinvdiv}, the global exactness
  for $r\ge 5$ follows easily
  as seen below. The $r=4$ case is also interesting, but since no
  local dofs for gluing $V_{r}^{0}(T^{A})$ are known for this case,
  the same proof does not work.  Yet, we are able to prove the partial
  exactness result that $\div: V_{2}^2(\mcta)\to V_{1}^3(\mcta)$ is
  onto when $r=4$, using a technique inspired by Stenberg~\cite[Theorem
  1]{stenberg2010nonstandard}, who showed how dofs in standard mixed
  methods (for the Poisson problem) can be reduced by imposing vertex
  continuity. Our $V_{r-2}^2(\mcta)$ space has similar continuity
  restrictions at the vertices of $\Th$.}


\begin{theorem}\label{exact-global-V}
  The sequence
  \begin{equation}\label{globalV}
    \begin{tikzcd}[cramped]
      0\arrow{r}{}
      &\mathbb{R}\arrow{r}{\subset}
      & V_r^0(\mcta)\arrow{r}{\grad} &V_{r-1}^1(\mcta)\arrow{r}{\curl} &V_{r-2}^2(\mcta)\arrow{r}{\div} &V_{r-3}^3(\mcta)\arrow{r}{} &0,
    \end{tikzcd} 
  \end{equation}
  is exact for $r \ge 5$. \revjj{When $r=4$,
    the divergence operator remains surjective.}
\end{theorem}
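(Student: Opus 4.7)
My plan is to leverage the exactness of the standard de~Rham sequence $\R\to W_r^0(\mcta)\to W_{r-1}^1(\mcta)\to W_{r-2}^2(\mcta)\to W_{r-3}^3(\mcta)\to 0$ on the contractible~$\Omega$, combined with the additional regularity built into the $V$-spaces and a modified interpolant. The relation $\ker(\grad, V_r^0(\mcta))=\R$ is immediate since $\Omega$ is connected, and the successive compositions in~\eqref{globalV} vanish trivially. For exactness at $V_{r-1}^1(\mcta)$ when $r\ge 5$, I would take $v\in V_{r-1}^1(\mcta)$ with $\curl v=0$ and use exactness of the $W$-sequence to produce $\phi\in W_r^0(\mcta)$ with $\grad\phi=v$; the regularity of $v$ (namely $v\in[C^0(\Omega)]^3$ and $C^1$ at vertices of $\mct$) promotes $\phi$ to a $C^1$-function that is $C^2$ at vertices of $\mct$, while the piecewise-polynomial structure inherited from $v|_T\in V_{r-1}^1(\Ta)$ places $\phi|_T$ in $V_r^0(\Ta)$. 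Hence $\phi\in V_r^0(\mcta)$.

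The crux of the $r\ge 5$ case is divergence surjectivity. Given $u\in V_{r-3}^3(\mcta)=W_{r-3}^3(\mcta)$, I would use~\eqref{eq:H1_rgtinvdiv} to obtain $v\in[H^1(\Omega)]^3$ with $\div v=u$, and then apply a modified interpolant $\Pith{2}{V}:[H^1(\Omega)]^3\to V_{r-2}^2(\mcta)$ obtained from the canonical $\Pih{2}{V}$ by zeroing out the vertex-value dofs~\eqref{eqn:StenbergHDiv1} (whose evaluation requires pointwise values not available for $H^1$ functions); the remaining dofs~\eqref{eqn:StenbergHDiv2}--\eqref{eqn:StenbergHDiv5} are evaluated through trace arguments. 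Because the interior divergence dof is preserved by the modification, the commuting relation $\div\Pith{2}{V} v=\Pih{3}{V}\div v$ still holds. Since $\Pih{3}{V} u=u$ (as $u$ already lies in $W_{r-3}^3(\mcta)$), we conclude that $w=\Pith{2}{V} v\in V_{r-2}^2(\mcta)$ satisfies $\div w=u$.

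For exactness at $V_{r-2}^2(\mcta)$, given $w\in V_{r-2}^2(\mcta)\cap\ker(\div)$, I would apply the curl analogue of the above. An $H^1$-regular right inverse of~$\curl$ on the simply connected $\Omega$ yields $\bar u\in[H^1(\Omega)]^3$ with $\curl\bar u=w$. The modified interpolant $\Pith{1}{V}$ into $V_{r-1}^1(\mcta)$, built by zeroing the vertex dofs~\eqref{eqn:VecHermitedofs1} in the canonical $\Pih{1}{V}$, then satisfies $\curl\Pith{1}{V}\bar u=\Pith{2}{V} w$. The residual $w-\Pith{2}{V} w$ is a divergence-free element of $V_{r-2}^2(\mcta)$ whose only nonzero dofs are the vertex values inherited from $w$; using the local exactness Lemma~\ref{lem:exactV} together with a gluing argument based on the single-valuedness of the vertex data, I expect to produce a global $V_{r-1}^1(\mcta)$-correction whose curl matches this residual. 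The hardest step will be verifying that the tetrahedron-wise preimages assemble into a $C^0$-function with the required $C^1$-at-vertex regularity of $V_{r-1}^1(\mcta)$.

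For the $r=4$ case, only $\div$ surjectivity is claimed, since $V_4^0(\mcta)$ has no clean dof description for gluing, blocking the argument at $V_{r-1}^1(\mcta)$. Following Stenberg's idea in~\cite[Theorem~1]{stenberg2010nonstandard}, I would lift $u\in V_1^3(\mcta)=W_1^3(\mcta)$ to $v\in[H^1(\Omega)]^3$ via~\eqref{eq:H1_rgtinvdiv} and apply a BDM-like interpolation into $W_2^2(\mcta)$ whose vertex dofs are modified so that the result is single-valued at the vertices of $\mct$ and hence lies in $V_2^2(\mcta)$; divergence commutativity is preserved, so the divergence of the interpolant equals $u$. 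The main obstacle will be to carry out Stenberg's vertex-continuity-imposing dof reduction in the present setting and verify that it produces a well-defined surjection onto $V_1^3(\mcta)$.
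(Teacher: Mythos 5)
Your treatment of exactness at $V_{r-1}^1(\mcta)$ and of divergence surjectivity follows the paper's route: the former by lifting through the standard $W$-sequence and promoting regularity at vertices, the latter via \eqref{eq:H1_rgtinvdiv} and a modified interpolant $\Pith{2}{V}$ with zeroed vertex dofs (the paper additionally rearranges the edge/face dofs---dropping one edge $e_F$ per face and raising the face moments to $\pol_{r-4}(F)$---precisely so that the same operator also covers $r=4$; your separate ``BDM-like'' construction for $r=4$ is in the same Stenberg spirit but is left at the level of a plan).

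The genuine gap is at the stage $\curl V_{r-1}^1(\mcta)=\ker(\div,V_{r-2}^2(\mcta))$. First, the modified interpolant $\Pith{1}{V}$ you propose cannot be defined on $[H^1(\Omega)]^3$ by zeroing only the vertex dofs \eqref{eqn:VecHermitedofs1}: the remaining dofs \eqref{eqn:VecHermitedofs2}--\eqref{eqn:VecHermitedofs3} are integrals over one-dimensional edges, and in three dimensions $H^1$ functions have no edge traces, so these functionals are unbounded on the regularity class delivered by the right inverse of $\curl$. (This is exactly why the paper's $\Pith{2}{V}$ works for the divergence step---it needs only face and volume moments, which are controlled by the $H^1$ trace theorem---and why the same trick does not transfer to the $1$-form level.) Second, even granting such an interpolant, the commuting identity $\curl\Pith{1}{V}\bar u=\Pith{2}{V}\curl\bar u$ for the \emph{modified} operators would require a separate verification, and the ``gluing argument'' for the residual $w-\Pith{2}{V}w$---assembling tetrahedron-wise curl-preimages into a globally $C^0$ field that is $C^1$ at the vertices of $\mct$---is precisely the hard content of the claim and is only stated as an expectation. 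The paper avoids all of this: having established exactness at $V_{r-1}^1$ and surjectivity of $\div$, it computes the global dimensions of all four $V$-spaces in terms of $\#_0,\#_1,\#_2,\#_3$ from the local dof counts and invokes the Euler formula to show the alternating sum equals $1$, whence $\dim\curl V_{r-1}^1(\mcta)=\dim\ker(\div,V_{r-2}^2(\mcta))$ and the inclusion $\curl V_{r-1}^1(\mcta)\subseteq\ker(\div,V_{r-2}^2(\mcta))$ is promoted to equality. You would need either to adopt this counting argument or to supply a genuinely new construction for the missing gluing step.
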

\begin{proof}
  To show that $\div: V_{r-2}^2(\mcta) \rightarrow V_{r-3}^3(\mcta)$
  is onto, let $v \in V_{r-3}^3(\mcta)$. By~\eqref{eq:H1_rgtinvdiv},
  there exists an $\omega \in [H^1(\Omega)]^3$ such that
  $\dive \omega=v$.  \revjj{We now proceed to modify $\Pih 2 V$ and
    apply it to $\omega$. The first modification involves zeroing out
    vertex dofs to avoid taking values of $\omega$ at the vertices.
    The second modification involves a rearrangement of $r-3$ face
    dofs that helps prove the theorem's assertion for the $r=4$
    case. These modifications result in the $\Pith 2 V$ given next. For
    every $F \in \Delta_2(T)$, arbitrarily choose an edge of $F$ and
    denote it by~$e_{F}$.}  Then define
  $\Pith 2 V \omega \in V_{r-2}^2(\mcta)$ such that on an element
  $ T \in \Th$, the function $\omega_T = (\Pith 2 V \omega)|_T$ is
  given by the equations
  \begin{subequations}
  \begin{alignat}{4}
    \label{eqn:StenbergHDiv1-g}
    &\omega_T(a)=0,\quad && a\in \Delta_0(T)
    \\
    \label{eqn:StenbergHDiv2-g}
    &\int_e (\omega\cdot n^\F) \kappa\, {\ds},\ && \kappa \in
\pol_{r-4}(e), \; {\color{black} e\in \Delta_1(F), \; e\neq e_{F}},\;  F\in \Delta_2(T)\\
\label{eqn:StenbergHDiv3-g}
&\int_F (\omega\cdot n^\F)\kappa\, {\dA}, \quad &&{\color{black} \kappa\in
{\pol}_{r-4}(F)}, \; F\in \Delta_2(T)\\
     \label{eqn:StenbergHDiv4-g}
    &\int_T \omega_T \cdot \kappa\, {\dx}
    = \int_T \omega \cdot \kappa\, {\dx},
    \quad &&  \kappa \in \curl \mV_{r-1}^1(\Ta)
    \\
    \label{eqn:StenbergHDiv5-g}
    &\int_T (\Div\omega_T) \,\kappa \, {\dx}
    = \int_T (\Div\omega) \, \kappa \, {\dx},
    \quad && \kappa \in \mV_{r-3}^3 (\Ta) 
  \end{alignat}
\end{subequations}
These equations, \revjj{being a minor modification of previously given 
unisolvent dofs~\eqref{eqn:StenbergHDiv}, can easily be shown to}
uniquely define a $\omega_T \in V_{r-2}^2(\Ta)$, so
$\Pith 3 V \omega$ is a well defined function in $V_{r-2}^2(\mcta)$
for any $\omega$ in $H^1(\om)^3$ (e.g., the integral on the right hand
side of \eqref{eqn:StenbergHDiv3-g} is bounded for any $\omega$ in
$H^1(\om)^3$ by a trace theorem).  Now, \revjj{when $r\ge 4$,} for any
constant $\kappa$, we have
$ \int_T \div( \omega_T - \omega) \kappa = \int_{\partial T} (
\omega_T - \omega) \cdot n \kappa = 0 $ by
\eqref{eqn:StenbergHDiv3-g}. Hence \eqref{eqn:StenbergHDiv5-g} yields
$\dive \Pith 2 V \omega= \Pih 3 V \dive \omega= \Pih 3 V v = v$.
\revjj{This
proves the stated surjectivity of divergence for $r= 4$ as well as for
$r\ge 5$.}

\revjj{Continuing, restricting to the $r\ge 5$ case,}
for any $u\in {\ker} (\curl, V_{r-1}^1(\mcta))$, there exists
$v\in W_{r}^{0}(\mcta)$ such that $u=\grad v$ by the exactness of the
standard finite element de Rham complex (the $W$ sequence).  Since $u$
is $C^{1}$ at the vertices of $\Th$, $v$ is $C^{2}$ at the vertices,
so $v\in V_{r}^0(\mcta)$.

Finally,
  to show that
  $\curl V_{r-1}^1(\mcta) = {\ker}(\div, V_{r-2}^2(\mcta)),$
  it suffices to prove that their dimensions are equal.
  To this end, we note from \eqref{eqn:Sigmadofs}, \eqref{eqn:VecHermitedofs}, \eqref{eqn:StenbergHDiv}, and \eqref{Wdofs} that the following dimension count holds:
\begin{align*}
\dim
  (V_{r}^{0}(\mcta))&=10\#_{0}+(3r-13)\#_{1}+(r^{2}-7r+13)\#_{2}+\frac
                      2 3 (r-4)(r-3)(r-2)\#_{3},\\
\dim (V_{r-1}^{1}(\mcta))&=12\#_{0}+3(r-4)\#_{1}+\frac 3 2 (r-2)(r-3)\#_{2}+(2r^{3}-9r^{2}+19r-27)\#_{3},\\
  \dim (V_{r-2}^{2}(\mcta))&=3\#_{0}+\frac 1 2 (r+2)(r-3) \#_{2}
                             +(2r^{3}-5r^{2}+3r-12)\#_{3},\\
\dim (V_{r-3}^{3}(\mcta))&=\frac 2 3 r(r-1)(r-2)\#_{3}.
\end{align*}
By the exactness properties we have already proven,
$\dim \curl V_{r-1}^1(\mcta) = \dim V_{r-1}^1(\mcta) - \dim V_r^0(\mcta) +
1$ and
$\dim {\ker}(\div, V_{r-2}^2(\mcta)) = \dim V_{r-2}^2(\mcta) - \dim
V_{r-3}^2(\mcta)$. These numbers are equal because the Euler formula,
together with the dimensions given above, yields
$ \dim (V_{r}^{0}(\mcta))-\dim (V_{r-1}^{1}(\mcta))+\dim
(V_{r-2}^{2}(\mcta))-\dim (V_{r-3}^{3}(\mcta))=1$.
\end{proof}

\subsection{The global $Z$ complex}

Let $Z_{r}^0(\mcta) =V_r^0(\mcta),$ and 
\begin{alignat*}{1}
Z_{r}^1(\mcta) &=\{ \omega\in [C^0(\Omega)]^3:
\curl \omega \in [C^0(\Omega)]^3, \;
 \text{$\omega$ and $\curl \omega$ are $C^1$ at vertices of $\mct$},
 \\
 &\hspace{3.1cm}\text{ and }
 \omega|_T \in Z_{r}^1(\Ta) \text{ for all } T \in \mct \},
 \\
 Z_{r}^2(\mcta) &= \{ \omega \in [C^0(\Omega)]^3:
 \text{ $\omega$ is $C^1$ at vertices of $\mct$},\;
 \omega|_T \in Z_{r}^2(\Ta) \text{ for all } T \in \mct\},
 \\
 Z_{r}^3(\mcta)& =\{ \omega \in L^2(\Omega): \;
  \text{ $\omega$ is $C^0$ at vertices of $\mct$}, \;
  \omega|_T \in Z_{r}^3(\Ta) \text{ for all } T \in \mct, 
\}.
\end{alignat*}
These spaces inherit global dofs from the previously given local $Z$
dofs. The following global analogue of Theorem~\ref{thm:PiZ} can
be easily proved.

\begin{theorem} \label{exact-global-Z}
  Let $\Pih i Z$ denote the canonical global finite element
  interpolant onto $Z_{r-i+1}^i(\mcta)$. Then for \revj{$r\ge 4$} the
  following diagram commutes:
  \begin{equation*}
    \begin{tikzcd}
     {C^{\infty}(\Omega)}\arrow{r}{\grad}\arrow{d}{\Pi^{Z}_{0}}
     &{[C^{\infty}(\Omega)]}^{3}\arrow{d}{\Pi^{Z}_{1}}
     \arrow{r}{\curl}
     &{[C^{\infty}(\Omega)]}^{3}\arrow{r}{\div}\arrow{d}{\Pi^{Z}_{2}}
     &{C^{\infty}(\Omega)}\arrow{d}{\Pi^{Z}_{3}}
     \\
     Z_{r+1}^0(\mcta)\arrow{r}{\grad}
     &Z_r^1(\mcta)\arrow{r}{\curl} &Z_{r-1}^2(\mcta)\arrow{r}{\div}
     &Z_{r-2}^3(\mcta).
    \end{tikzcd} 
  \end{equation*}
\end{theorem}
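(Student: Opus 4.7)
The plan is to reduce the global commutativity to the local version established in Theorem~\ref{thm:PiZ}. The essential input is that every dof of the local spaces $Z^i_{r+1-i}(\Ta)$ listed in Section~\ref{sec:second} is attached to a specific subsimplex of $T$ and its value on a smooth function depends only on the restriction of that function (and its derivatives) to that subsimplex. This locality is the mechanism through which the local commuting property lifts to the global one.

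First, I would verify that the bottom sequence of the diagram is a complex, i.e., $\grad Z^0_{r+1}(\mcta)\subseteq Z^1_r(\mcta)$, $\curl Z^1_r(\mcta)\subseteq Z^2_{r-1}(\mcta)$, and $\div Z^2_{r-1}(\mcta)\subseteq Z^3_{r-2}(\mcta)$. Each inclusion combines the local inclusions~\eqref{eq:Zcontains} with the global regularity built into the space definitions. For instance, if $\omega\in Z^0_{r+1}(\mcta)$, then $\omega\in C^1(\Om)$ gives $\grad\omega\in [C^0(\Om)]^3$, while $\omega$ being $C^2$ at vertices of $\mct$ forces $\grad\omega$ to be $C^1$ at those vertices, and the identity $\curl\grad=0$ supplies the additional regularity of $\curl\grad\omega$ demanded by $Z^1_r(\mcta)$.

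Next, I would show that the canonical interpolant $\Pi^Z_i$ is well-defined as a map into $Z^i_{r+1-i}(\mcta)$. Because each local dof is intrinsic to its subsimplex, two neighboring tetrahedra assign the same dof values on any shared subsimplex $\tau$; and by the unisolvency results of Lemmas~\ref{lem:Z2unisolvency} and~\ref{lem:Z1unisolvency} (together with the straightforward counterparts for $Z^0_{r+1}(\Ta)$ and $Z^3_{r-2}(\Ta)$), the dofs on $\partial T$ determine the trace of the local interpolant on $\partial T$ uniquely. Hence the piecewise-defined interpolant is single-valued across faces of $\mct$. The $C^1$-at-vertices-of-$\mct$ requirement on $\curl\omega$ in $Z^1_r(\mcta)$ is enforced by the vertex dofs $D^\beta\curl\omega(a)$ in~\eqref{Z1dof1}, and the corresponding vertex smoothness conditions for the other spaces come from the vertex dofs in~\eqref{eqn:Sigmadofs}, \eqref{Z2dofs}, and \eqref{Z3dofs}.

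Finally, the commuting property follows element by element: for each $T\in\mct$, the restriction $(\Pi^Z_i\omega)|_T$ satisfies the same local dof equations as the local canonical interpolant of $\omega|_T$, so the two coincide. Applying Theorem~\ref{thm:PiZ} on $T$ yields $(\grad\Pi^Z_0\omega)|_T=(\Pi^Z_1\grad\omega)|_T$ and its analogues for $\curl$ and $\div$, giving the global commuting diagram. The main obstacle will be the well-definedness step, namely verifying that the face dofs are rich enough to force continuity of $\curl\omega$ across faces of $\mct$ (as opposed to merely across the internal faces of an Alfeld split); this is extracted from the boundary-trace argument in the proof of Lemma~\ref{lem:Z1unisolvency} where vanishing face, edge, and vertex dofs were shown to force $\curl\omega\times n$ to vanish on $\partial T$.
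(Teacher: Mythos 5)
Your proposal is correct and is essentially the argument the paper intends: the paper gives no written proof here, asserting only that the global analogue of Theorem~\ref{thm:PiZ} ``can be easily proved,'' and your reduction---locality of the subsimplex-attached dofs, well-definedness and conformity of the glued interpolant extracted from the boundary-trace portion of the unisolvency proofs, then elementwise application of the local commuting diagram---is exactly that easy proof. The only loose end is the degree range: Theorem~\ref{thm:PiZ} is stated for $r\ge 5$ while the global theorem claims $r\ge 4$, so for $r=4$ you should add the observation that the local dof sets and unisolvency results (Lemmas~\ref{lem:Z1unisolvency} and~\ref{lem:Z2unisolvency}, and the $Z^0_{r+1}$ dofs, which require only $r+1\ge 5$) remain valid and the proof of Theorem~\ref{thm:PiZ} goes through unchanged in that case.
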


\begin{theorem}
  The sequence
  \begin{equation}\label{globalZ}
    \begin{tikzcd}[cramped]
      0\arrow{r}{} &\mathbb{R}\arrow{r}{\subset} & Z_{r+1}^0(\mcta)\arrow{r}{\grad} &Z_r^1(\mcta)\arrow{r}{\curl} &Z_{r-1}^2(\mcta)\arrow{r}{\div} &Z_{r-2}^3(\mcta)\arrow{r}{} &0
    \end{tikzcd} 
  \end{equation}
  is exact for \revj{$r \ge 4$}.
\end{theorem}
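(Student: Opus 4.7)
My plan is to mirror the structure of the proof of Theorem~\ref{exact-global-V}. First, the complex property is immediate, and $\ker(\grad\colon Z_{r+1}^0(\mcta)\to Z_r^1(\mcta)) = \mathbb{R}$ follows from connectedness of $\Omega$. For exactness at $Z_r^1(\mcta)$, I would take $u\in Z_r^1(\mcta)$ with $\curl u=0$, and invoke the exactness of the standard finite element de~Rham sequence on $\mcta$ to produce $v\in W_{r+1}^0(\mcta)$ with $\grad v=u$. The hypotheses $u\in [C^0(\Omega)]^3$ and ``$u$ is $C^1$ at vertices of $\mct$'' then immediately upgrade $v$ to $C^1(\Omega)$ and $C^2$ at those vertices, placing $v\in V_{r+1}^0(\mcta)=Z_{r+1}^0(\mcta)$.

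The heart of the proof is surjectivity of $\div\colon Z_{r-1}^2(\mcta)\to Z_{r-2}^3(\mcta)$. Rather than imitating the modified-interpolant construction of the $V$-proof (which is complicated here by the vertex-point dofs of $Z^3$), my plan is to exploit the commuting diagram of Theorem~\ref{exact-global-Z}. Given $v\in Z_{r-2}^3(\mcta)$, the key step is to build a smooth preimage $\tilde v\in C^\infty(\bar\Omega)$ with $\Pih 3 Z \tilde v=v$. This can be done by standard dof-matching: first use $C^\infty$ bump functions localized near each vertex $a\in \Delta_0(\mct)$ to enforce $\tilde v(a)=v(a)$, then add $C^\infty$ interior bubbles on each $T\in\mct$ to match the remaining integral dofs $\int_T \tilde v$ and $\int_T \tilde v\,\kappa$ for $\kappa\in \mathring Z_{r-2}^3(\Ta)$. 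With such $\tilde v$ in hand, the smooth Poincar\'e lemma on the contractible $\Omega$ supplies $\tilde\omega\in [C^\infty(\bar\Omega)]^3$ with $\div\tilde\omega=\tilde v$. Setting $\omega=\Pih 2 Z\tilde\omega\in Z_{r-1}^2(\mcta)$, the commuting diagram of Theorem~\ref{exact-global-Z} yields $\div\omega=\Pih 3 Z\div\tilde\omega=\Pih 3 Z \tilde v=v$.

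Finally, exactness at $Z_{r-1}^2(\mcta)$ will follow by a dimension count, exactly parallel to the final step of Theorem~\ref{exact-global-V}. By rank--nullity, combined with the already-established $\ker\grad=\mathbb{R}$, exactness at $Z_r^1(\mcta)$, and surjectivity at $Z_{r-2}^3(\mcta)$, the equality $\curl Z_r^1(\mcta)=\ker(\div, Z_{r-1}^2(\mcta))$ is equivalent to the Euler identity
\[
\dim Z_{r+1}^0(\mcta)-\dim Z_r^1(\mcta)+\dim Z_{r-1}^2(\mcta)-\dim Z_{r-2}^3(\mcta)=1.
\]
Summing the local dof counts from \eqref{eqn:Sigmadofs} (with $r+1$ in place of $r$), \eqref{Z1dofs}, \eqref{Z2dofs}, and \eqref{Z3dofs} over subsimplices of $\mct$, the per-simplex contributions to the alternating sum work out to $+1$ (vertices), $-1$ (edges), $+1$ (faces), and $-1$ (tetrahedra). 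So the identity reduces to Euler's formula $\#_0-\#_1+\#_2-\#_3=1$ for the contractible~$\Omega$.

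The main obstacle I expect is the smooth preimage construction $\tilde v$ with $\Pih 3 Z \tilde v=v$. Unlike the $V$-complex argument where $V^3$ carries no vertex dofs, the vertex-point dofs of $Z^3$ require an additional localization step using $C^\infty$ bump functions; beyond that, the rest of the argument is a direct adaptation of the $V$-proof together with some routine algebra to verify the per-simplex Euler contributions.
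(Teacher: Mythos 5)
Your proposal is correct, and two of its three steps (exactness at $Z_r^1(\mcta)$ via the standard $W$-sequence plus the smoothness upgrade, and exactness at $Z_{r-1}^2(\mcta)$ via the Euler-formula dimension count, whose per-simplex contributions $+1,-1,+1,-1$ do check out against the paper's tabulated dimensions) coincide with what the paper does. The surjectivity of $\div$, however, is handled by a genuinely different route. The paper takes the $H^1$ regular potential $\omega$ from \eqref{eq:H1_rgtinvdiv} and builds a modified quasi-interpolant $\Pith{2}{Z}$ by zeroing out exactly those dofs of \eqref{Z2dofs} that are unbounded on $[H^1(\Omega)]^3$ (vertex values, the trace-free part of $\grad\omega$ at vertices, edge moments), while retaining the one vertex quantity that matters for the divergence, namely $\tr(\grad\omega)(a)=\div\omega(a)=v(a)$, which is well defined precisely because $v$ is already a finite element function that is $C^0$ at vertices; the conclusion $\div\Pith{2}{Z}\omega=v$ then follows by the same integration-by-parts argument as in Theorem~\ref{thm:PiZ}. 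You instead lift $v$ to a globally smooth $\tilde v$ with $\Pih{3}{Z}\tilde v=v$, solve $\div\tilde\omega=\tilde v$ smoothly, and push down through the canonical interpolants using the commuting diagram of Theorem~\ref{exact-global-Z}. Your route is valid: the surjectivity of $\Pih{3}{Z}$ on $C^\infty(\bar\Omega)$ follows from the linear independence of the point-evaluation and moment functionals on $C^\infty(\bar\Omega)$, exactly as your bump-plus-bubble construction indicates, and the commuting diagram then does the rest. What the paper's approach buys is that it works at the minimal $H^1$ regularity of the potential and is self-contained (the same zeroing-out device is reused for the $V$- and $U$-complexes); what your approach buys is that it avoids introducing a new modified interpolant and unisolvency check, at the cost of leaning on the global commuting diagram (which the paper states but only sketches) and on the extra smooth-lift step.
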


\begin{proof}
  Let $\mathcal{A}$ denote the set of all (constant) trace-free
  $3 \times 3$ matrices.  To show that $\dive$ is onto, let
  $v \in Z_{r-2}^3(\mcta)$. By~\eqref{eq:H1_rgtinvdiv}, there exists
  an $\omega \in [H^1(\Omega)]^3$ such that $\dive \omega=v$.  Let
  $\Pith 2 Z \omega \in Z_{r-1}^2(\mcta)$ be such that on each
  $T \in \mcta$, its element restriction
  $\omega_T = (\Pith 2 Z \omega)|_T$ satisfies
\begin{subequations}
\begin{alignat*}{4}
  &\omega_T(a)=0,
  && && a\in \Delta_0(T),
  \\
  &(\grad \omega_T)(a): \kappa =0,   &&  \kappa \in \mathcal{A},
  && a\in \Delta_0(T),\qquad && 
  \\
  &\tr(\grad \omega_T)(a)=\tr(\grad \omega)(a),
  && &&a\in \Delta_0(T),\qquad &&
  \\
&\int_e \omega_T\cdot\kappa\,{\ds} =0,\quad &&  \kappa\in [\pol_{r-5}(e)]^3, &&   e\in \Delta_1(T),\qquad &&
\hspace{-.5cm}\\
&\int_F \omega_T \cdot \kappa\,{\ds}= \int_F \omega \cdot \kappa\,{\ds}, \quad &&  \kappa\in [\pol_{r-4}(F)]^3, &&   F\in \Delta_2(T),\qquad &&
\hspace{-1cm}\\
&\int_T \omega_T \cdot \kappa\, {\dx}= \int_T  \omega \cdot \kappa\, {\dx}, \quad &&  \kappa\in \curl \mZ_r^1(\Ta),
&& &&\hspace{-2.25cm}\\
&\int_T \dive  \omega_T \cdot \kappa\, {\dx} =\int_T \dive \omega \cdot \kappa\, {\dx},  \qquad &&  \kappa\in \dive \mZ_{r-1}^2(\Ta).
&& && \hspace{-2.25cm} 
\end{alignat*}
\end{subequations}
\revjj{Here we have used
the same technique of zeroing out certain dofs that we used in 
the proof of Theorem~\ref{exact-global-V}}.
The right hand sides of the equations above are bounded since $\omega$ is
in $[H^1(\om)]^3$ and since
$\tr(\grad \omega) = \dive \omega = v \in Z^3_{r-2}(\Ta)$.  These
equations uniquely determine $\Pith 2 Z \omega \in Z_{r-1}^2(\mcta)$
due to the unisolvency of~\eqref{Z2dofs} proved in
Lemma~\ref{lem:Z2unisolvency}. An argument analogous to the one we
used to prove that $\dive \Pi_2^Z= \Pi_3^Z \dive$ in the proof of
Theorem~\ref{thm:PiZ} now yields
$\dive \Pith 2 Z \omega=\dive \omega=v$.
It is easy to prove that
$\grad: Z_{r+1}^0(\mcta) \rightarrow {\ker} (\curl, Z_{r}^1(\mcta))$
is onto (see proof of Theorem~\ref{exact-global-V}).

Finally, we perform a dimension count of the global degrees of freedom
to show that
$\curl: Z_{r}^1(\mcta)\rightarrow {\ker}(\div, Z_{r-1}^2(\mcta))$ is
onto. To this end, we note from \eqref{eqn:Sigmadofs}, \eqref{Z1dofs},
\eqref{Z2dofs}, and \eqref{Z3dofs} that the following dimension count
holds:
\begin{align*}
\dim
  (Z_{r+1}^{0}(\mcta))&=10\#_{0}+(3r-10)\#_{1}+(r^{2}-5r+7)\#_{2}+\frac
                      2 3 (r-3)(r-2)(r-1)\#_{3},\\
\dim (Z_{r}^{1}(\mcta))&=20\#_{0}+3(2r-7)\#_{1}+ \frac 5 2 (r-2)(r-3)\#_{2}\\
  \quad &+\left(\frac 2 3 (r-3)(r-2)(r-1)+
          \frac 1 3
          (r-3)(r-2)(4r-7)\right)\#_{3},\\
  \dim (Z_{r-1}^{2}(\mcta))&=12\#_{0}+3(r-4)\#_{1}+\frac 3 2
                           (r-2)(r-3)\#_{2}\\
  \quad &+\left( \frac 1 3 (r-3)(r-2)(4r-7)
          +\frac 2 3 (r+1)r(r-1)-13 \right)\#_{3},\\
\dim (Z_{r-2}^{3}(\mcta))&=\#_{0}+\left(\frac 2 3 (r+1)r(r-1)-12\right)\#_{3}.
\end{align*}
By the Euler formula, we have 
$$
\dim (Z_{r+1}^{0}(\mcta))-\dim (Z_{r}^{1}(\mcta))+\dim (Z_{r-1}^{2}(\mcta))-\dim (Z_{r-2}^{3}(\mcta))=1,
$$
\revjj{which shows that
$\dim\curl Z_{r}^1(\mcta) = \dim{\ker}(\div, Z_{r-1}^2(\mcta))$.}
\end{proof}

\subsection{The global $U$ complex}

The global elasticity complex consists of 
\begin{alignat*}{2}
U_{r+1}^0(\mcta) & =Z_{r+1}^0(\mcta), &
U_{r}^1(\mcta) &
=\left \{\sym (u) : u\in  Z_{r}^1(\mcta)\otimes {\V}\right \},\\
U_{r-2}^2(\mcta)&=  \{ \omega \in U_{r-2}^{2}(\mcta)\otimes {\V}: \skw
\omega=0 \}, \quad 
&U_{r-3}^3(\mcta)& =Z_{r-3}^3(\mcta).
\end{alignat*}
\revjj{To show that these spaces form an exact global complex, we
  follow the same procedure as for the local complex, starting with a}
global analogue of Theorem~\ref{thm:elasticitysequence}. Note that
like in the local case, the global space $V^1_{r-1}(\mcta) \otimes \V$
is in bijective correspondence with $Z_{r-1}^2(\mcta) \otimes \V$ via
$\Xi$. Also,
$\vskw: V^2_{r-2}(\mcta) \otimes \V \mapsto Z^3_{r-2}(\mcta \otimes
\V)$ is easily seen to be surjective.

\begin{theorem} \label{thm:elasticitysequence-g}
  \revjj{For $r \ge 5$,} the sequence \vspace{-0.3cm}  
  \[ 
    \begin{tikzcd}
      \!\!
      \begin{bmatrix}
        Z_{r+1}^0(\mcta) \!\otimes\! {\V} \\
        V_{r}^0(\mcta) \!\otimes\! {\V}
      \end{bmatrix}\!\!
      \arrow{r}{
        \left[\begin{smallmatrix}
            \!\grad\!, \; -\!\mskw \!
          \end{smallmatrix}\right]}
      &[3.2em]
      \!\! Z_{r}^1(\mcta)  \!\otimes\! {\V} \!\!
      \arrow{r}{\curl \Xi^{-1} \curl}
      &[2.6em] 
      \!\!V_{r-2}^2(\mcta) \!\otimes\! {\V}\!\!
      \arrow{r}{
        \begin{bmatrix}
          \!2\! \vskw\!\! \\ \dive 
        \end{bmatrix}
      }
      &[0.9em]
      \begin{bmatrix}
        Z_{r-2}^3(\mcta) \!\otimes\! {\V} \\
        V_{r-3}^3(\mcta) \!\otimes\! {\V}
      \end{bmatrix}
    \end{tikzcd} 
  \]
  is exact
  \revjj{and the kernel of the first operator above is isomorphic to
      $\RM$.
    When $r = 4$,  the last operator remains surjective.}
\end{theorem}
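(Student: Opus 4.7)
The plan is to follow the BGG template used in the proof of Theorem~\ref{thm:elasticitysequence}, substituting the local Alfeld-split exactness with the global results proved earlier in this section. Since the algebraic identities \eqref{alg1} and \eqref{alg2} are pointwise, the maps $-\mskw$, $\Xi$, and $2\vskw$ assemble into a commutative diagram over $\mcta$ entirely analogous to \eqref{2level-diagram}. Because the defining continuity conditions (global $C^0$ with $C^1$ at the vertices of $\mct$) on $V^1_{r-1}(\mcta)$ and $Z^2_{r-1}(\mcta)$ are identical, the local identity \eqref{z2=v1} lifts to $V^1_{r-1}(\mcta)=Z^2_{r-1}(\mcta)$, so $\Xi$ is a bijection between $V^1_{r-1}(\mcta)\otimes\V$ and $Z^2_{r-1}(\mcta)\otimes\V$.

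For $r\ge 5$, both the global $Z$- and $V$-sequences \eqref{globalZ} and \eqref{globalV} are exact by Theorems~\ref{exact-global-Z} and~\ref{exact-global-V}, so Proposition~\ref{prop:bgg_basic}(\ref{item:1:prop:bgg_basic}) directly yields exactness of the middle two terms of the asserted elasticity sequence. To obtain surjectivity of the last operator (for both $r\ge 5$ and $r=4$), I would invoke Proposition~\ref{prop:bgg_basic}(\ref{item:2:prop:bgg_basic}). The required hypotheses all hold: surjectivity of $\dive$ on the $Z$-row for $r\ge 4$ is part of Theorem~\ref{exact-global-Z}; surjectivity of $\dive$ on the $V$-row down to $r=4$ is the partial assertion of Theorem~\ref{exact-global-V}; the $V$-complex relation $\dive\circ\curl=0$ is trivial; and the commutativity $\dive\circ\Xi=2\vskw\circ\curl$ is~\eqref{alg1}.

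To identify the kernel of the first operator, I would replay the local argument: if $\grad u - \mskw v=0$, then $\veps(u)=0$, so on each $T\in\mct$, $u|_T=a_T+b_T\wedge x$ is an infinitesimal rigid body motion and $v|_T=-b_T$. The $C^0$ continuity of $u$ across shared faces (inherited from $Z^0_{r+1}(\mcta)\otimes\V$) together with the connectedness of $\Omega$ force $a_T$ and $b_T$ to coincide globally, yielding $u=a+b\wedge x\in\RM$ and $v=-b$ for some $a,b\in\V$. The map $(a,b)\mapsto(a+b\wedge x,-b)$ is a linear isomorphism from $\V\times\V$ onto this kernel, whose image has dimension $6=\dim\RM$.

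The only step needing mild care is verifying that Proposition~\ref{prop:bgg_basic}(\ref{item:2:prop:bgg_basic}) genuinely applies when $r=4$, since the bottom row is not fully exact in that case. However, inspecting the proof of that part shows that only the four listed conditions are used --- no exactness of the $V$-row at earlier positions, and no auxiliary global surjectivity analogue of Lemma~\ref{lem:skw-onto} is needed. Hence the BGG machinery delivers the theorem in a single application, without any additional potential construction on the global mesh.
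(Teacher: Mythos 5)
Your proposal is correct and follows essentially the same route as the paper: the $r\ge 5$ exactness is obtained from the global exactness of the $V$ and $Z$ sequences together with the global bijectivity of $\Xi$ via Proposition~\ref{prop:bgg_basic}~(\ref{item:1:prop:bgg_basic}), and the $r=4$ surjectivity of the last map via item~(\ref{item:2:prop:bgg_basic}), whose hypotheses you correctly verify require only the partial ($\dive$-surjectivity) assertion of Theorem~\ref{exact-global-V} and no further exactness of the bottom row. Your explicit identification of the kernel with $\RM$ spells out a detail the paper's proof leaves implicit (deferring to the rigid-body-motion argument given after Theorem~\ref{thm:elasticitysequence}), but it matches the intended argument.
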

\begin{proof}
  \revjj{The case $r\ge 5$ follows by the $r\ge 5$ case of
    Theorem~\ref{exact-global-V} and
    Proposition~\ref{prop:bgg_basic}'s
    item~(\ref{item:1:prop:bgg_basic}). The statement for $r=4$ follows
    from the surjectivity of the divergence asserted by
    Theorem~\ref{exact-global-V} in the $r=4$ case and
    Proposition~\ref{prop:bgg_basic}'s
    item~(\ref{item:2:prop:bgg_basic}).}
\end{proof}

\begin{theorem}
  \label{thm:charU-g}
  $U_r^1(\mcta) =\{ u\in H^{1}(\Omega; \mathbb{S}): $
      $ (\curl u)' \in W_{r-1}^1(\mcta) \otimes {\V},$
      $u$ is $C^{1}$ at the mesh vertices of $\Th$, 
      $\inc u$ is $C^{0}$ at the mesh vertices of $\Th$, and
      $u|_T\in U_r^1(\Ta)$ for all mesh elements $T \in \Th \}.$
\end{theorem}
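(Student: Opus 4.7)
The plan is to prove the two inclusions, paralleling the strategy used for the local characterization (Theorem~\ref{thm:charU}). Denote the right-hand side by $N_r^1(\mcta)$.

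For the inclusion $U_r^1(\mcta) \subseteq N_r^1(\mcta)$, let $u = \sym z$ with $z \in Z_r^1(\mcta)\otimes \V$. Since $z$ is continuous piecewise polynomial, $u \in H^1(\Omega;\mathbb{S})$, and $u|_T \in U_r^1(\Ta)$ holds by construction. Writing $z = u + \mskw \vskw z$ and invoking identity~\eqref{alg2} yields
\[
(\curl u)' = \Xi^{-1}\curl u = \Xi^{-1}\curl z + \grad \vskw z,
\]
and both summands lie in $W_{r-1}^1(\mcta)\otimes \V$: the first because $\curl z \in Z_{r-1}^2(\mcta)\otimes \V$ is $C^0$-conforming piecewise polynomial, and the second because $\vskw z$ is continuous piecewise polynomial whose gradient has matching tangential traces. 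The $C^1$-regularity of $u$ at mesh vertices is inherited from that of $z$, and $\inc u = \curl \Xi^{-1}\curl z$ is $C^0$ at mesh vertices because $\curl z$ is $C^1$ there.

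For the reverse inclusion, given $u \in N_r^1(\mcta)$, I would first verify that $\sigma := \inc u$ lies in $V_{r-2}^2(\mcta)\otimes \V$. Its normal continuity across interior faces follows from $\div\sigma = \div\inc u = 0$, applied distributionally to the $H(\curl)$-conforming field $(\curl u)' \in W_{r-1}^1(\mcta)\otimes \V$ (so that the piecewise curl agrees with the distributional curl), while $C^0$-regularity at vertices is the last of the assumed properties of $u$. Identity~\eqref{713} together with $\div\circ\inc = 0$ give $\vskw \sigma = 0$ and $\div\sigma = 0$, so $\sigma$ sits in the kernel of the last map in Theorem~\ref{thm:elasticitysequence-g}. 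The exactness asserted there produces $z \in Z_r^1(\mcta)\otimes\V$ with $\curl \Xi^{-1}\curl z = \sigma$. Setting $q = \Xi^{-1}\curl(u - z)$, the hypothesis $(\curl u)' \in W_{r-1}^1(\mcta)\otimes\V$ and the $C^0$-conformity of $\curl z$ place $q$ in $W_{r-1}^1(\mcta)\otimes \V$; moreover $\curl q = 0$, so exactness of the standard global de~Rham complex on $\mcta$ yields $v \in W_r^0(\mcta)\otimes\V$ with $\grad v = q$. Since $u$ is $C^1$ at mesh vertices and $\curl z \in Z_{r-1}^2(\mcta)\otimes\V$ is $C^1$ at the vertices, $q$ is $C^0$ at the vertices, whence $v$ is $C^1$ at the vertices. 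Finally, put $\theta = u + \mskw v$; identity~\eqref{alg2} gives $\curl\theta = \curl u - \Xi\grad v = \curl z$, so $\theta$ is continuous globally, $C^1$ at mesh vertices, with $\curl\theta \in Z_{r-1}^2(\mcta)\otimes\V$. Consequently $\theta \in Z_r^1(\mcta)\otimes\V$, and since $\sym\theta = u$, we conclude $u \in U_r^1(\mcta)$.

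The principal obstacle is tracking the propagation of vertex regularity through the chain of auxiliary fields $\sigma \mapsto z \mapsto q \mapsto v \mapsto \theta$: one must ensure that the assumed smoothness of $u$ and $\inc u$ at mesh vertices passes correctly into $C^0$ at vertices for $q$, into $C^1$ at vertices for $v$, and finally into the vertex regularity of $\theta$ and of $\curl\theta$ required for membership in $Z_r^1(\mcta)\otimes\V$. Each step chains a single first-order differentiation with the extra vertex smoothness already encoded in the preceding space, so no new analytic input beyond the already-established exactness statements (Theorem~\ref{thm:elasticitysequence-g} and the $W$-de~Rham exactness) is needed.
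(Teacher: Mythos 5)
Your proposal is correct and follows exactly the route the paper intends: the paper's own proof is a one-line remark that the result ``can be proved along the lines of the proof of Theorem~\ref{thm:charU} using Theorem~\ref{thm:elasticitysequence-g},'' and you have simply carried out that adaptation in full, transporting the local argument ($\sigma\mapsto z\mapsto q\mapsto v\mapsto\theta$) to the global spaces with the global exactness results in place of the local ones. The vertex-regularity bookkeeping you flag as the main obstacle is handled the same way as in the local proof, so there is nothing genuinely different here.
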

\begin{proof}
  This can be proved along the lines of the proof of
  Theorem~\ref{thm:charU-g}
  using Theorem~\ref{thm:elasticitysequence-g}.
\end{proof}

\begin{theorem}
  The following sequence of global finite element spaces 
  \begin{equation}\label{strong-symmetry}
    \begin{tikzcd}[column sep=small]
      0\arrow{r}{}
      &
      \RM \arrow{r}{\subset}
      &[0.4em]
      U_{r+1}^0(\mcta)\arrow{r}{{\varepsilon} }
      &[0.6em] U_{r}^1(\mcta) \arrow{r}{\inc}
      &[1em] U_{r-2}^2(\mcta) \arrow{r}{\dive}
      &[0.7em] U_{r-3}^3(\mcta)\arrow{r}{} 
      & 0 .
    \end{tikzcd} 
  \end{equation}
  is a complex and is exact (on
  contractible domains) {\color{black} for $r\geq 4$}.
\end{theorem}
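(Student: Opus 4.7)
The plan is to mirror the local proof of Theorem~\ref{thm:strong-symmetry-loc}, replacing each invocation of the local derived exact sequence (Theorem~\ref{thm:elasticitysequence}) by its global counterpart Theorem~\ref{thm:elasticitysequence-g}, and each use of Theorem~\ref{thm:charU} by Theorem~\ref{thm:charU-g}. The algebraic identities~\eqref{613} and~\eqref{713} are pointwise, so they transfer verbatim to the global setting.

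First I would verify that~\eqref{strong-symmetry} is a complex. The compositions $\inc\circ\veps=\curl\Xi^{-1}\curl\circ\grad=0$ (by~\eqref{613}) and $\dive\circ\inc=0$ are immediate. To check that each operator lands in the asserted space, note that for $u\in U_{r+1}^0(\mcta)$, $\veps(u)=\sym(\grad u)$ with $\grad u\in Z_{r}^1(\mcta)\otimes\V$, so Theorem~\ref{thm:charU-g} places $\veps(u)\in U_r^1(\mcta)$. For $u\in U_r^1(\mcta)$, identity~\eqref{713} yields $\vskw(\inc u)=0$, while the local result Theorem~\ref{thm:strong-symmetry-loc} applied element-by-element gives membership in $V_{r-2}^2(\mcta)\otimes\V$, and hence in $U_{r-2}^2(\mcta)$.

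For exactness at $U_r^1(\mcta)$ and $U_{r-2}^2(\mcta)$, the local arguments carry over word for word. If $u\in U_r^1(\mcta)$ satisfies $\inc u=0$, write $u=\sym z$ for some $z\in Z_r^1(\mcta)\otimes\V$; then $\curl\Xi^{-1}\curl z=\inc u=0$, and Theorem~\ref{thm:elasticitysequence-g} furnishes $w\in U_{r+1}^0(\mcta)$ and $s\in V_r^0(\mcta)\otimes\V$ with $z=\grad w-\mskw s$, whence $u=\sym z=\veps(w)$. Similarly, if $\sigma\in U_{r-2}^2(\mcta)$ satisfies $\dive\sigma=0$, then $2\vskw\sigma=0$ by symmetry, so $(\sigma)$ lies in the kernel of the final map of Theorem~\ref{thm:elasticitysequence-g}; hence $\sigma=\curl\Xi^{-1}\curl z$ for some $z\in Z_r^1(\mcta)\otimes\V$, and by~\eqref{613} we may replace $z$ by $\sym z\in U_r^1(\mcta)$.

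The remaining two endpoints are more delicate than their local counterparts. For exactness at $U_{r+1}^0(\mcta)$, an element $u\in U_{r+1}^0(\mcta)\subseteq[C^0(\Omega)]^3$ with $\veps(u)=0$ is a piecewise infinitesimal rigid motion; the global $C^0$-continuity across mesh faces and the contractibility of $\Omega$ let these local motions glue into a single element of $\RM$, as in the paragraph following Theorem~\ref{thm:elasticitysequence}. The main obstacle is the surjectivity of $\dive$ at the borderline case $r=4$: given $u\in U_{r-3}^3(\mcta)$, we wish to produce a \emph{symmetric} $\sigma\in V_{r-2}^2(\mcta)\otimes\V$ with $\dive\sigma=u$. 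I would apply the surjectivity clause of Theorem~\ref{thm:elasticitysequence-g} to the datum $(0,u)$, obtaining $\sigma$ with $2\vskw\sigma=0$ and $\dive\sigma=u$, so that $\sigma\in U_{r-2}^2(\mcta)$. The deep work underlying this step is already absorbed into Theorem~\ref{exact-global-V}, where a Stenberg-style rearrangement of face dofs was used to establish that $\dive:V_2^2(\mcta)\to V_1^3(\mcta)$ remains surjective despite the vertex continuity built into $V_{r-2}^2(\mcta)$; without that ingredient the elasticity complex would only be exact for $r\ge 5$.
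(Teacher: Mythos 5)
Your argument works for $r\ge 5$ and follows the paper's route for that range, but it has a genuine gap at the borderline case $r=4$, which is precisely where the paper has to work hardest. You correctly identify that something special happens at $r=4$, but you locate the difficulty in the wrong place: you treat the surjectivity of $\dive$ as the sole obstacle and assume the rest of the local argument ``carries over word for word.'' It does not. Your proofs of exactness at $U_r^1(\mcta)$ and at $U_{r-2}^2(\mcta)$ both invoke the \emph{full exactness} of the derived sequence in Theorem~\ref{thm:elasticitysequence-g} (to write $z=\grad w-\mskw s$, and to write $\sigma=\curl\Xi^{-1}\curl z$, respectively). But Theorem~\ref{thm:elasticitysequence-g} asserts exactness only for $r\ge 5$; for $r=4$ it asserts nothing beyond surjectivity of the last operator, because the global $V$-sequence exactness (Theorem~\ref{exact-global-V}) is itself only available for $r\ge 5$ --- the paper explicitly notes that no local dofs for gluing $V_4^0(\Ta)$ are known, so only the divergence part of that sequence survives at $r=4$. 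Consequently the BGG machinery of Proposition~\ref{prop:bgg_basic}, item~(\ref{item:1:prop:bgg_basic}), cannot be applied at $r=4$, and your two middle exactness claims are unsupported there. (The $\dive$-surjectivity at $r=4$, which you flag as ``the main obstacle,'' is in fact the part that goes through cleanly, via item~(\ref{item:2:prop:bgg_basic}) of Proposition~\ref{prop:bgg_basic} and the Stenberg-style argument, exactly as you describe.)

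The paper closes the $r=4$ gap with two different devices that are absent from your proposal. For exactness at $U_r^1(\mcta)$ it descends to the continuous level: a $u\in U_4^1(\mcta)$ with $\inc u=0$ equals $\veps(v)$ for some $v\in H^2(\Omega)$ by the exactness of the Sobolev complex~\eqref{sobolev-elasticity}; then the \emph{local} exactness of Theorem~\ref{thm:strong-symmetry-loc} on each $T$ forces $v|_T$ to be a polynomial ($w_T$ plus a rigid motion), so $v\in H^2(\Omega)\cap\pol_5(\mcta)\subseteq U_5^0(\mcta)$. For exactness at $U_{r-2}^2(\mcta)$ it abandons the potential construction entirely and instead performs a global dimension count of the four $U$-spaces using the Euler formula, showing $\dim\inc\,U_r^1(\mcta)=\dim\ker(\div,U_{r-2}^2(\mcta))$. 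To repair your proof you would either need to restrict the statement to $r\ge 5$ or supply arguments of this kind for $r=4$.
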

\begin{proof}
For $r\geq 5$, the proof is along the lines of the proof of
  Theorem~\ref{thm:strong-symmetry-loc} using
  Theorem~\ref{thm:elasticitysequence-g}.
  
  \revjj{For $r=4$, first note that the surjectivity of
    $\div: U_{r-2}^2(\mcta) \to U_{r-3}^3(\mcta)$ follows from Theorem
    \ref{thm:elasticitysequence-g}.}  \revjj{Next, we show that
    $\veps( U_{r+1}^0(\mcta)) = \ker(\inc, U_r^1(\mcta))$ for $r=4$.
    Any $u \in U_r^1(\mcta)$ with $\inc u = 0$ may be written as
    $\veps(v)$ for some $v\in H^2(\om)$ by the exactness
    of~\eqref{sobolev-elasticity}. Now, on each mesh element
    $T \in \Th$, split into an Alfeld split $\Ta$, the local exactness
    result of Theorem~\ref{thm:strong-symmetry-loc}, applied with
    $r=4$, shows that there is a $w_T \in U_{r+1}^0(\Ta)$ satisfying
    $\veps(w_T) = u|_T$. In other words, $\veps( w_T - u)|_T = 0$,
    which implies that on each $T$, the function $v$ must equal a
    polynomial of the form $v|_T = w_T + r_T$ for some
    $r_T \in \RM(T) \subset [\pol_1(T)]^3$. Thus $u = \veps(v)$ and
    $v \in H^2(\Omega) \cap \pol_5(\mcta) \subseteq U_5^0(\mcta)$.  To
    complete the proof of exactness, we now only need to show that
    $\curl: U_{r}^1(\mcta)\rightarrow {\ker}(\div, U_{r-2}^2(\mcta))$
    is onto.}
  To this end, we note from \eqref{eqn:Sigmadofs},
    \eqref{eqn:revU1dofs}, \eqref{eqn:U2dofs}, and \eqref{eq:U3dofs}
    that the following dimension count holds:
\begin{align*}
\dim
  (U_{r+1}^{0}(\mcta))&=10\#_{0}+(3r-10)\#_{1}+(r^{2}-5r+7)\#_{2}+N_{0}\#_{3},\\
\dim (U_{r}^{1}(\mcta))&=30\#_{0}+3(3r-8)\#_{1}+ 3/2 (3 r^2 - 11 r + 4)\#_{2}+N_{1}\#_{3},\\
  \dim (U_{r-2}^{2}(\mcta))&=6\#_{0}+3/2 (r - 3) (r + 2)\#_{2}+N_{2}\#_{3},\\
\dim (U_{r-3}^{3}(\mcta))&=N_{3}\#_{3}.
\end{align*}
By the definition of interior degrees of freedom, we have $N_{0}+N_{2}=N_{1}+N_{3}-6$. 
By the Euler formula, we have 
$$
\dim (U_{r+1}^{0}(\mcta))-\dim (U_{r}^{1}(\mcta))+\dim (U_{r-2}^{2}(\mcta))-\dim (U_{r-3}^{3}(\mcta))=6,
$$
which \revjj{shows that $\dim \curl U_{r}^1(\mcta) = \dim {\ker}(\div, U_{r-2}^2(\mcta))$.}
\end{proof}

\appendix

\section{Supersmoothness}     \label{sec:supersmoothness}

Consider a tetrahedron $T$ and its Alfeld split $\{ T_i\}$ as in the
rest of the paper.  Proposition~\ref{prop:extraC}'s
items~(\ref{item:0_prop:extraC}) and (\ref{item:1_prop:extraC}) are a
consequence of the following fact proved in~\cite{Alfel84}: if
$v \in C^1(T)$ and $v|_{T_i}$ is in $C^\infty(T_i)$, then $v$ is $C^2$
at the vertices of $T$. Such serendipitous ``supersmoothness'' at some
points was observed on triangles earlier~\cite{Farin80}. In
Theorem~\ref{thm:supersmooth} below, we establish a
supersmoothness result in the same spirit for $1$-forms. In fact, the
earlier result of Alfeld follows from the theorem, as noted in
Corollary~\ref{cor:supersmoothness}.
Items~(\ref{item:2_prop:extraC}) and~(\ref{item:3_prop:extraC}) of
Proposition~\ref{prop:extraC} follow from the arguments below. (The
proof will show that the 
assumption that $v|_{T_i}$ is infinitely smooth can be
relaxed, but this generalization is not important for our purposes.)

\begin{theorem}
  \label{thm:supersmooth}
  Suppose $v$ is in $C^0(T)^3$, $v_i = v|_{T_i} \in C^\infty(T_i)$,
  and $\curl v$ is $C^0$ at the vertices $x_i$ of $T$. Then $v$ is
  $C^1$ at $x_i$.
\end{theorem}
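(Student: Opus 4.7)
The plan is to fix a vertex $x_i$ of $T$ and argue locally. Since $x_i$ is not a vertex of $T_i$, exactly three subtetrahedra of $\Ta$ meet at $x_i$; call them $T_a, T_b, T_c$. For each $s\in\{a,b,c\}$, let $G_s := \nabla v_s(x_i)\in\R^{3\times 3}$ denote the one-sided Jacobian from within $T_s$. My goal is to prove $G_a = G_b = G_c$.

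First I would extract what can be said using only the $C^0$ continuity of $v$. Each pair $T_s, T_t$ shares a face $F_{st}$ with unit normal $n_{st}$, and continuity forces the tangential derivatives of each component of $v$ to agree on either side of $F_{st}$. Applied row by row this says exactly that every row of $G_s - G_t$ is a scalar multiple of $n_{st}^T$, so
\begin{equation*}
  G_s - G_t \;=\; \mathbf{c}_{st}\, n_{st}^T
\end{equation*}
for some $\mathbf{c}_{st} \in \R^3$. Next I would feed in the curl hypothesis: continuity of $\curl v$ at $x_i$ is equivalent to the equality of the skew parts of $G_a, G_b, G_c$, so each $G_s - G_t$ must be symmetric. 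Combined with the rank-one form just obtained, the identity $\mathbf{c}\,n^T = n\,\mathbf{c}^T$ forces $\mathbf{c}_{st}\parallel n_{st}$, yielding
\begin{equation*}
  G_s - G_t \;=\; \alpha_{st}\, n_{st}\, n_{st}^T, \qquad \alpha_{st}\in\R.
\end{equation*}

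The final step closes the loop. Summing $(G_a{-}G_b)+(G_b{-}G_c)+(G_c{-}G_a) = 0$ gives
\begin{equation*}
  \alpha_{ab}\,n_{ab}n_{ab}^T + \alpha_{bc}\,n_{bc}n_{bc}^T + \alpha_{ca}\,n_{ca}n_{ca}^T \;=\; 0.
\end{equation*}
The three faces $F_{ab}, F_{bc}, F_{ca}$ all contain the edge $[x_i, z]$, hence their normals lie in the two-dimensional plane $\Pi$ orthogonal to $x_i - z$; since these are three distinct planes through a common line, the three normals are pairwise non-parallel directions in $\Pi$. The main (and essentially only) point requiring verification is the short linear algebra lemma that three pairwise non-parallel vectors in $\R^2$ produce three linearly independent rank-one symmetric matrices $n\,n^T$ in the three-dimensional space of symmetric $2\times 2$ matrices. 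I would prove it by pairing the dependence relation against a vector orthogonal to $n_{ab}$, killing that term, and then using the linear independence of $n_{bc}, n_{ca}$ to force $\alpha_{bc} = \alpha_{ca} = 0$; a symmetric argument gives $\alpha_{ab}=0$. Then $G_a = G_b = G_c$, and $v$ is $C^1$ at $x_i$.

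The argument is fundamentally geometric. The hard part will be keeping the bookkeeping of "which subtetrahedron meets which along which face" straight; the supporting analytic and algebraic ingredients (tangential agreement, rank-one factorization, the symmetric-rank-one lemma in the plane) are each elementary one-liners.
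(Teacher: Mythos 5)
Your proposal is correct, and it reaches the conclusion by a route that differs from the paper's in its final step. The two key inputs are identical: $C^0$ continuity of $v$ across an interior face $F_{st}$ forces $(\nabla v_s-\nabla v_t)\tau=0$ for tangential $\tau$, hence a rank-one jump $c_{st}n_{st}^T$ (this is the paper's identity $c\cdot(\grad v_i)\tau=c\cdot(\grad v_j)\tau$), and $C^0$ continuity of $\curl v$ at the vertex makes that jump symmetric, hence of the form $\alpha_{st}n_{st}n_{st}^T$ (the paper packages the same information as $\tau\cdot(\grad v_i)c=\tau\cdot(\grad v_j)c$, obtained by splitting $\grad v$ into symmetric and skew parts). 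Where you genuinely diverge is the endgame: the paper fixes $x_1$, chooses explicit tangents $\tau_0=(x_1-z)/\|x_1-z\|$, $\tau_{10}$, $\tau_{12}$, $\tau_{13}$, and chases the one remaining component $\tau_{13}\cdot(\grad v_2)\tau_{13}$ through the intermediate subtetrahedron $T_0$; you instead note that the three interior faces meeting at $x_i$ all contain the edge $[x_i,z]$, so their normals are pairwise non-parallel vectors in the plane $\Pi$ orthogonal to $x_i-z$, and the telescoping identity $\alpha_{ab}n_{ab}n_{ab}^T+\alpha_{bc}n_{bc}n_{bc}^T+\alpha_{ca}n_{ca}n_{ca}^T=0$ together with the linear independence of three such rank-one symmetric matrices kills all three coefficients at once. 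Your closing is cleaner and makes the geometric mechanism (the common edge $[x_i,z]$) explicit, at the cost of the small linear-algebra lemma, whose statement and proof sketch are sound. The only detail to make explicit in a write-up is that the test vector $w\perp n_{ab}$ must be taken \emph{within} $\Pi$ (a general $w\in\mathbb{R}^3$ orthogonal to $n_{ab}$ could be parallel to $x_i-z$ and annihilate all three normals); your framing of the lemma in $\mathbb{R}^2$ already does this.
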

\begin{proof}
  Let $F_{ij} = \partial T_i \cap \partial T_j$ and let $T F_{ij}$
  denote the tangent plane of $F_{ij}$.  Let $c \in {\V}$ and
  $\tau \in TF_{ij}$.  The first observation needed for this proof is
  that
  \begin{equation}
    \label{eq:3}
    c \cdot (\grad v_i) \tau = c \cdot (\grad v_j)  \tau
    \quad \text{ on }  F_{ij}.
  \end{equation}
  This is because the continuity of $v$ requires $(v _i - v_j)\cdot c$ to
  vanish on $F_{ij}$ for any $c \in {\V}$, so its tangential
  derivatives also vanish on $F_{ij}$. 

  We claim that at a vertex of $T$ on $F_{ij}$, we also have
  \begin{equation}
    \label{eq:4}
    \tau \cdot (\grad v_i) c = \tau \cdot (\grad v_j) c.
  \end{equation}
  To show this, consider $x_1$, a common vertex of $T$ and
  $F_{23}$. Then, since the scalar $\tau \cdot (\grad v_i)(x_1) \, c$
  equals its transpose, we have
  \begin{align*}
    \tau \cdot (\grad v_2)\, c
    & = c \cdot (\grad v_2)' \tau && \text{ at } x_1,
    \\
    & = c \cdot \left((\grad v_2)' - (\grad v_2) \right) \tau
      + c \cdot (\grad v_2)  \tau
    \\
    & = c \cdot \left((\grad v_2)' - (\grad v_2) \right) \tau
      + c \cdot (\grad v_3)  \tau && \text{ by~\eqref{eq:3}},
    \\
    & = c \cdot \left((\grad v_3)' - (\grad v_3) \right) \tau
      + c \cdot (\grad v_3)  \tau && \text{ as $\curl v$ is
                                     $C^0$ at $x_1$}
    \\
    & =  \tau  \cdot (\grad v_3)\, c.
  \end{align*}
  This argument can be repeated at other vertices to finish the
  proof of~\eqref{eq:4}.

  Now we are ready to show that $v$ is $C^1$ at $x_i$. Let $\tau_i =
  (x_i - z) / \| x_i - z\|$ and $\tau_{ij} = (x_i - x_j) / \| x_i -
  x_j \|$. Without loss of generality, we focus on one vertex,
  say $x_1$. At $x_1$,
  \begin{subequations}
    \label{eq:6}
    \begin{align}
      c \cdot (\grad v_2) \tau_0
      & =     c \cdot (\grad v_3) \tau_0
      & \tau_0 (\grad v_2) c & = \tau_0 (\grad v_3) c
      \\
      c \cdot (\grad v_2) \tau_{10}
      & = c \cdot (\grad v_3) \tau_{10}
      & \tau_{10} (\grad v_2) c & = \tau_{10} (\grad v_3) c.
    \end{align}
  \end{subequations}
  The left equalities follow from~\eqref{eq:3} and the right ones
  from~\eqref{eq:4}. Furthermore, at $x_1$ we have 
  \begin{align*}
    \tau_{12} \cdot (\grad v_2 ) \tau_{13}
    & = \tau_{12} \cdot (\grad v_0) \tau_{13}
    && \text{by~\eqref{eq:3} applied to $F_{20}$}       
    \\
    & = \tau_{12} \cdot (\grad v_3) \tau_{13}
    && \text{by~\eqref{eq:4} applied to $F_{03}$}.
  \end{align*}
  Therefore, $\tau_{12} \cdot (\grad v_2 )\tau_{13} =
  \tau_{12} \cdot (\grad v_3 )\tau_{13}$ at $x_1$.
  Writing $\tau_{12}$ as a linear combination of
  $\tau_0, \tau_{10}$ and $\tau_{13}$, and using the equalities
  of~\eqref{eq:6} in the right panel, we conclude that 
  \begin{equation}
    \label{eq:7}
    \tau_{13} \cdot (\grad v_2) \tau_{13}
    = \tau_{13} \cdot (\grad
    v_3) \tau_{13} \qquad \text{ at } x_1.
  \end{equation}
  The identities of~\eqref{eq:7} and~\eqref{eq:6} together yield the
  equality of $\grad v_2$ and $\grad v_3$ at $x_1$. Repeating this
  argument for every pair of $v_i$ meeting at a vertex, the proof is
  finished.
\end{proof}

\begin{corollary}
  \label{cor:supersmoothness}
  If $w \in C^1(T)$ and $w|_{T_i}$ is in $C^\infty(T_i)$, then $w$ is
  $C^2$ at the vertices of $T$.
\end{corollary}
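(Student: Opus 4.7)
The plan is to deduce the corollary directly from Theorem~\ref{thm:supersmooth} by applying it to $v := \grad w$. First I would verify the three hypotheses of the theorem for this $v$: (i) since $w \in C^1(T)$, the gradient $v = \grad w$ is continuous on $T$, so $v \in C^0(T)^3$; (ii) since $w|_{T_i} \in C^\infty(T_i)$, the piecewise gradient $v|_{T_i} = \grad(w|_{T_i})$ is in $C^\infty(T_i)$; (iii) finally, $\curl v = \curl\grad w = 0$ on each $T_i$, and the zero field is trivially $C^0$ at the vertices of $T$.

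With all hypotheses verified, Theorem~\ref{thm:supersmooth} gives that $v = \grad w$ is $C^1$ at each vertex $x_i$ of $T$, meaning that the Jacobian $\grad v$, which is the Hessian of $w$, is single-valued at $x_i$. Combined with the existing $C^1$ continuity of $w$ across $T$, this is precisely the statement that $w$ is $C^2$ at the vertices of $T$, completing the proof.

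I do not anticipate any real obstacle here; the corollary is essentially a re-packaging of the theorem once one recognizes that asking $w$ to be $C^2$ at a vertex is exactly asking its gradient to be $C^1$ there, and that $\grad w$ automatically has vanishing (hence $C^0$) curl. The entire argument is a verification that the hypotheses of the theorem transfer from $w$ to $v = \grad w$.
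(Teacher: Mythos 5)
Your proof is correct and is exactly the argument the paper gives: apply Theorem~\ref{thm:supersmooth} to $v=\grad w$, noting that $\curl\grad w=0$ so the curl-continuity hypothesis is automatic. The additional verification of the hypotheses that you spell out is a faithful (and slightly more detailed) rendering of the paper's one-line proof.
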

\begin{proof}
  This follows by applying Theorem~\ref{thm:supersmooth} with
  $v = \grad w$.
\end{proof}

\bibliographystyle{abbrv}
\bibliography{references}

\end{document}